\newtheorem{theorem}{Theorem}[section]
\newtheorem{lemma}{Lemma}[section]
\newtheorem{proposition}{Proposition}[section]
\newtheorem{corollary}{Corollary}
\newtheorem{remark}{Remark}[section]
\renewcommand{\theequation}{\arabic{section}.\arabic{equation}}
\newcommand{\R}{\mathbb{R}}
\newcommand{\ds}{\displaystyle}
\title[Symmetry results in the half space] {
Symmetry results in the half space for a semi-linear fractional Laplace equation\\ through a one-dimensional analysis
%orbital characterization
}
\author[B. Barrios]{B. Barrios}
	\address{B. Barrios \hfill\break\indent
		Departamento de An\'{a}lisis Matem\'{a}tico,
		Universidad de La Laguna\hfill 
		\break \indent C/. Astrof\'{\i}sico Francisco S\'{a}nchez s/n, 
		38200 -- La Laguna, SPAIN}
	\email{bbarrios@ull.es}
\author[L. Del Pezzo]{L. Del Pezzo}
	\address{L. Del Pezzo \hfill\break\indent
		CONICET  \hfill\break\indent
		Departamento de Matem\'atica y Estad\'istica,
		Universidad Torcuato Di Tella
		\hfill\break\indent Av. Figueroa Alcorta 7350 (C1428BCW)
		\hfill\break\indent Buenos Aires, ARGENTINA. }
	\email{ldelpezzo@utdt.edu}
	\urladdr{http://cms.dm.uba.ar/Members/ldpezzo/}
\author[J. Garc\'{\i}a-Meli\'{a}n]{J. Garc\'{\i}a-Meli\'{a}n}
 	\address{J. Garc\'{\i}a-Meli\'{a}n \hfill\break\indent
		Departamento de An\'{a}lisis Matem\'{a}tico, 
		Universidad de La Laguna
		\hfill \break \indent C/. Astrof\'{\i}sico 
		Francisco S\'{a}nchez s/n, 38200 -- La Laguna, SPAIN
		\hfill\break\indent
		{\rm and} \hfill\break
		\indent 
		Instituto Universitario de Estudios Avanzados (IUdEA) 
		\hfill\break\indent en F\'{\i}sica At\'omica,
		Molecular y Fot\'onica, 
		\hfill\break\indent Universidad de La Laguna
		\hfill\break\indent C/. Astrof\'{\i}sico Francisco
		S\'{a}nchez s/n, 38200 -- La Laguna, SPAIN.}
	\email{jjgarmel@ull.es}
\author[A. Quaas]{A. Quaas} 
	\address{A. Quaas\hfill\break\indent
		Departamento de Matem\'{a}tica, \hfill\break\indent
		Universidad T\'ecnica Federico Santa Mar\'{\i}a
		\hfill\break\indent  Casilla V-110, Avda. Espa\~na, 
		1680 -- Valpara\'{\i}so, CHILE.}
	\email{{\tt alexander.quaas@usm.cl}}
\begin{document}

\maketitle

\begin{abstract}  In this paper we analyze the semi-linear fractional Laplace equation
$$(-\Delta)^s u = f(u) \quad\text{ in } \mathbb{R}^N_+,\quad u=0 \quad\text{ in } \mathbb{R}^N\setminus \mathbb{R}^N_+,$$
where $\R^N_+=\{x=(x',x_N)\in \R^N:\ x_N>0\}$ stands for the half-space and $f$ is a locally 
Lipschitz nonlinearity. We completely characterize one-dimensional bounded solutions of this problem, and we prove 
among other things that if $u$ is a bounded solution with $\rho:=\sup_{\mathbb{R}^N}u$ verifying $f(\rho)=0$, then $u$ is 
necessarily one-dimensional. 
\end{abstract}

\section{Introduction}

In this paper we study existence and qualitative properties of positive, bounded solutions of the semi-linear 
nonlocal equation:
\begin{equation}\label{eq:Problema}\tag{$P_N$}
		\begin{cases}
			(-\Delta)^s u = f(u) &\text{ in } \mathbb{R}^N_+,\\
			\ \ u=0 &\text{ in } \mathbb{R}^N\setminus \mathbb{R}^N_+,
		\end{cases}
	\end{equation}
where $\mathbb{R}^N_+=\{x=(x',x_N)\in \mathbb{R}^N:\ x_N>0\}$ is the half-space and $f$ is a locally Lipschitz 
function. Here $(-\Delta)^s$ denotes the \emph{fractional laplacian}, which
is defined on smooth functions as
\begin{equation}\label{operador}
(-\Delta)^s u(x) = c(N,s) \int_{ \mathbb{R}^N} \frac{u(x)-u(y)}{|x-y|^{N+2s}} dy,
\end{equation}
where $c(N,s)$ is a normalization constant given by
\begin{equation}\label{eq-const-norm}
c(N,s)= 4^s s(1-s) \pi^{-\frac{N}{2}} \frac{\Gamma\left( s+\frac{N}{2}\right)}{\Gamma(2-s)}
\end{equation}
(cf. Lemma 5.1 in \cite{MR2754080}). The integral in \eqref{operador} 
has to be understood in the principal value sense.

Before stating our results, let us briefly discuss the known achievements for the local case $s=1$, which motivates our study. 
The more relevant references in the subject are a series of papers by Berestycki, Caffarelli and Nirenberg, 
\cite{MR1260436, MR1395408, MR1655510, MR1470317}, where qualitative properties of solutions of 
\begin{equation}\label{problema-local}
\left\{
\begin{array}{ll}
-\Delta u = f(u) & \hbox{in }\mathbb{R}^N_+,\\
\ \ u=0 & \hbox{on } \partial \mathbb{R}^N_+,
\end{array}
\right.
\end{equation}
were obtained. The two main properties analyzed there are the monotonicity of solutions of \eqref{problema-local} and 
their one-dimensional symmetry (sometimes called rigidity). In some of these papers, some more general unbounded 
domains were also considered.

With regard to monotonicity properties in the case $s=1$, the first known result in the half-space seems to be 
due to Dancer in \cite{MR1190345}, although monotonicity in some coercive epigraphs was shown before in \cite{MR688279}. 
The more general case where $f$ is a Lipschitz function and $f(0)\geq 0$ is solved in \cite{MR1395408, MR1655510}. It is shown 
there that all positive solutions $u$ of \eqref{problema-local}, not necessarily bounded, are monotone in the $x_N$ 
direction. The case $f(0)<0$ is more delicate, and nowadays still not completely solved. 
See \cite{MR3593525} for several achievements in $N=2$, and \cite{MR3550849} for some partial results in higher dimensions. 
The main reason is the existence of a one-dimensional, periodic solution of \eqref{problema-local} which is not strictly positive.

As for the symmetry of solutions of \eqref{problema-local}, it is only conjectured that all bounded solutions 
are necessarily one-dimensional; see \cite{MR1655510}. This conjecture was shown to be true when $N=2$ or when $N=3$ 
and $f(0)\ge 0$ in \cite{MR1655510}. In higher dimensions the only general result in this direction at the 
best of our knowledge is the one in \cite{MR1260436}, where it is proved that if $\rho:=\sup u$ verifies 
$f(\rho)\leq 0$, then $u$ is symmetric and one additionally has $f(\rho)=0$. 
A slightly more restrictive version of this result had been previously proved by Angenent in 
\cite{MR794096} and Cl{\'e}ment and Sweers in \cite{MR937538}.

\medskip  

Back to our nonlocal problem \eqref{eq:Problema}, the question of monotonicity for positive, bounded solutions 
has been addressed before in some works. We mention preliminary results obtained in \cite{MR3454619} and 
\cite{MR3311908} for special nonlinearities, and a fairly general recent result by the authors in \cite{MR3624935}, 
where it is shown that nonnegative bounded solutions of \eqref{eq:Problema} are increasing in the 
$x_N$ direction even in the more delicate case $f(0)<0$. The only additional requirement is that 
$f$ needs to be $C^1$. Let us also mention the paper \cite{2016arXiv160407755S}, where some monotonicity 
properties are obtained for some more general unbounded domains and some special nonlinearities.

Nevertheless, the question of symmetry for positive, bounded solutions of \eqref{eq:Problema} is 
far from being completely analyzed. We are only aware of Corollary 1.2 in \cite{2016arXiv160407755S}, 
where some special nonlinearities are dealt with.

\bigskip

Next we describe our main results. First, let us comment that with the exception of Section 2, 
we will be mainly dealing with classical solutions of \eqref{eq:Problema}. However, it can be seen with the use of 
the regularity theory developed in \cite{MR2270163,MR2494809,MR2781586} and bootstrapping arguments 
that bounded, viscosity solutions of (1.1) in the sense introduced in \cite{MR2494809} are automatically 
classical. See the Appendix for a definition of viscosity solution.

We begin by considering the one-dimensional version of problem 
\eqref{eq:Problema}, that is 
\begin{equation}\label{eq:Problema-unidim}\tag{$P_1$}
		\begin{cases}
			(-\Delta)^s u = f(u) &\text{ in } \mathbb{R}_+,\\
			\ \ u=0 &\text{ in } \mathbb{R}\setminus \mathbb{R}_+.
		\end{cases}
	\end{equation}
At the best of our knowledge, this problem is not very well understood at present. The fact 
that $N=1$ does not substantially simplify the expression of the operator $(-\Delta)^s$ 
seems to be responsible for this lack of knowledge. In spite of this, when the problem is 
posed in $\R$ and special solutions are taken into account, there has been some progress in 
\cite{MR3165278}, \cite{MR3280032}.

When $s=1$, however, the corresponding problem 
\begin{equation}\label{eq:prob-unid-local}
		\begin{cases}
			- u'' = f(u) &\text{ in } \mathbb{R}_+,\\
			\ \ u(0)=0 
		\end{cases}
	\end{equation}
has been extensively studied, and it is easy to see that there exists a bounded positive solution of 
\eqref{eq:prob-unid-local} if and only if $\rho=\|u\|_{L^\infty(\R_+)}$ verifies 
$f(\rho)=0$ and 
	\begin{align}
		\label{HF1}\tag{F} 
		& F(t)<F(\rho) \text{ for all } t\in [0,\rho),
	\end{align}
where $F$ is the primitive of $f$ vanishing at zero, $F(t)=\int_0^t f(\tau) d\tau$. Moreover, 
the solutions are increasing in $x$, and there exists a unique solution with a prescribed value of 
$\rho$. Thus problem \eqref{eq:prob-unid-local} admits as many solutions as zeros of 
$f$ verifying condition \eqref{HF1}. 
This is an immediate consequence of the existence of an energy for solutions of \eqref{eq:prob-unid-local}. 
As can be directly checked, if $u$ is a solution of $-u''=f(u)$ in $(0,+\infty)$, the function 
$$
E(x):= \frac{u'(x)^2}{2} + F(u(x)), \qquad x>0,
$$
is constant. It is also important to remark that the uniqueness of solutions for initial value problems 
associated to the equation plays also an important role in this characterization.

On the contrary, for the nonlocal problem \eqref{eq:Problema-unidim}, no energy is known to exist 
for the moment despiste the Hamiltonian identity obtained in \cite{MR3165278, MR2177165} for layer solutions using the extension tool \cite{MR2354493}, and of course initial value problems have no sense in its context. Thus existence and 
uniqueness of solutions and their monotonicity have to be shown in an alternative way. 

Indeed, we will prove that problem \eqref{eq:Problema-unidim} possesses the same features as the local 
version, by constructing solutions in a different way. In addition, we will also obtain a nonlocal energy 
which can be used to show the uniqueness of solutions with a prescribed maximum. We strongly believe that this 
energy could be useful in other one-dimensional problems.

Before stating our main result, we remark that solutions of \eqref{eq:Problema-unidim} are expected 
to have a singular derivative at $x=0$, and we need to consider the `fractional derivative'
\begin{equation}\label{def-elecero}
\ell_0:=\lim_{x\to 0^+} \frac{u(x)}{x^s}.
\end{equation}
The existence of this limit for solutions of \eqref{eq:Problema-unidim} is consequence of the 
regularity results in \cite{MR3168912}.

We will establish now the main results of the work:
\begin{theorem}\label{theorem:unidim}
Assume $f$ is locally Lipschitz and $\rho>0$ is such that $f(\rho)=0$ and condition \eqref{HF1} is verified. Then 
there exists a unique positive solution $u$ of \eqref{eq:Problema-unidim} with the property
$$
\|u \|_{L^\infty(\R)}=\rho.
$$
Moreover, $u$ is strictly increasing and $\ell_0$ in \eqref{def-elecero} is given by
\begin{equation}\label{eq:elecero}
\ell_0= \frac{(2F(\rho))^\frac{1}{2}}{\Gamma(1+s)}.
\end{equation}
Finally, all bounded positive solutions of \eqref{eq:Problema-unidim} are of the above form.
\end{theorem}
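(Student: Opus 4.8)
The plan is to establish Theorem~\ref{theorem:unidim} in three stages: existence, the formula for $\ell_0$ together with monotonicity, and finally uniqueness together with the classification of \emph{all} bounded positive solutions. For \textbf{existence}, since no shooting/ODE method is available, I would construct the solution variationally. The natural object is the energy functional associated to \eqref{eq:Problema-unidim} on the fractional Sobolev space of functions vanishing in $\R\setminus\R_+$, namely (formally) $J(u)=\tfrac{c(1,s)}{4}\iint \frac{|u(x)-u(y)|^2}{|x-y|^{1+2s}}\,dx\,dy - \int_{\R_+} \widetilde F(u)$, where $\widetilde F$ is $F$ modified above the level $\rho$ so as to force $\|u\|_\infty=\rho$ (a standard truncation, using $f(\rho)=0$ and \eqref{HF1} to guarantee the maximum is exactly $\rho$ and not smaller). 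Because the domain is unbounded there is no compactness, so I would first solve the problem on bounded intervals $(0,R)$ (or balls $B_R^+$), obtaining a positive solution $u_R$ by minimization or mountain-pass, then pass to the limit $R\to\infty$ using the monotonicity in $R$ (sub/supersolution comparison) and uniform $C^{s}$ (or $C^{2s+\alpha}_{loc}$) estimates from \cite{MR2270163,MR2781586,MR3168912}. Condition \eqref{HF1} is exactly what prevents the limit from being identically zero: it makes $\widetilde F(\rho)>0$ the strict global maximum, giving a nontrivial energy lower bound.

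For \textbf{monotonicity and the value of $\ell_0$}, once a bounded positive solution $u$ is in hand, sliding/moving-plane arguments in the $x$ direction — as already developed for \eqref{eq:Problema} in the half-space in \cite{MR3624935} and usable verbatim in $N=1$ — show $u$ is nondecreasing, and then the strong maximum principle for $(-\Delta)^s$ upgrades this to strictly increasing; in particular $u(x)\to\rho$ as $x\to+\infty$. The identity \eqref{eq:elecero} is where the promised \emph{nonlocal energy} enters. I expect the paper to produce a quantity $E(x)$, built from a Caffarelli--Silvestre type extension or from a one-dimensional Pohozaev/Rellich computation, which is constant (or monotone) along the solution; evaluating it as $x\to+\infty$ gives $F(\rho)$ (the ``kinetic'' part dies because $u$ flattens out), while evaluating it as $x\to0^+$ and using the asymptotics $u(x)\sim \ell_0 x^s$ together with the explicit constant $c(1,s)$ and Beta/Gamma-function evaluations of $\int_0^\infty \frac{|t^s-1|^2}{|t-1|^{1+2s}}\,dt$-type integrals produces a multiple of $\ell_0^2$. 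Matching the two evaluations yields $\ell_0^2 = 2F(\rho)/\Gamma(1+s)^2$, i.e.\ \eqref{eq:elecero}. I regard this energy identity as \emph{the main obstacle}: isolating the correct boundary term at $x=0$, justifying that the extension's Dirichlet integral decomposes with no leftover cross terms, and carrying out the special-function bookkeeping so that precisely $\Gamma(1+s)$ appears, is the delicate technical heart of the argument.

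For \textbf{uniqueness with prescribed maximum and the full classification}, suppose $u_1,u_2$ are bounded positive solutions with $\|u_1\|_\infty=\|u_2\|_\infty=\rho$. Both are strictly increasing with limit $\rho$ at $+\infty$, and both have the \emph{same} $\ell_0$ by \eqref{eq:elecero}, since $\ell_0$ depends only on $\rho$ and $F$. A translation/sliding comparison then forces $u_1\equiv u_2$: translating $u_2$ far to the right makes it lie above $u_1$, one slides it back to the critical position, and at the first contact either an interior contact (strong maximum principle, using $f$ locally Lipschitz) or a boundary contact (contradiction via the equal value of $\ell_0$ controlling the $x^s$-behaviour at $0$) gives equality. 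Finally, to classify \emph{all} bounded positive solutions: given any such $u$, set $\rho:=\|u\|_\infty$; monotonicity gives $u\to\rho$, and feeding the equation plus $(-\Delta)^s u\to 0$ (a consequence of $u\to\rho$ and decay estimates) yields $f(\rho)=0$. To get \eqref{HF1}, I would run the energy identity again: if $F(t)=F(\rho)$ for some $t\in[0,\rho)$, or $F(t)>F(\rho)$, the constancy of $E$ forces the extension's gradient to vanish on a set of positive measure, hence $u$ constant — contradicting $u(0)=0<\rho$. Thus any bounded positive solution satisfies the hypotheses of the theorem and, by the uniqueness just proved, coincides with the constructed one for its own value of $\rho$; this is precisely the asserted ``all bounded positive solutions are of the above form.''
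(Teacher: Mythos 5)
Your overall architecture tracks the paper's (truncated minimization on bounded domains plus a sub/supersolution limit for existence; moving planes for monotonicity; an energy identity evaluated at $x\to 0^+$ and $x\to+\infty$ to get $\ell_0$; Hopf's lemma plus the common value of $\ell_0$ for uniqueness), but two steps you rely on would fail as stated. First, the constant in \eqref{eq:elecero}. The paper's energy is obtained by directly integrating $u'(x)(-\Delta)^su(x)$ — no Caffarelli--Silvestre extension — and letting $a\to0^+$ produces the constant $\mathcal{K}(s)$ in \eqref{Csb}, a combination of singular integrals that is \emph{not} evaluated by any ``Beta/Gamma bookkeeping''. The identity $\mathcal{K}(s)=\Gamma(1+s)^2/2$ is proved indirectly: for $s>\tfrac12$ the authors apply the Pohozaev identity of \cite{MR3211861} to the auxiliary problem $(-\Delta)^su=\lambda(1-u)$ on $\R_+$, first constructing the subsolution behaving like $1-x^{-2s}$ so that $f(u)\in L^1$ (this is exactly where $s>\tfrac12$ is needed), and comparing with \eqref{eq-prel}; then they extend to all $s\in(0,1)$ by showing $\mathcal{K}$ is analytic in the strip $0<\mathrm{Re}\,s<1$ and invoking the identity principle. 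Your plan presumes a direct closed-form evaluation that is not available; acknowledging the step as ``the delicate heart'' does not supply the missing idea.

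Second, your derivation of \eqref{HF1} for an arbitrary bounded solution is based on local intuition that does not transfer. In the nonlocal identity the ``kinetic'' part is a \emph{difference} of two positive quantities (it equals $\int_a^{+\infty}u'f(u)\,dx$, which has no sign), so constancy of the energy does not force $F(u(a))\le F(\rho)$, and no extension-based Hamiltonian with a signed kinetic term is known for this exterior-Dirichlet problem (the introduction of the paper stresses precisely this). The paper's actual argument is different: if \eqref{HF1} failed, take the first $\rho_0\in(0,\rho)$ where $F$ attains its maximum on $[0,\rho]$; then $f(\rho_0)=0$ and \eqref{HF1} holds at level $\rho_0$, so there is a one-dimensional solution $v$ with supremum $\rho_0$; sandwiching a maximal solution $w$ between $v$ and $\rho$ and applying Hopf's lemma to $w-v$ contradicts \eqref{eq:elecero}, since $F(\rho_0)\ge F(\rho)$ forces $\lim_{x\to0^+}(w-v)/x^s\le0$. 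You need an argument of this type. A smaller but real issue of the same flavour affects your uniqueness step: since both solutions tend to $\rho$ at $+\infty$, ``translating far to the right makes it lie above'' cannot be initiated without decay information; the paper avoids any sliding there by comparing an arbitrary solution with the \emph{maximal} solution relative to $\rho$, for which the ordering is automatic, and then concluding by Hopf plus the equality of the two $\ell_0$'s.
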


For every positive $\rho$ verifying \eqref{HF1} we denote the unique positive solution given 
by Theorem \ref{theorem:unidim} by $u_{\rho}$. 

\medskip

Once solutions of the one-dimensional problem are completely understood, we expect them to give 
rise to special solutions of \eqref{eq:Problema}. While in the local case $s=1$ this is immediate, 
it is not completely straightforward when $s\in (0,1)$, due to the presence of a constant in 
the definition of $(-\Delta)^s$ which depends on the dimension $N$. We are unaware if this fact 
is already present somewhere in the literature, but we include a proof for completeness. That is, we have the following.

\begin{proposition}\label{theorem:exist-half}
Under the conditions of Theorem \ref{theorem:unidim}, let $u_{\rho}$ be the bounded positive solution 
of the one-dimensional problem \eqref{eq:Problema-unidim}. Then the function 
\begin{equation}\label{eq-urho}
u(x)= u_{\rho}(x_N), \qquad x\in \R^N
\end{equation}
is a bounded positive solution of \eqref{eq:Problema}. Conversely, if $u$ is a bounded solution of 
\eqref{eq:Problema} which depends only on $x_N$, then \eqref{eq-urho} holds for some $\rho>0$ such that 
$f(\rho)=0$ and \eqref{HF1} is verified.
\end{proposition}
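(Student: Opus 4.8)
The plan is to establish the forward direction by a direct computation that the dimension-dependent normalization constants $c(N,s)$ and $c(1,s)$ are reconciled precisely by integrating out the $x'$ variables, and then to obtain the converse essentially for free from Theorem 1.1. For the forward direction, suppose $u(x)=u_\rho(x_N)$ with $u_\rho$ solving \eqref{eq:Problema-unidim}. Fix $x=(x',x_N)\in\R^N_+$ and write, using the change of variables $y=(y',y_N)$,
\begin{equation*}
(-\Delta)^s u(x) = c(N,s)\,\mathrm{p.v.}\!\int_{\R^N}\frac{u_\rho(x_N)-u_\rho(y_N)}{|x-y|^{N+2s}}\,dy
= c(N,s)\,\mathrm{p.v.}\!\int_{\R}\Big(u_\rho(x_N)-u_\rho(y_N)\Big)\Big(\int_{\R^{N-1}}\frac{dz'}{(|z'|^2+(x_N-y_N)^2)^{\frac{N+2s}{2}}}\Big)\,dy_N,
\end{equation*}
where $z'=x'-y'$. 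The inner integral is evaluated by scaling $z'=|x_N-y_N|\,w'$, which produces a factor $|x_N-y_N|^{-(1+2s)}$ times the constant $\int_{\R^{N-1}}(|w'|^2+1)^{-\frac{N+2s}{2}}dw'$; this last integral is a standard Beta-function computation giving $\pi^{\frac{N-1}{2}}\Gamma(\frac{1+2s}{2})/\Gamma(\frac{N+2s}{2})$. Substituting back, one finds that $(-\Delta)^s u(x)$ equals $(-\Delta)^s u_\rho(x_N)$ multiplied by the ratio $c(N,s)\,\pi^{\frac{N-1}{2}}\Gamma(\frac{1+2s}{2})\,\Gamma(\frac{1+2s}{2})^{-1}\Gamma(\frac{N+2s}{2})^{-1} / c(1,s)$; the key point is that with the explicit form \eqref{eq-const-norm} of $c(N,s)$ this ratio is exactly $1$. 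Hence $(-\Delta)^s u(x)=(-\Delta)^s u_\rho(x_N)=f(u_\rho(x_N))=f(u(x))$ for $x_N>0$, while $u=0$ in $\R^N\setminus\R^N_+$ is immediate since $u_\rho=0$ on $\R\setminus\R_+$; boundedness and positivity of $u$ are inherited from $u_\rho$. I would present the verification that the constant ratio is $1$ as the technical heart of the argument, referring to the normalization identity behind \eqref{eq-const-norm} (the same one used in \cite{MR2754080}); care is needed to justify exchanging the principal value with the $z'$-integration, which is handled by noting the $z'$-integral converges absolutely and the remaining one-dimensional principal value is exactly the one defining $(-\Delta)^s u_\rho$.

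For the converse, assume $u$ is a bounded solution of \eqref{eq:Problema} depending only on $x_N$, say $u(x)=v(x_N)$ for some bounded function $v:\R\to\R$ with $v=0$ on $\R\setminus\R_+$. Applying the computation of the forward direction in reverse — that is, the same Fubini-and-scaling identity, which is purely algebraic and does not use the equation — shows that $(-\Delta)^s u(x)=(-\Delta)^s v(x_N)$ pointwise, so $v$ solves \eqref{eq:Problema-unidim}. Since $u$ is a bounded positive solution of \eqref{eq:Problema} (positivity of nonnegative nontrivial solutions being standard, or assumed in the statement's context), $v$ is a bounded positive solution of \eqref{eq:Problema-unidim}, and Theorem \ref{theorem:unidim} then forces $v=u_\rho$ for $\rho:=\|v\|_{L^\infty(\R)}=\|u\|_{L^\infty(\R^N)}$, with $f(\rho)=0$ and \eqref{HF1} holding. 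This is precisely \eqref{eq-urho}.

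The main obstacle I anticipate is purely bookkeeping: matching the Gamma-function factors so that the constant works out to exactly $1$, and making sure the principal-value interchange is rigorous rather than formal. Both are routine but must be done carefully, since the whole point of the proposition is that the naive guess \eqref{eq-urho} is correct despite the $N$-dependence of the normalization constant. No genuinely new idea beyond Theorem \ref{theorem:unidim} and the explicit constant \eqref{eq-const-norm} is required.
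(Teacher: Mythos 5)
Your argument is correct and follows essentially the same route as the paper: Fubini plus the scaling $y'=x'+|x_N-y_N|z'$ reduces everything to the identity $c(N,s)\int_{\R^{N-1}}(|z'|^2+1)^{-\frac{N+2s}{2}}dz'=c(1,s)$, verified by the same Beta-function computation $\int_{\R^{N-1}}(|z'|^2+1)^{-\frac{N+2s}{2}}dz'=\pi^{\frac{N-1}{2}}\Gamma\bigl(s+\tfrac12\bigr)/\Gamma\bigl(s+\tfrac{N}{2}\bigr)$ together with \eqref{eq-const-norm}, and the converse is then read off from Theorem \ref{theorem:unidim} exactly as intended. Only note the typographical slip in your displayed ratio, where $\Gamma\bigl(\tfrac{1+2s}{2}\bigr)$ appears together with its inverse; the intended factor is $\pi^{\frac{N-1}{2}}\Gamma\bigl(s+\tfrac12\bigr)/\Gamma\bigl(s+\tfrac{N}{2}\bigr)$, with which the ratio is indeed $1$.
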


\bigskip

In the light of Proposition \ref{theorem:exist-half}, it is natural to ask as in the local case 
whether all bounded, positive solutions of \eqref{eq:Problema} come from solutions of 
\eqref{eq:Problema-unidim}. Thus we pose the following

\medskip

\begin{quote}
{\bf Conjecture:} assume $f$ is locally Lipschitz and let $u$ be a bounded positive solution of 
\eqref{eq:Problema}. Then $u$ is one-dimensional.
\end{quote}

\medskip

We are unable to prove this conjecture in its full generality, but we will address some 
particular instances which are generalizations of some known facts in the local case. We begin by 
considering the case where the maximum of $u$ is a zero of $f$, as in \cite{MR1260436}. To be 
more precise, it was assumed there that $f(\|u\|_{L^\infty(\R^N)})\le 0$, but it is easily seen 
that this condition is equivalent to $f(\|u\|_{L^\infty(\R^N)})= 0$. Then we have the next

\begin{theorem}\label{theorem:main}
		Assume $f$ is locally Lipschitz and let $u$ be a bounded positive solution of \eqref{eq:Problema}. 
		Suppose in addition that $\rho=\|u\|_{L^\infty(\R^N)}$ verifies $f(\rho)=0$. Then $f$ verifies 
		\eqref{HF1} and $u$ is one-dimensional. More precisely, 
		$$
		u(x)= u_{\rho}(x_N), \quad x\in \R^N.
		$$
\end{theorem}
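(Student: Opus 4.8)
The plan is to exploit the monotonicity result of \cite{MR3624935} together with a sliding/translation argument in the directions $x'$ parallel to the boundary, using the one-dimensional classification of Theorem \ref{theorem:unidim} as the ``limit profile''. First I would recall that since $f$ is locally Lipschitz and $u$ is a bounded positive solution, a standard bootstrap using the regularity theory of \cite{MR2270163,MR2494809,MR2781586,MR3168912} makes $u$ a classical solution, and by \cite{MR3624935} (after noting $f$ can be modified to be $C^1$ away from nothing relevant, or by the $C^1$ hypothesis being available in the situations of interest) $u$ is strictly increasing in $x_N$. Hence for each fixed $x'$ the function $x_N\mapsto u(x',x_N)$ is increasing and bounded, so it has a limit as $x_N\to+\infty$; and $\rho=\|u\|_{L^\infty(\R^N)}=\sup u$ with $f(\rho)=0$.

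The core of the argument is to show $u(x',x_N)=u_\rho(x_N)$ for every $x'$. I would do this by a sliding method in each horizontal direction $e_i$, $i=1,\dots,N-1$: set $u^t(x)=u(x'+te_i,x_N)$ and compare $u$ with $u^t$. The difficulty is that there is no obvious ordering to start the sliding. Instead I would argue via translations to infinity: pick a sequence of points $(x'_k)$ along which $u(x'_k,\cdot)$ approaches its ``worst'' behaviour (e.g. along which the function realizes, in the limit, the largest or smallest possible horizontal section), translate $v_k(x)=u(x'+x'_k,x_N)$, and pass to a limit $v_\infty$ using the compactness furnished by the interior regularity estimates (which are translation invariant in $x'$). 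The limit $v_\infty$ is again a bounded positive solution of \eqref{eq:Problema}, still increasing in $x_N$, and still with supremum $\rho$ satisfying $f(\rho)=0$; moreover by construction $v_\infty$ touches $u$ (or a monotone rearrangement of $u$) from one side at an interior point.

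The key step is then a \emph{strong maximum principle / rigidity} argument: because $v_\infty$ and $u$ are ordered and satisfy the same equation with an interior contact point, and because the nonlocal operator sees the whole space, the contact forces $v_\infty\equiv u$, or forces $u$ to be independent of $x'$ in that direction. Running this over all $i=1,\dots,N-1$ yields that $u$ depends only on $x_N$. Once that is established, Proposition \ref{theorem:exist-half} applies directly: $u(x)=u_\rho(x_N)$ for some $\rho>0$ with $f(\rho)=0$ and \eqref{HF1} holding; and since $\rho=\sup u=\|u\|_{L^\infty(\R^N)}$, the value of $\rho$ is exactly the one in the hypothesis, and \eqref{HF1} for that $\rho$ is the asserted conclusion. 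I expect the \textbf{main obstacle} to be making the ``worst horizontal section'' selection precise and showing the limit $v_\infty$ genuinely produces an interior contact with the right side of an inequality — i.e. setting up the sliding so that the strong maximum principle can be invoked — since unlike the half-space monotonicity in $x_N$ (where the boundary datum $u=0$ provides a clean starting inequality) there is no boundary to anchor the comparison in the $x'$ directions; one must instead use the boundedness of $u$, the monotonicity in $x_N$, and a careful translation argument. A secondary technical point is ensuring the nonlocal strong maximum principle applies with the degenerate (fractional) behaviour near $\partial\R^N_+$, which is where the existence of $\ell_0$ and the precise asymptotics from Theorem \ref{theorem:unidim} will be needed.
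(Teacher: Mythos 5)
There is a genuine gap, and it sits exactly where you flag your ``main obstacle'': the horizontal sliding/contact argument is never actually constructed, and as sketched it cannot be. Translating along a sequence $x_k'$ and extracting a limit $v_\infty$ does not produce a contact point between $v_\infty$ and $u$: the limit touches (at best) another limit of translates of $u$, not $u$ itself at a finite point, and you have no ordering between $u$ and its horizontal translates to begin with -- the boundary condition gives no anchor in the $x'$ directions, as you yourself note. Without an ordering plus an interior touching point between two solutions, the strong maximum principle gives nothing, and ``forces $u$ to be independent of $x'$'' does not follow. Note also that your outline never uses the hypothesis $f(\rho)=0$ in any substantive way; if a pure translation-compactness-plus-maximum-principle argument worked, it would prove the paper's Conjecture for arbitrary locally Lipschitz $f$, which the authors explicitly state they cannot do. A secondary but real defect: you invoke the monotonicity of $u$ in $x_N$ from \cite{MR3624935}, which requires $f\in C^1$, while here $f$ is only locally Lipschitz; the paper explicitly points this out (Step~2 of its proof) and deliberately avoids relying on that monotonicity in the $N$-dimensional setting. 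Your final reduction (``once $u$ depends only on $x_N$, apply Proposition \ref{theorem:exist-half} and Theorem \ref{theorem:unidim}'') is fine, but it is the easy part.

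The paper's route is different and shows where $f(\rho)=0$ enters. It squeezes $u$ between two one-dimensional solutions: the maximal solution $\overline u$ of \eqref{eq:Problema} relative to the constant supersolution $\rho$ (one-dimensional by maximality and translation invariance, and this already yields \eqref{HF1}); and a one-dimensional $\underline u\le u$ built from below. For the latter one first shows $u\ge\rho-\varepsilon$ on large balls far from $\partial\R^N_+$ (this is where $f(\rho)=0$ and the strong maximum principle are used on limits of translates -- the limit is identically $\rho$, not a contact argument), then slides the radial, radially decreasing solutions of the ball problems of Section~2 (which take the slightly negative value $-\delta$ outside the ball) underneath $u$ to get $u\ge c(\eta)>0$ for $x_N\ge\eta$, and finally solves strip problems $\Sigma_{\nu,M}$ with maximal solutions relative to $u$. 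The horizontal sliding is applied to these strip solutions, not to $u$ versus itself, and it is anchored precisely by the mismatch you are missing: the strip solution vanishes on $\partial\Sigma_{\nu,M}$ while $u\ge c(\nu)>0$ there, so a touching point must be interior and maximality then yields one-dimensionality. Letting $\nu\to0$, $M\to\infty$, $\varepsilon\to0$ gives $\underline u\le u\le\overline u$ with both extremes equal to $u_\rho$ by the uniqueness theorem for $(P_1)$, hence $u=u_\rho$.
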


\bigskip

The proof of Theorem \ref{theorem:main} ultimately relies in obtaining good lower bounds for the 
solutions $u$ which allow us to construct a one-dimensional solution below it. It is precisely in 
this step when the condition $f(\rho)=0$ is important. When this condition is not assumed, we 
can still say something by placing some restriction on the behaviour of $f$ at zero. The usual 
condition 
\begin{equation}\label{hip-cero}
\liminf_{t\to 0^+} \frac{f(t)}{t}>0
\end{equation}
has been considered at several places in the literature of local problems with the same objective 
(cf. for instance \cite{MR1470317}). 

A generalization of the results in \cite{MR1470317} has 
been recently obtained in \cite{2016arXiv160407755S}. When it comes to the half-space, it was shown 
there that if $f$ is a function that has a unique positive zero $\rho$, that initially it does not have to be the supremum of the solution, that verifies \eqref{hip-cero} and is negative for values 
larger than $\rho$ and nonincreasing near $\rho$, then every positive, bounded solution of \eqref{eq:Problema} 
is one-dimensional. We improve Corollary 1.2 there, in the sense that we do not require the monotonicity 
condition on $f$ and we show moreover that the solution is unique.

\begin{theorem}\label{theorem:valdinoci}
Assume $f$ is locally Lipschitz and verifies $f>0$ in $(0,\rho)$, $f<0$ in $(\rho,+\infty)$ and 
\eqref{hip-cero}. Then the unique bounded, positive solution of \eqref{eq:Problema} 
is 
$$
u(x)= u_{\rho}(x_N), \quad x\in \R^N,
$$
where $u_{\rho}$ is the unique solution of $(P_1)$ with $\|u\|_{L^{\infty}(\mathbb{R}^{N})}=\rho$ given by Theorem \ref{theorem:unidim}.
\end{theorem}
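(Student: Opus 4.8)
The plan is to deduce this result from Theorem \ref{theorem:main} together with the known monotonicity theory and a maximum-principle argument at the top. First I would show that $\rho$ is forced to be the maximum of $u$. Let $u$ be any bounded positive solution and set $M=\|u\|_{L^\infty(\R^N)}$. By the monotonicity result of \cite{MR3624935} (recalled in the introduction), $u$ is nondecreasing in $x_N$, and in fact one expects $\lim_{x_N\to\infty}u(x',x_N)$ to exist; along this limit one obtains a bounded entire solution of $(-\Delta)^s v=f(v)$ in all of $\R^N$ whose value is close to $M$ at appropriate points. A sliding/plane argument, or simply testing the equation at a point where $u$ is near its supremum, shows $f(M)\le 0$: if $f(M)>0$ then near a near-maximal point the right-hand side is positive while $(-\Delta)^s u$ at an almost-maximum point is nonnegative only in a weak sense — the cleanest route is to note that, since $f>0$ on $(0,\rho)$, a solution with $M<\rho$ cannot exist because $f(u)>0$ everywhere in $\R^N_+$ forces unbounded growth (a standard barrier/sweeping argument using that $(-\Delta)^s$ applied to a bounded function that is not constant must be positive somewhere, contradicting a uniform positive lower bound on $f(u)$). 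Likewise $M>\rho$ is excluded: on the region where $u>\rho$ one has $f(u)<0$, and combined with $u=0$ outside $\R^N_+$ and the monotonicity in $x_N$, a comparison argument shows $u$ cannot exceed $\rho$. Hence $M=\rho$ and, since $f>0$ on $(0,\rho)$ and $f<0$ on $(\rho,\infty)$, automatically $f(\rho)=0$.

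Once $\rho=\|u\|_{L^\infty(\R^N)}$ and $f(\rho)=0$ are established, Theorem \ref{theorem:main} applies verbatim: it gives that $f$ satisfies \eqref{HF1} and that $u(x)=u_\rho(x_N)$. Note that in the present setting condition \eqref{HF1} is in fact immediate from the sign hypothesis, since $F(t)=\int_0^t f(\tau)\,d\tau$ is strictly increasing on $[0,\rho]$ because $f>0$ there, so $F(t)<F(\rho)$ for all $t\in[0,\rho)$; this also guarantees a priori that the solution $u_\rho$ of $(P_1)$ in Theorem \ref{theorem:unidim} exists. The only remaining point is \emph{uniqueness} of the bounded positive solution of \eqref{eq:Problema}: but this is now a consequence of the uniqueness statement in Theorem \ref{theorem:unidim} (there is a unique solution of $(P_1)$ with prescribed maximum $\rho$) combined with the fact, just proved, that \emph{every} bounded positive solution of \eqref{eq:Problema} has maximum exactly $\rho$ and is one-dimensional of the form $u_\rho(x_N)$.

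The hypothesis \eqref{hip-cero} has not yet been used, and it enters exactly in the step that I expect to be the main obstacle: ruling out $M<\rho$, i.e. obtaining the good lower bound that lets Theorem \ref{theorem:main} engage. The condition $\liminf_{t\to0^+}f(t)/t>0$ says $f$ is superlinear-from-below at the origin, and it is what prevents the solution from decaying to $0$ as $x_N\to0^+$ faster than, or comparably to, a would-be small sub-solution; concretely one builds a one-dimensional subsolution $\underline u=\varepsilon u_{\rho_0}(x_N)$ for a small $\rho_0$ with $f(\rho_0)=0$ or, more robustly, a subsolution of the truncated problem using that $f(t)\ge c\,t$ near $0$, slides it up from below against $u$, and concludes by the monotonicity and a strong maximum principle that $u\ge$ some positive one-dimensional barrier with maximum tending to $\rho$; passing to the limit forces $M=\rho$. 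Making this sweeping rigorous in the nonlocal half-space — controlling the tail of $(-\Delta)^s$ over the complement of $\R^N_+$ where everything vanishes, and handling the singular behaviour $u\sim\ell_0 x_N^s$ near the boundary — is the delicate part, and it is essentially the content one borrows from the construction in the proof of Theorem \ref{theorem:main}; the contribution of \eqref{hip-cero} is simply to make that construction available without assuming a priori that $f(\|u\|_\infty)=0$.
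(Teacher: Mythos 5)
Your proposal is essentially correct and, in its final form, follows the same route as the paper: the exclusion of values above $\rho$ by a comparison argument, the lower bound via a subsolution built from \eqref{hip-cero} (choose $c,\nu$ with $f(t)\ge ct$ on $[0,\nu]$, take a ball $B_R$ with $\lambda_1(B_R)\le c$ and slide $\delta\phi(\cdot-x_0)$), and then the strip construction of Step 4 of the proof of Theorem \ref{theorem:main} to produce a one-dimensional solution below $u$. The only structural difference is one of ordering: the paper never isolates the statement $\|u\|_{L^\infty}=\rho$ and then re-invokes Theorem \ref{theorem:main}; it directly sandwiches $u$ between a one-dimensional maximal solution $\overline u$ (relative to the supersolution $\rho$, using $u\le\rho$) and the one-dimensional solution $\underline u$ obtained from the sliding, and then observes that by Theorem \ref{theorem:unidim} both $\overline u$ and $\underline u$ must equal $u_\rho$ because $\rho$ is the only positive zero of $f$ and \eqref{HF1} holds trivially. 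Your detour through Theorem \ref{theorem:main} is logically fine, but the work needed to show $\sup u\ge\rho$ is exactly the construction of $\underline u$, so nothing is saved; the paper's direct squeeze is marginally cleaner.

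Two loose points in your write-up deserve a warning. First, your ``cleanest route'' for excluding $M<\rho$ does not work as stated: there is no uniform positive lower bound on $f(u)$, since $u$ vanishes on $\partial\R^N_+$ and $f(0)$ may be $0$, and the positivity of $(-\Delta)^s u$ in $\R^N_+$ is just the equation, not a contradiction. The genuine mechanism is precisely the eigenfunction subsolution plus sliding that you sketch afterwards, so keep that and drop the heuristic. Second, you invoke the monotonicity of $u$ in $x_N$ from \cite{MR3624935}; that result requires $f\in C^1$, whereas here $f$ is only locally Lipschitz, and the paper deliberately avoids it. Fortunately you do not need it: the exclusion of $u>\rho$ follows from the maximum principle applied to $v=\rho-u$ on the set $D=\{u>\rho\}$, where $(-\Delta)^s v=-f(u)\ge 0$ and $v\ge0$ outside $D$ (this is the paper's first step, via Lemma 4 of \cite{MR3624935}), and the rest of your argument never uses monotonicity in an essential way.
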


\bigskip

As a corollary of Theorem \ref{theorem:valdinoci} we obtain a Liouville theorem for a  
particular class of nonlinearities. 

\begin{corollary}\label{Liouville}
Assume $f$ is locally Lipschitz and verifies $f>0$ in $(0,+\infty)$ and \eqref{hip-cero}. 
Then problem \eqref{eq:Problema} does not admit any positive, bounded solution.
\end{corollary}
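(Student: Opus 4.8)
The plan is to derive Corollary \ref{Liouville} from Theorem \ref{theorem:valdinoci} by a reduction argument, arguing by contradiction. Suppose $u$ is a positive, bounded solution of \eqref{eq:Problema} for a nonlinearity $f$ which is locally Lipschitz, strictly positive on $(0,+\infty)$, and satisfies \eqref{hip-cero}. Set $\rho_0 = \|u\|_{L^\infty(\mathbb{R}^N)}$, which is finite and positive. The idea is to replace $f$ by a modified nonlinearity $\tilde f$ which coincides with $f$ on $[0,\rho_0]$ but is negative for $t > \rho_0$, so that Theorem \ref{theorem:valdinoci} applies to $\tilde f$ and forces $u$ to be the one-dimensional solution $u_{\rho_1}$ associated to the unique positive zero $\rho_1$ of $\tilde f$. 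Since this zero satisfies $\rho_1 > \rho_0 \geq u$, we will then argue that such a one-dimensional solution cannot exist, reaching a contradiction.

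Concretely, first I would choose any $\rho_1 > \rho_0$ and define
$$
\tilde f(t) =
\begin{cases}
f(t), & t \le \rho_0,\\
\dfrac{\rho_1 - t}{\rho_1 - \rho_0}\, f(\rho_0), & \rho_0 \le t \le \rho_1,\\
\rho_0 - t, & t \ge \rho_1,
\end{cases}
$$
or any similar explicit interpolation; the point is that $\tilde f$ is locally Lipschitz, $\tilde f > 0$ on $(0,\rho_1)$ (using $f>0$ on $(0,\rho_0]$), $\tilde f < 0$ on $(\rho_1, +\infty)$, $\tilde f(\rho_1)=0$, and $\tilde f$ agrees with $f$ near $0$ so that \eqref{hip-cero} holds for $\tilde f$ as well. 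Since $u$ takes values only in $[0,\rho_0]$ where $\tilde f = f$, the function $u$ is also a bounded positive solution of \eqref{eq:Problema} with nonlinearity $\tilde f$. Theorem \ref{theorem:valdinoci} then yields $u(x) = \tilde u_{\rho_1}(x_N)$, the unique one-dimensional solution with $L^\infty$ norm equal to $\rho_1$.

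The contradiction now comes from comparing maxima: by construction $\|u\|_{L^\infty(\mathbb{R}^N)} = \rho_0 < \rho_1 = \|\tilde u_{\rho_1}\|_{L^\infty(\mathbb{R}^N)}$, yet these are the same function. Alternatively, and perhaps more cleanly, one can bypass the interpolation entirely: if $f > 0$ on all of $(0,+\infty)$, then for any candidate $\rho$ the condition \eqref{HF1} would require $F(t) < F(\rho)$ for $t \in [0,\rho)$, which holds since $F$ is strictly increasing, but the hypotheses of Theorem \ref{theorem:valdinoci} (or Theorem \ref{theorem:unidim}) demand $f(\rho) = 0$, and $f$ has no positive zero — so no one-dimensional solution $u_\rho$ exists at all. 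Thus, once Theorem \ref{theorem:valdinoci} (via the truncation) forces $u$ to be one-dimensional of the form $u_{\rho_1}$, Proposition \ref{theorem:exist-half} together with Theorem \ref{theorem:unidim} would in turn force $f(\rho_1) = 0$, contradicting $f > 0$ on $(0,+\infty)$.

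The main obstacle, and the step requiring care, is the verification that the truncated nonlinearity $\tilde f$ genuinely satisfies all hypotheses of Theorem \ref{theorem:valdinoci} and, crucially, that $u$ remains a solution after the modification: this uses the maximum principle for the fractional Laplacian to ensure $0 \le u \le \rho_0$ everywhere, so that the values of $f$ above $\rho_0$ are never seen by the equation. (The bound $u \le \rho_0$ is immediate from the definition of $\rho_0$; the bound $u \ge 0$ is part of the hypothesis.) One should also confirm that $\rho_1$ can always be taken as the \emph{unique} positive zero of $\tilde f$, which holds because $\tilde f > 0$ strictly on $(0,\rho_1)$. With these checks in place the contradiction is immediate, so this is a short corollary whose only subtlety is the bookkeeping of the truncation.
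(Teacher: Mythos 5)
Your argument is correct and is essentially the paper's own proof: truncate $f$ above $\rho_0=\|u\|_{L^\infty(\mathbb{R}^N)}$ so the modified nonlinearity keeps \eqref{hip-cero}, stays positive on $(0,\rho_1)$, vanishes at some $\rho_1>\rho_0$ and is negative beyond it, note that $u$ still solves the problem since it only sees values in $[0,\rho_0]$, apply Theorem \ref{theorem:valdinoci}, and reach the contradiction $\rho_0<\rho_1$ by comparing suprema. (Only a cosmetic slip: the third branch of your explicit $\tilde f$ should be $\rho_1-t$ rather than $\rho_0-t$ to keep $\tilde f$ continuous at $\rho_1$, but, as you say, any such interpolation works.)
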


\bigskip

To conclude the introduction we will briefly comment on our methods of proof. With regard 
to the one-dimensional problem \eqref{eq:Problema-unidim}, the existence of solutions follows 
by means of sub and supersolutions. It is worthy of mention that precise subsolutions have 
to be constructed in order to ensure that the solutions so obtained have the desired 
$L^\infty$ norm. These subsolutions are shown to exist with an adaptation of the results in 
\cite{MR937538}. As for uniqueness, it is obtained thanks to Hopf's Lemma and the 
characterization \eqref{eq:elecero}. 
This characterization follows because of our nonlocal energy, furnished by Theorem \ref{thm-pral-energia} 
below. The energy is obtained by direct integration of the expression $u'(x)(-\Delta)^s u(x)$, with a 
careful analysis of all the appearing terms. It is to be noted that the same expression can be 
obtained with the results in \cite{MR3211861}, which however need the restriction $f(u)\in L^1$. 
This could not hold in general. 

As for the rest of our theorems, most of them follow with the use of the well-known sliding method, 
see \cite{MR1159383}. However, some additional care is needed because the subsolutions we slide 
do not have a compact support, which is the usual situation. The method of sub and supersolutions, 
providing with a maximal solution in each case is the other essential tool in our approach. 

\medskip

The rest of the paper is organized as follows: Section 2 is dedicated to the existence of 
solutions for problems \eqref{eq:Problema} and \eqref{eq:Problema-unidim}. In Section 3, 
we obtain our nonlocal energy and use it to prove the uniqueness of solutions of 
\eqref{eq:Problema-unidim}. Section 4 is devoted to the proof of our main results, and an 
Appendix is included dealing with the method of sub and supersolutions.

\bigskip

%%%%%%%%%%%%%%%%%%%%%%%%%%%%%%%%%%%%%%%%%%%%%%%%%%%%%%%%%%%%%%%%%
\section{Existence of solutions}\label{existence}
%%%%%%%%%%%%%%%%%%%%%%%%%%%%%%%%%%%%%%%%%%%%%%%%%%%%%%%%%%%%%%%%%
\setcounter{equation}{0}
	
	In this section we are concerned with the existence of positive solutions 
	of the problem 
	\begin{equation}\label{eq-unidim}\tag{$P_1$}
	\begin{cases}
			(-\Delta)^s u = f(u) &\text{ in } \mathbb{R}_+,\\
			\ \ u=0 &\text{ in } \mathbb{R}\setminus \mathbb{R}_+.\\
	\end{cases}
	\end{equation}
	More precisely, if the function $f$ is locally Lipschitz and $\rho>0$ is such that $f(\rho)=0$ and 
	\eqref{HF1} is satisfied, then 
	we will show that there exists a positive, viscosity solution of \eqref{eq-unidim} which is 
	increasing in $x$ and verifies in addition $\lim_{x\to +\infty} u(x)=\rho$. Recall that viscosity 
	solutions are automatically classical.
	
  To simplify the notation, throughout this section we will omit 
	the normalization constant $c(N,s)$ in the definition of the fractional laplacian. 
	
	%%%%%%%%%%%%%%%%%%%%%%%%%%%%%%%%%%%%%%%%%%%%%%%%%%%%%%%%%%%%%
	\subsection{Existence of solutions in a ball}
	%%%%%%%%%%%%%%%%%%%%%%%%%%%%%%%%%%%%%%%%%%%%%%%%%%%%%%%%%%%%%
	
	Although we will primarily deal with the one-dimensional problem \eqref{eq-unidim}, in the procedure 
	we need to consider several related problems which are posed in finite domains. For its use in Section 
	\ref{sec-main}, we will analyze the $N$-dimensional problem 
		\begin{equation}\label{eq:PBD}
			\begin{cases}
				(-\Delta)^s u = f(u) &\text{ in } 
				B_R,\\
				\ \ u=0 &\text{ in } B_R^{c}=\mathbb{R}^N\setminus
				 B_R,
			\end{cases}
		\end{equation} 
	where $B_R\subset\mathbb{R}^N$, $N\geq 1$, stands for the ball of radius $R$ centered at the origin. 
	However, all the results in this section are directly generalized to problems where the underlying 
	domain is a dilation of a fixed one.

In general, there is no hope that problem \eqref{eq:PBD} admits nonnegative solutions. This is the reason 
why we are imposed in a first stage the additional assumption $f(0)\ge0$. 
	
	\begin{lemma}\label{lemma:EBD1} 
		Assume $f$ is locally Lipschitz in $\R$ and $\rho>0$ is such that $f(\rho)=0$ and \eqref{HF1} is satisfied, 
		together with $f(0)\ge 0$. Then for every 
		$\varepsilon>0$ there exists a positive number $R_{0}=R_0(\varepsilon)$ such that for $R\ge R_0$, problem 
		\eqref{eq:PBD} admits a positive viscosity solution $u_R\in C^s(\mathbb{R}^N)$, verifying in addition
			\begin{equation}\label{eq:EBD11}
			\rho-\varepsilon\le 
			\|u_R\|_{\scriptstyle L^\infty(B_R)} < \rho.
		\end{equation}	
	\end{lemma}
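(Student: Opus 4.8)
The plan is to obtain $u_R$ as a maximal solution produced by the method of sub- and supersolutions (deferred to the Appendix), so the work reduces to exhibiting an ordered pair of a subsolution and a supersolution of \eqref{eq:PBD} whose sup-norms pinch down to $\rho$ as $R\to\infty$. The supersolution is easy: since $f(\rho)=0$, the constant $\overline u\equiv\rho$ is a (super)solution of the equation in $B_R$ but fails the exterior condition; instead take $\overline u=\rho$ inside a slightly larger ball and $0$ outside, or more simply just take $\overline u\equiv\rho$ as a supersolution of the equation and note that on $B_R^c$ we need $\overline u\ge 0$, which holds; a cleaner choice is $\overline u$ equal to $\rho$ on $\R^N$, which is a supersolution of \eqref{eq:PBD} because $(-\Delta)^s\rho=0=f(\rho)$ in $B_R$ and $\overline u\ge 0$ in $B_R^c$. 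Either way the upper bound $\|u_R\|_{L^\infty}<\rho$ in \eqref{eq:EBD11} follows from the strong maximum principle: a solution cannot touch the constant supersolution $\rho$ without being identically $\rho$, which is incompatible with $u_R=0$ in $B_R^c$.

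The heart of the matter is the construction of a subsolution $\underline u_R\ge 0$, $\underline u_R=0$ in $B_R^c$, with $\|\underline u_R\|_{L^\infty(B_R)}\ge\rho-\varepsilon$ for $R$ large. Here I would adapt the argument of Cl\'ement and Sweers \cite{MR937538}. Fix $\delta\in(0,\rho)$ small enough that, by continuity of $F$ and hypothesis \eqref{HF1}, $F(\rho-\delta)<F(\rho)$ and $f>$ (something controllable) on the relevant range; then one looks for a radial subsolution that rises from $0$ on $\partial B_R$ up to a plateau close to $\rho-\delta/2$ on a large inner ball $B_{R-L}$. On the plateau $(-\Delta)^s\underline u_R\le 0< f(\underline u_R)$ provided $f>0$ there (using \eqref{HF1}, $f$ cannot be $\le 0$ on all of a left-neighborhood of $\rho$, so after possibly decreasing $\delta$ we have $f\ge 0$ near the plateau value, or we absorb the small defect). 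In the transition layer of width $L$ one uses that the fractional Laplacian of a bounded monotone profile is controlled, and that the $C^s$ boundary behavior forced by the exterior-zero condition is compatible with a subsolution inequality if $L$ is taken large; crucially the nonlocal tail contribution from the region where $\underline u_R=0$ is $O(R^{-2s})$ on the plateau and hence negligible for $R\ge R_0(\varepsilon)$. Choosing first $\delta=\delta(\varepsilon)$ with $\rho-\delta\ge\rho-\varepsilon/2$, then $L=L(\delta)$, then $R_0$ so large that all error terms are beaten, yields the desired subsolution with $\|\underline u_R\|_{L^\infty}\ge\rho-\varepsilon$.

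With $\underline u_R\le\overline u_R$ in hand, the Appendix's sub/supersolution method gives a viscosity solution $u_R$ with $\underline u_R\le u_R\le\overline u_R$; in particular $u_R\ge\underline u_R\ge 0$ is nontrivial, $u_R=0$ in $B_R^c$, and $\rho-\varepsilon\le\|u_R\|_{L^\infty(B_R)}$. Positivity of $u_R$ in all of $B_R$ (not merely nonnegativity) follows from the strong maximum principle for $(-\Delta)^s$ once we know $u_R\not\equiv 0$, since $f$ being locally Lipschitz lets us write $f(u_R)=f(0)+c(x)u_R$ with $c\in L^\infty$ and $f(0)\ge 0$. The regularity $u_R\in C^s(\R^N)$ is exactly the boundary regularity theory of Ros-Oton--Serra \cite{MR3168912} (cited in the introduction) applied to the bounded right-hand side $f(u_R)\in L^\infty(B_R)$, and bootstrapping upgrades $u_R$ to a classical solution. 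The strict upper bound in \eqref{eq:EBD11} is the maximum-principle argument already described.

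The step I expect to be the main obstacle is the subsolution construction: unlike the local case, one cannot simply solve an ODE, and the nonlocal nature means the subsolution inequality at a point depends on the global profile, so controlling the transition layer and the far-field tail simultaneously — and making the plateau height genuinely close to $\rho$ — requires the careful quantitative choices indicated above. Everything else (supersolution, existence via the Appendix, positivity, regularity, strict inequality) is routine given the tools cited in the paper.
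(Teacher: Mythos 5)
Your overall frame (supersolution $\equiv\rho$, a nonnegative subsolution with plateau close to $\rho$, the Appendix sub/supersolution theorem, strong maximum principle for strict inequality and positivity, and $C^s$ regularity from \cite{MR3168912}) is reasonable, but the step you yourself flag as the heart of the matter --- the subsolution --- is not actually proved, and the sketch you give for it would not go through under the stated hypotheses. First, the sign on the plateau is backwards: if $\underline u_R$ attains its maximum value $m$ on the plateau, then at a plateau point $(-\Delta)^s\underline u_R\ge 0$ (every $y$ contributes $m-\underline u_R(y)\ge 0$), not $\le 0$; the positive contribution is not only the $O(R^{-2s})$ tail from $\{\underline u_R=0\}$ but also the transition layer, which is not small at plateau points near the layer. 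More seriously, hypothesis \eqref{HF1} is an integral condition only: it allows $f<0$ on large portions of $(0,\rho)$, so in the transition layer the profile necessarily passes through values $t$ with $f(t)<0$, and there the subsolution inequality $(-\Delta)^s\underline u_R\le f(\underline u_R)$ requires a quantitatively negative fractional Laplacian that your construction does not supply; no choice of $\delta$, $L$, $R_0$ in the order you propose addresses this, because the obstruction is not a small error term but the pointwise sign of $f$ along the layer. In the local case Cl\'ement--Sweers do not build such a profile by hand either; and in the present paper a pointwise one-dimensional subsolution is essentially what the lemma is ultimately used to produce, so assuming it here is close to circular.

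The paper's proof avoids this entirely by working variationally: it truncates $f$ (using $f(0)\ge 0$ to keep the truncation compatible with the constraint $0\le u\le\rho$), minimizes the associated energy $J$ on $\widetilde H^s(B_R)$, shows the minimizer satisfies $0\le u_R\le\rho$ by the standard $|v|$ and $\min\{v,\rho\}$ comparisons, and then proves $\|u_R\|_{L^\infty}\ge\rho-\varepsilon$ by a scaling argument: the rescaled minimizer of $J_R$ on $B_1$ is compared with a fixed test function equal to $\rho$ off a thin boundary layer, and since the potential term scales like $R^{2s}$ while the Gagliardo term is $O(1)$, condition \eqref{HF1} (through the quantities $\alpha,\beta$ built from $F(\rho)-F$) forces a contradiction if the sup-norm stayed below $\rho-\varepsilon$ along $R_n\to\infty$. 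This route only sees the primitive $F$, which is exactly what \eqref{HF1} controls, whereas your route needs pointwise information on $f$ that the hypotheses do not give. To repair your argument you would either have to reproduce this energy-comparison mechanism or impose extra sign conditions on $f$ that the lemma does not assume.
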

	
	\begin{proof}
	The proof is an adaptation of that of Lemma 2.1 in \cite{MR937538}, where the local case
	 $s=1$ was analyzed. We split it in 
	two steps.
		
		\medskip
		
		\noindent {\it Step 1.} First we  show that for every $R>0$ 
		there exists a viscosity solution $u_R\in C^{s}(\mathbb{R}^N)$ of  \eqref{eq:PBD} such that   
		$0\le u_R\le \rho$ in $B_R$. For this aim, we define an auxiliary function $\widetilde{f}$ 
		by setting $\widetilde{f}(t)=f(t)$ in $[0,\rho]$, 
		$$
		\widetilde{f}(t)= 0 \quad \text{ for }t>\rho
		$$
		and extend it to negative values by means of 
	 	$$
		\widetilde{f}(t)= 2f(0)- \widetilde{f}(-t) \quad\text{ if } t<0.
		$$
		Observe that the function $\widetilde{f}$ is bounded in $\R$, and $\widetilde{f}(t)-f(0)$ is odd 
		by its very definition. Denote
		\begin{equation}\label{eq:laF}
			\widetilde{F}(t)\coloneqq\int^t_0 \widetilde{f}(s)  ds.
		\end{equation}
	 Next, in the Hilbert space
	\[
		\widetilde{H}(B_R)
		\coloneqq\{u\in H^s(\mathbb{R}^N)
		\colon u=0 \hbox{ a.e. in } B_R^{c}\}
	\]
we define the following functional:
		\[
			J(v)\coloneqq
			\dfrac12\iint_{\R^{2N}}
			\dfrac{|v(x)-v(y)|^2}{|x-y|^{N+2s}}dxdy-
			\int_{B_R} \widetilde{F}(v) dx,
		\]
		(we refer the reader to \cite{MR2879266} or 
		\cite{MR3002745} for a definition of $H^s(\R^N)$ and the use of variational methods for boundary 
		value problems involving the fractional laplacian).

		Observe that $J$ is sequentially weakly lower semicontinuous and the boundedness of $\widetilde{f}$ 
		implies that it is also coercive in $\widetilde{H}^s(B_R)$. Thus it 
		possesses a global minimizer $u_R\in \widetilde{H}^s(B_R)$. We claim that 
		indeed $u_R$ can be chosen to verify 
		\begin{equation}\label{eq-cotas}
		0\le u_R\le \rho. 
		\end{equation}
		To prove the first inequality in \eqref{eq-cotas} we will show that for every $v\in \widetilde{H}^s(B_R)$ 
		we have
		\begin{equation}\label{eq-nonneg}
			J(|v|)\le J(v)
		\end{equation}
		which clearly implies that $u_R$ can be taken to be nonnegative. To show 
		\eqref{eq-nonneg} it is enough to notice that, since $\widetilde{f}(t)-f(0)$ is an odd function, 
		then its primitive $\widetilde{F}(t)-f(0)t$ is even, so that for $t>0$:
		$$
		\widetilde{F}(-t) = \widetilde{F}(t)-2f(0)t \le \widetilde{F}(t),
		$$
		owing to our extra condition $f(0)\ge 0$. This immediately yields $\widetilde{F}(t) \le \widetilde{F}(|t|)$ 
		for $t\in \R$. Since it is also well-known that 
		$$
		||v(x)|-|v(y)||\le |v(x)-v(y)| \quad \text{ for every }
			x,y\in\mathbb{R}^N,
		$$
		then \eqref{eq-nonneg} follows. 
		
		To show the second inequality in \eqref{eq-cotas} we define 		
		$w(x)=\min\{u_R(x),\rho\}$. Observing that $F(t)=F(\rho)$ whenever $t>\rho$ and that 
		\[
			|w(x)-w(y)|\le|u_R(x)-u_R(y)| \quad \text{ for every }
			x,y\in\mathbb{R}^N, 
		\]
		it directly follows that $J(w)\le J(u_R)$. Thus replacing $u_R$ by $w$, we may always assume that 
		the second inequality in \eqref{eq-cotas} holds.
		
		By a standard argument, a minimizer of $J$ in $\widetilde{H}^s(B_R)$ is a weak solution of 
		\eqref{eq:PBD}. In addition, since $f(u_R)\in L^\infty(B_R),$ we deduce using Proposition 1.1 in 
		\cite{MR3168912} that $u_R\in C^s(\mathbb{R}^N)$. Moreover, since then the right-hand side of 
		\eqref{eq:PBD} is a continuous function, then $u_R$ is a viscosity solution of \eqref{eq:PBD}  
		(cf. Remark 2.11 in \cite{MR3168912} or Remark 6 in \cite{MR3161511}).		
			
		\bigskip
		
		\noindent {\it Step 2.} We  prove that for any $\varepsilon>0$ there is a positive number 
		$R_{0}=R_{0}(\varepsilon)$ such that $u_R$ is positive in $B_R$ and \eqref{eq:EBD11} holds if $R>R_{0}.$ 
		
		We begin by observing that the scaled function $w_R(x)=u_R(Rx)$ is a 
		minimizer of
		\[
			J_R(v)\coloneqq
			\dfrac12\iint_{\R^{2N}}
			\dfrac{|v(x)-v(y)|^2}{|x-y|^{N+2s}}dxdy-R^{2s}
			\int_{B_1} \widetilde{F}(v) dx
		\]
		in $\widetilde{H}^s(B_1).$ 
		As a first step in proving \eqref{eq:EBD11}, we will show that
		given $\varepsilon>0$ there is a positive number $R_{0}$ 
		such that
		\begin{equation}\label{eq-supnorm}
			\rho-\varepsilon\le 
			\|w_R\|_{\scriptstyle L^\infty(B_1)}
			\le \rho \quad \text{ for every } R>R_{0}.
		\end{equation}
		Suppose that \eqref{eq-supnorm} does not hold. Then there exist $\varepsilon>0$ and a sequence 
		$R_n\to +\infty$ such that $\|w_n\|_{L^\infty(B_1)}<\rho-\varepsilon,$ where
		$w_n=w_{R_n}.$ 
		Define
		\begin{align*}
			\alpha &=\min\left\{F(\rho)-F(r) \colon 0\le r
			\le \rho-\varepsilon \right\},\\
			\beta 
			&=\max\left\{F(\rho)-F(r)\colon 0\le r
			\le \rho \right\}.
		\end{align*}
		Since, by \eqref{HF1}, $\alpha>0$, we can choose $\delta>0$ small enough to have
		\begin{equation}\label{alph}
			|B_1^\delta|\beta<|B_1|\alpha,
		\end{equation} 
		where 
		$B_1^\delta
		=\{x\in B_1 \colon {\rm dist}(x,\partial B_1)<\delta\}$ 
		and $|\cdot|$ stands for the Lebesgue measure. 
		
		\medskip
		
		We next choose a function $w\in C_0^\infty(B_1)$ 
		satisfying $0\le w(x)\le \rho$ in $B_1^\delta$ and 
		$w\equiv\rho$ in $B_1\setminus B_1^\delta$. Then for a positive constant $C$:
		$$
		\begin{array}{rl}
			J_{R_n}(w)-J_{R_n}(w_n) \hspace{-2mm} & \le \dfrac12
			\displaystyle \iint_{\R^{2N}} \dfrac{|w(x)-w(y)|^2}{|x-y|^{N+2s}}dxdy
			-R^{2s}_n \int_{B_1} (F(w)-F(w_n))\\[.75pc]
			& =C - R^{2s}_n \displaystyle \left(
			\int_{B_1} (F(\rho)-F(w_n)) - \int_{B_1^\delta} (F(\rho)-F(w)) \right)\\[1pc]
			& \le C- R_n^{2s} (\alpha |B_1|-\beta |B_1^\delta|)<0
		\end{array} 
		$$
		for large $n$, thanks to \eqref{alph}. This is a contradiction with the fact that 
		$w_n$ is a minimizer of $J_{R_n}$, which shows that \eqref{eq-supnorm} must be true.
		
		\smallskip
				
		Coming back to the functions $u_R$, we see that 
		\eqref{eq:EBD11} holds except for the strict inequality. However, since $u_R$ is a viscosity 
		solution of \eqref{eq:PBD}, $f$ is locally 
		Lipschitz and $f(\rho)=0$, it is standard by the strong maximum principle that $u_R<\rho$ 
		in $B_R$. Observe that the strong maximum principle also implies that $u_R>0$ in $B_R$, 
		concluding the proof of the lemma. 
		\end{proof}
	
	It is now the turn to remove the extra assumption $f(0)\ge 0.$ As observed before, without 
	this hypothesis we can not guarantee the existence of a positive solution. However, it will be 
	enough for our purposes in the near future to obtain slightly negative solutions. To this aim, we 
	will redefine the function $f$ for negative values when $f(0)<0$. Observe that 
	for small enough positive $\delta$ we have 
	\begin{equation}\label{eq:fdelta}
				\dfrac{f(0)}2\delta +F(\rho) >0.
	\end{equation}
	We define the function $f_\delta$ in $[-\delta,\rho]$ by setting
	\begin{equation}\label{fdelta}
		f_\delta(t)\coloneqq
		\begin{cases}
			\dfrac{f(0)}{\delta}(t+\delta) &\text{ if } t\in[-\delta,0),\\[0.25pc]
			f(t) &\text{ if } t\in[0,\rho].
		\end{cases}	 
	\end{equation}
	In the case $f(0)\ge0,$ we simply take $\delta=0$ and $f_0=f.$
	We now consider a slight variant of problem \eqref{eq:PBD} with $f$ replaced by $f_\delta$ 
	and a negative datum outside $B_R$, namely:
	\begin{equation}\label{eq:PBD2}
			\begin{cases}
				(-\Delta)^s u = f_\delta(u) &\text{ in } B_R,\\
				\ \ u=-\delta &\text{ in } \mathbb{R}^N\setminus B_R.
			\end{cases}
		\end{equation} 
	Note that the function $g_\delta(t)=f_\delta(t-\delta)$ is locally Lipschitz and satisfies 
	\eqref{HF1} with $\rho$ replaced by $\rho+\delta$. Hence we can apply Lemma \ref{lemma:EBD1} to obtain that, for 
	every $\varepsilon>0$, there exists $R_0>0$ such that for $R\ge R_0$, problem \eqref{eq:PBD} with $f$ replaced 
	by $g_\delta$ admits a positive viscosity solution $w_{\delta,R}\in C^s(\mathbb{R}^N)$ verifying 
	\[
		\rho+\delta-\varepsilon\le 
		\|w_{\delta,R}\|_{\scriptstyle L^\infty(B_R)}< \rho+\delta.
	\]
	Setting $u_{\delta,R}=w_{\delta,R}-\delta,$ we get the 
	following result:
		
	\begin{lemma}\label{corollary:EBD}		
			Assume $f$ is locally Lipschitz in $\R$ and $\rho>0$ is such that \eqref{HF1} is verified. 
			If $\delta>0$ is small enough so that \eqref{eq:fdelta} holds, then for every 
		$\varepsilon>0$ there exists a positive number $R_{0}=R_0(\varepsilon)$ such that for $R\ge R_0$, problem 
		\eqref{eq:PBD2} admits a viscosity solution $u_{\delta,R} \in C^s(\mathbb{R}^N)$, verifying 
		$u_{\delta,R}>-\delta$ in $B_R$ and 
			$$
			\rho-\varepsilon\le 
			\|u_{\delta,R}\|_{\scriptstyle L^\infty(B_R)} < \rho.
			$$
		\end{lemma}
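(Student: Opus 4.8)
The plan is to reduce the statement to Lemma~\ref{lemma:EBD1} through the translation $g_\delta(t)=f_\delta(t-\delta)$ introduced above: a solution of \eqref{eq:PBD2} should be produced from a solution of problem \eqref{eq:PBD} with $f$ replaced by $g_\delta$ by subtracting the constant $\delta$. If $f(0)\ge0$ there is nothing to prove, since then $\delta=0$, $g_0=f$ and the assertion is exactly Lemma~\ref{lemma:EBD1}; so I would assume $f(0)<0$ and fix $\delta>0$ small enough that \eqref{eq:fdelta} holds and, in addition, $\delta<\rho$. The function $g_\delta$ is so far defined only on $[0,\rho+\delta]$, so the first step is to extend it to a locally Lipschitz function on $\R$ (for instance $g_\delta\equiv0$ on $(-\infty,0)$, which is consistent with $g_\delta(0)=f_\delta(-\delta)=0$); only the restriction to $[0,\rho+\delta]$ will actually be used, since the proof of Lemma~\ref{lemma:EBD1} re-extends the nonlinearity beyond its maximum on its own. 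From \eqref{fdelta} one checks that $g_\delta$ is locally Lipschitz (affine on $[0,\delta]$, a translate of $f$ on $[\delta,\rho+\delta]$, with matching values at $\delta$), that $g_\delta(0)=0\ge0$, and that $g_\delta(\rho+\delta)=f_\delta(\rho)=f(\rho)=0$.

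The only hypothesis of Lemma~\ref{lemma:EBD1} that is not immediate --- and the one I expect to require care --- is that $g_\delta$ satisfies \eqref{HF1} with $\rho$ replaced by $\rho+\delta$. I would set $G(t)=\int_0^tg_\delta(\tau)\,d\tau$; the substitution $\sigma=\tau-\delta$ gives $G(t)=\int_{-\delta}^{t-\delta}f_\delta(\sigma)\,d\sigma$, and an explicit computation from \eqref{fdelta} yields
\[
G(t)=\frac{f(0)}{2\delta}\,t^{2}\quad(t\in[0,\delta]),\qquad
G(t)=F(t-\delta)+\frac{f(0)}{2}\,\delta\quad(t\in[\delta,\rho+\delta]).
\]
In particular $G(\rho+\delta)=F(\rho)+\tfrac{f(0)}{2}\delta>0$ precisely by \eqref{eq:fdelta}. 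On $[0,\delta]$ one has $G\le0<G(\rho+\delta)$ because $f(0)<0$; on $[\delta,\rho+\delta)$ one has $G(t)=F(t-\delta)+\tfrac{f(0)}{2}\delta<F(\rho)+\tfrac{f(0)}{2}\delta=G(\rho+\delta)$ by \eqref{HF1} applied to $F$. Hence $G(t)<G(\rho+\delta)$ for every $t\in[0,\rho+\delta)$, i.e.\ $g_\delta$ satisfies \eqref{HF1} with $\rho+\delta$. This is exactly the place where condition \eqref{eq:fdelta} enters, and I regard it as the main obstacle; the rest is bookkeeping.

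It then remains to apply Lemma~\ref{lemma:EBD1} to $g_\delta$ with $\rho+\delta$ in place of $\rho$: for every $\varepsilon>0$ there is $R_0=R_0(\varepsilon)$ such that, for $R\ge R_0$, there exists a positive viscosity solution $w_{\delta,R}\in C^s(\R^N)$ of $(-\Delta)^sw=g_\delta(w)$ in $B_R$, $w=0$ in $B_R^{c}$, with $\rho+\delta-\varepsilon\le\|w_{\delta,R}\|_{L^\infty(B_R)}<\rho+\delta$. Setting $u_{\delta,R}=w_{\delta,R}-\delta$ and using that $(-\Delta)^s$ annihilates constants, one gets $(-\Delta)^su_{\delta,R}=(-\Delta)^sw_{\delta,R}=g_\delta(w_{\delta,R})=g_\delta(u_{\delta,R}+\delta)=f_\delta(u_{\delta,R})$ in $B_R$, while $u_{\delta,R}=-\delta$ in $B_R^{c}$; thus $u_{\delta,R}$ solves \eqref{eq:PBD2}, lies in $C^s(\R^N)$, and is a viscosity solution because the viscosity notion is stable under the addition of constants (the test functions just shift by $\delta$). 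Since $w_{\delta,R}>0$ in $B_R$ we obtain $u_{\delta,R}>-\delta$ there, and using $w_{\delta,R}\ge0$, $\|w_{\delta,R}\|_{L^\infty(B_R)}<\rho+\delta$ and $\delta<\rho$,
\[
\rho-\varepsilon\le\|w_{\delta,R}\|_{L^\infty(B_R)}-\delta\le\|u_{\delta,R}\|_{L^\infty(B_R)}\le\max\Big\{\,\|w_{\delta,R}\|_{L^\infty(B_R)}-\delta,\ \delta\,\Big\}<\rho,
\]
which is the required estimate and completes the argument.
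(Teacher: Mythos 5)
Your proof is correct and follows essentially the same route as the paper, which obtains the lemma by noting that $g_\delta(t)=f_\delta(t-\delta)$ is locally Lipschitz and satisfies \eqref{HF1} with $\rho+\delta$, applying Lemma \ref{lemma:EBD1}, and setting $u_{\delta,R}=w_{\delta,R}-\delta$; you merely spell out the verification of \eqref{HF1} for the primitive of $g_\delta$ (where \eqref{eq:fdelta} enters), which the paper asserts in one line. Your extra observations — that $f(\rho)=0$ is tacitly needed to invoke Lemma \ref{lemma:EBD1} and that $\delta<\rho$ is harmlessly absorbed into ``$\delta$ small enough'' to get the stated $L^\infty$ bound — are sensible refinements of the paper's terse argument, not deviations from it.
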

	
	\bigskip
	
	We next observe that, thanks to Theorem \ref{theorem:SubSup} in the Appendix, whenever a 
	viscosity solution $u$ of \eqref{eq:PBD2} exists with the property $u\le \rho$ in $\R^N$, then 
	a maximal viscosity solution $\widetilde{u}$ of the same problem and with the same property 
	also exists. Here and in what follows, by ``maximal" we mean maximal with respect to the supersolution 
	$\bar u=\rho$, that is, if $v$ is any viscosity solution of \eqref{eq:PBD2} with $v\le \rho$ in $\R^N$ 
	then we have $v\le \widetilde{u}$ in $\R^N$. 
	
	On the other hand, by Theorem 1.1 in \cite{MR3189604}, every positive solution of \eqref{eq:PBD} with 
	$f$ replaced by $g_\delta$ is radially symmetric and radially decreasing. Thus we immediately have:

	\begin{lemma}\label{lemma:EBDM}
		Under the same assumptions as in Lemma \ref{corollary:EBD}, for every 
		$\varepsilon>0$ there exists a positive number $R_{0}=R_0(\varepsilon)$ such that for $R\ge R_0$, problem 
		\eqref{eq:PBD2} admits a maximal viscosity solution $\widetilde{u}_{\delta,R} \in C^s(\mathbb{R}^N)$, verifying 
		$\widetilde{u}_{\delta,R}>-\delta$ in $B_R$ and 
			$$
			\rho-\varepsilon\le 
			\|\widetilde{u}_{\delta,R}\|_{\scriptstyle L^\infty(B_R)} < \rho.
			$$
		Moreover, $\widetilde{u}_{\delta,R}$ is radially symmetric and radially decreasing.
	\end{lemma}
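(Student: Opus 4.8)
\textbf{Proof proposal for Lemma \ref{lemma:EBDM}.}

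The plan is to combine the three ingredients that have just been established: the existence of \emph{some} viscosity solution of \eqref{eq:PBD2} with the prescribed $L^\infty$ control (Lemma \ref{corollary:EBD}), the existence of a \emph{maximal} solution with respect to the supersolution $\bar u=\rho$ (via Theorem \ref{theorem:SubSup} in the Appendix), and the radial symmetry result of \cite{MR3189604}. First I would fix $\varepsilon>0$ and take $R_0=R_0(\varepsilon)$ as in Lemma \ref{corollary:EBD}, so that for $R\ge R_0$ we have at hand a viscosity solution $u_{\delta,R}\in C^s(\R^N)$ of \eqref{eq:PBD2} with $u_{\delta,R}>-\delta$ in $B_R$ and $\rho-\varepsilon\le\|u_{\delta,R}\|_{L^\infty(B_R)}<\rho$. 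In particular $u_{\delta,R}\le\rho$ in $\R^N$ (it equals $-\delta<\rho$ outside $B_R$). Since $\bar u\equiv\rho$ is plainly a (super)solution of \eqref{eq:PBD2} lying above $u_{\delta,R}$ and above the datum $-\delta$, Theorem \ref{theorem:SubSup} applies and yields a maximal viscosity solution $\widetilde u_{\delta,R}\in C^s(\R^N)$ of \eqref{eq:PBD2} with $\widetilde u_{\delta,R}\le\rho$ in $\R^N$ and $\widetilde u_{\delta,R}\ge u_{\delta,R}$.

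Next I would transfer the two-sided bounds from $u_{\delta,R}$ to $\widetilde u_{\delta,R}$. The lower bound $\widetilde u_{\delta,R}>-\delta$ in $B_R$ is immediate from $\widetilde u_{\delta,R}\ge u_{\delta,R}>-\delta$. For the upper estimate $\|\widetilde u_{\delta,R}\|_{L^\infty(B_R)}<\rho$, I would invoke the strong maximum principle exactly as in Step 2 of Lemma \ref{lemma:EBD1}: $\widetilde u_{\delta,R}$ is a viscosity solution, $f_\delta$ is locally Lipschitz, $f_\delta(\rho)=f(\rho)=0$, and $\widetilde u_{\delta,R}\le\rho$, so either $\widetilde u_{\delta,R}\equiv\rho$ in $B_R$ — impossible since it equals $-\delta$ on $\partial B_R$ in the viscosity sense — or $\widetilde u_{\delta,R}<\rho$ strictly in $B_R$. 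Finally, the lower bound on the sup norm comes for free from monotonicity:
$$
\rho-\varepsilon\le\|u_{\delta,R}\|_{L^\infty(B_R)}\le\|\widetilde u_{\delta,R}\|_{L^\infty(B_R)}<\rho .
$$

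For the last assertion I would translate back to the problem with zero exterior datum: the function $\widetilde w_{\delta,R}:=\widetilde u_{\delta,R}+\delta$ solves \eqref{eq:PBD} (posed in $B_R$) with $f$ replaced by $g_\delta(t)=f_\delta(t-\delta)$, and it is positive in $B_R$ because $\widetilde u_{\delta,R}>-\delta$ there. By Theorem 1.1 in \cite{MR3189604}, every positive solution of that problem is radially symmetric and radially decreasing; hence so is $\widetilde w_{\delta,R}$, and therefore so is $\widetilde u_{\delta,R}=\widetilde w_{\delta,R}-\delta$. The step I expect to require the most care is making sure that the hypotheses of the maximality result (Theorem \ref{theorem:SubSup}) and of the symmetry result of \cite{MR3189604} are literally met in the viscosity-solution setting — in particular that ``maximal'' is understood with respect to the barrier $\bar u=\rho$ and that the ordered pair (subsolution $=u_{\delta,R}$, supersolution $=\rho$) is admissible — but these are exactly the facts recorded in the paragraph preceding the statement, so no genuinely new difficulty arises.
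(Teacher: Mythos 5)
Your argument is correct and is essentially the paper's own route: the paper records this lemma as an immediate consequence of Lemma \ref{corollary:EBD}, the maximality construction of Theorem \ref{theorem:SubSup} relative to the supersolution $\bar u=\rho$, and the radial symmetry result of Theorem 1.1 in \cite{MR3189604} applied to $\widetilde u_{\delta,R}+\delta$, exactly as you do. Your extra details (monotonicity transfer of the bounds and the strong maximum principle giving $\widetilde u_{\delta,R}<\rho$ in $B_R$) are just the steps the paper leaves implicit.
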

	
	\medskip
	
	\begin{remark}\label{remark:Order}{\rm 
		Let $R_1<R_2$ and denote by $\widetilde{u}_{\delta,1}, \widetilde{u}_{\delta,{2}}$ 
		the maximal viscosity	solutions of \eqref{eq:PBD2} with 
		$R=R_1$ and $R=R_2,$ respectively. 
		Then $w(x)=\max\{\widetilde{u}_{\delta,1}(x),
		\widetilde{u}_{\delta,2}(x)\}$
		is a viscosity subsolution of \eqref{eq:PBD2} with 
		$R=R_2$. By Theorem \ref{theorem:SubSup} in the Appendix, there exists a solution in 
		the ordered interval $[w,\rho]$, and therefore the maximal solution lies in that interval, 
		that is $w\le \widetilde{u}_{\delta,2}$ in $B_{R_2}$, in fact in $\R^N$. Hence
		\[
			\widetilde{u}_{\delta,1}\le \widetilde{u}_{\delta,2} 
			\text{ in } \R^N.
		\]
		With a similar argument, and taking into account that $f_{\delta}$ is decreasing 
		with respect to $\delta$ we can deduce that, if $\delta_1<\delta_2$ then
		\[
			\widetilde{u}_{1,R}\ge\widetilde{u}_{2,R} 
			\text{ in } \R^N
		\]
		where now $\widetilde{u}_{1,R}$ and 
		$\widetilde{u}_{2,R}$ are 
		the maximal viscosity
		solutions of \eqref{eq:PBD2} with $\delta=\delta_1$
		and $\delta=\delta_2,$ respectively.
	}\end{remark}

	%\begin{remark}\label{remark:DominioGeneral}{\rm 
		%A careful inspection of this section shows that, the results 
		%of Lemma \ref{lemma:EBD1}, Corollary \ref{corollary:EBD} 
		%and Lemma \ref{lemma:EBDM} also hold when we replace
		%the ball $B_R$ with an bounded domain $\Omega$ in 
		%$\mathbb{R}^N$ with smooth boundary. 
	%}\end{remark}
	
	\medskip
	
	%%%%%%%%%%%%%%%%%%%%%%%%%%%%%%%%%%%%%%%%%%%%%%%%%%%%%%%%%%%%%%%%%%%%%
	\subsection{Existence of solutions in $\mathbb{R}_+$}
	%%%%%%%%%%%%%%%%%%%%%%%%%%%%%%%%%%%%%%%%%%%%%%%%%%%%%%%%%%%%%%%%%%%%%
	
	Next we consider again the one-dimensional problem \eqref{eq-unidim}. 
	The purpose of this subsection is to obtain the following existence result:

	\begin{theorem}\label{theorem:MaximalSolR}
		Assume $f$ is locally Lipschitz in $\R$ and $\rho>0$ is such that $f(\rho)=0$ and 
		\eqref{HF1} is verified. Then problem 
		\eqref{eq-unidim} admits a maximal viscosity solution $u\in C^s(\R)$, which is positive and verifies 
		$$
		\| u\|_{L^\infty(\R)}=\rho.
		$$
		In addition, $u$ is strictly increasing for $x>0$ and
		$$
		\lim_{x\to +\infty} u(x)=\rho.
		$$
	\end{theorem}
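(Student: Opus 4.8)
\textbf{Proof proposal for Theorem \ref{theorem:MaximalSolR}.}

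The plan is to obtain the solution in $\R_+$ as a limit of the maximal radial solutions $\widetilde u_{\delta,R}$ on balls $B_R\subset\R$ constructed in Lemma \ref{lemma:EBDM}, first passing to the limit $R\to+\infty$ and then $\delta\to 0^+$. Fix $\delta>0$ small so that \eqref{eq:fdelta} holds, and for each admissible $R$ let $\widetilde u_{\delta,R}$ be the maximal viscosity solution of \eqref{eq:PBD2} in $B_R=(-R,R)$; since $\widetilde u_{\delta,R}$ is radially symmetric and radially decreasing, its restriction to $(0,R)$ is a one-dimensional profile which is monotone, takes the boundary value $-\delta$ at $x=R$, and satisfies $\rho-\varepsilon\le\|\widetilde u_{\delta,R}\|_{L^\infty}<\rho$. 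By Remark \ref{remark:Order} the family $\{\widetilde u_{\delta,R}\}_R$ is nondecreasing in $R$ and uniformly bounded in $[-\delta,\rho]$, hence it converges pointwise as $R\to+\infty$ to a function $u_\delta$. Using that $f_\delta(\widetilde u_{\delta,R})$ is uniformly bounded, the interior $C^s$ estimates of \cite{MR3168912} give local uniform convergence (along subsequences, $C^s_{loc}$ is enough to pass to the limit in the viscosity formulation), so $u_\delta$ is a viscosity — hence classical — solution of $(-\Delta)^s u_\delta=f_\delta(u_\delta)$ in $\R_+$ with $u_\delta=-\delta$ in $\R\setminus\R_+$, and $u_\delta$ is nondecreasing in $x$ with $-\delta\le u_\delta\le\rho$. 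A Harnack/strong maximum principle argument then shows the monotonicity is strict where $u_\delta>-\delta$.

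Next I would send $\delta\to 0^+$. By the second monotonicity statement in Remark \ref{remark:Order}, $\widetilde u_{\delta_1,R}\ge\widetilde u_{\delta_2,R}$ for $\delta_1<\delta_2$, so after passing to the limit in $R$ the family $\{u_\delta\}_\delta$ is monotone in $\delta$ and bounded in $[-\delta,\rho]\subset[-1,\rho]$; it therefore converges as $\delta\to 0^+$ to a limit $u$, and again the $C^s_{loc}$ estimates upgrade this to local uniform convergence and allow passage to the limit, giving a viscosity (classical) solution $u\in C^s(\R)$ of \eqref{eq-unidim} with $0\le u\le\rho$, nondecreasing in $x$. The maximality of $u$ with respect to the supersolution $\bar u=\rho$ is inherited from the construction: any viscosity solution $v$ of \eqref{eq-unidim} with $v\le\rho$ is, for each $R$ and $\delta$, a subsolution of \eqref{eq:PBD2} lying below $\rho$ (note $v\ge 0>-\delta$ on $B_R^c$), so by Theorem \ref{theorem:SubSup} and the definition of maximal solution $v\le\widetilde u_{\delta,R}$ in $\R^N$; letting $R\to\infty$ and $\delta\to 0$ yields $v\le u$. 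Positivity of $u$ follows from the strong maximum principle since $f(0)$ may be negative but $u\ge 0$, $u\not\equiv 0$: indeed $u$ cannot vanish in the open half-line without being identically zero there, and $u\equiv 0$ is incompatible with $\|u\|_{L^\infty}\ge\rho-\varepsilon$ (taking $\varepsilon<\rho$ in the construction).

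It remains to identify the two quantitative facts: $\|u\|_{L^\infty(\R)}=\rho$ and $\lim_{x\to+\infty}u(x)=\rho$. Since $u$ is nondecreasing and bounded above by $\rho$, the limit $\lambda:=\lim_{x\to+\infty}u(x)\le\rho$ exists and $\|u\|_{L^\infty(\R)}=\lambda$; I must show $\lambda=\rho$. The lower bound $\lambda\ge\rho-\varepsilon$ is immediate from \eqref{eq:EBD11} once convergence is established: for each fixed $R$, $\|\widetilde u_{\delta,R}\|_{L^\infty}\ge\rho-\varepsilon$ is attained at the center $x=0$, this passes to the limit, and $\varepsilon>0$ was arbitrary, so $\lambda\ge\sup_\varepsilon(\rho-\varepsilon)$; more carefully one runs the whole construction for a fixed small $\varepsilon$ and notes the resulting $u$ satisfies $\|u\|_{L^\infty}\ge\rho-\varepsilon$, then uses maximality (which is independent of $\varepsilon$) to conclude $\|u\|_{L^\infty}\ge\rho-\varepsilon$ for every $\varepsilon$, i.e. $=\rho$. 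To see that $\lambda=\rho$ forces the profile to saturate, one checks that the translates $u(\cdot+n)$ converge (again by $C^s_{loc}$ compactness) to a bounded entire solution $U$ of $(-\Delta)^sU=f(U)$ in all of $\R$ with $\inf U=\lambda=\sup U$, hence $U\equiv\lambda$ a constant, which gives $f(\lambda)=0$; combined with $\lambda\ge\rho-\varepsilon$ for all $\varepsilon$ this gives $\lambda=\rho$ and incidentally $f(\rho)=0$ consistently with the hypothesis. \emph{The main obstacle} is the passage to the limit and the maximality bookkeeping: one must ensure the $C^s_{loc}$ compactness of \cite{MR3168912} genuinely applies to the ball solutions with the constant $c(N,s)$ suppressed and with the shrinking-but-nonzero boundary data $-\delta$, and that the viscosity-solution property survives both limits (this is where Remark 2.11 of \cite{MR3168912} and the stability of viscosity solutions under local uniform convergence are invoked), and finally that the limit $u$ is still \emph{maximal} — not merely \emph{a} solution — which requires the comparison with an arbitrary competitor $v$ to be carried out uniformly in $R$ and $\delta$ as indicated above.
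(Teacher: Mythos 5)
Your construction breaks down at the very first limit. You take the maximal solutions $\widetilde u_{\delta,R}$ of \eqref{eq:PBD2} on the symmetric intervals $B_R=(-R,R)$ and let $R\to+\infty$; but each $\widetilde u_{\delta,R}$ equals $-\delta$ only \emph{outside} $(-R,R)$ and is positive inside, so the region where the exterior condition holds escapes to infinity and the monotone pointwise limit solves $(-\Delta)^s u=f_\delta(u)$ on all of $\R$ — it does not satisfy $u_\delta=-\delta$ (nor $0$) on $(-\infty,0]$, hence it is not a solution of any half-line problem. Relatedly, your claim that the limit is nondecreasing in $x$ contradicts the fact that each $\widetilde u_{\delta,R}$ is radially decreasing, hence nonincreasing on $(0,R)$; and note that the intermediate problem you actually need, \eqref{eq:PBDeltaR}, carries exterior datum $0$, not $-\delta$. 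The paper's route is different and avoids any limit in $R$: for fixed $\varepsilon$ one takes the solution of \eqref{eq:PBD2} on the \emph{fixed} interval $(0,2R)$ (left endpoint pinned at the origin) from Lemma \ref{lemma:EBDM}, checks it is a \emph{subsolution} of \eqref{eq:PBDeltaR} (it equals its minimum $-\delta\le 0$ off $(0,2R)$, where $(-\Delta)^s$ of it is $\le 0=f_\delta(-\delta)$), and feeds it into Theorem \ref{theorem:SubSupV} with supersolution $\rho$; this yields directly a \emph{maximal} solution $u_\delta$ of the half-line problem relative to $\rho$, and since that maximal solution does not depend on $\varepsilon$ while its sup norm is $\ge\rho-\varepsilon$ for every $\varepsilon$, one gets $\|u_\delta\|_{L^\infty}=\rho$. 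Only the limit $\delta\to0^+$ is then taken, followed by one more application of Theorem \ref{theorem:SubSupV} to recover maximality for \eqref{eq-unidim}.

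Your maximality argument also has the inequality backwards: a viscosity subsolution of \eqref{eq:PBD2} must lie \emph{below} the exterior datum, i.e.\ $v\le-\delta$ on $B_R^c$, whereas your competitor satisfies $v\ge 0>-\delta$ there; so $v$ is not a subsolution of the ball problem, and the asserted comparison $v\le\widetilde u_{\delta,R}$ in $\R^N$ is in fact false outside $B_R$. Maximality must instead come from Theorem \ref{theorem:SubSupV} itself (the maximal solution relative to $\overline u=\rho$ is the same whatever subsolution is used, as shown at the end of the Appendix proof). Two further points are not covered by your proposal: positivity of $u$ does not follow from the strong maximum principle alone when $f(0)<0$ (at an interior zero minimum one only gets $(-\Delta)^s u\le 0$ against $f(0)<0$, no contradiction), and strict monotonicity of the final $u$ is never proved. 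In the paper both are obtained together by sliding: $u_\delta(\cdot-\lambda)$ is a subsolution of \eqref{eq:PBDeltaR}, the solution above it produced by the Appendix lies below the maximal solution, so $u(\cdot-\lambda)\le u$ for every $\lambda>0$, and then $u'>0$ (hence $u>0$) follows as in Step 3 of Theorem 1 of the monotonicity paper cited there.
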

	
	\bigskip
	
	The way to achieve existence of solutions of \eqref{eq-unidim} is to establish it first for 
	a $\delta-$variation of this problem, that is, 
		\begin{equation}\label{eq:PBDeltaR}
	\begin{cases}
			(-\Delta)^s u = f_\delta(u) &\text{ in } \mathbb{R}_+,\\
			\ \ u=0 &\text{ in } \mathbb{R}\setminus \mathbb{R}_+,\\
	\end{cases}
	\end{equation}
	where $f_\delta$ is given by \eqref{eq:fdelta}, and then pass to 
	the limit as $\delta\to 0^+$. Let us recall that we take $\delta=0$ and $f_\delta=f$ when $f(0)\ge 0$.
	
	\begin{lemma}\label{lemma:ExistenceDelta}
		With the same assumptions as in Theorem \ref{theorem:MaximalSolR}, 
		there exists a viscosity solution $u_{\delta}$ of \eqref{eq:PBDeltaR} such that 
		$-\delta< u_{\delta}<\rho$ in $\mathbb{R}_+$ and  
		$\|u_\delta\|_{\scriptstyle L^{\infty}{(\mathbb{R})}}=\rho.$ 
	\end{lemma}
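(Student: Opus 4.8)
The plan is to produce, for every $\varepsilon>0$, a viscosity subsolution of \eqref{eq:PBDeltaR} on $\R_+$ whose supremum exceeds $\rho-\varepsilon$, by transplanting the ball solutions of Lemma \ref{lemma:EBDM} onto a half-line, and then to run the sub and supersolution method of Theorem \ref{theorem:SubSup} with supersolution $\bar u\equiv\rho$.

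Concretely, fix $\varepsilon>0$ and, for $R\ge R_0(\varepsilon)$, let $\widetilde u_{\delta,R}$ be the maximal, radially symmetric, radially decreasing solution of \eqref{eq:PBD2} with $N=1$ given by Lemma \ref{lemma:EBDM}, so that $\widetilde u_{\delta,R}(0)=\|\widetilde u_{\delta,R}\|_{L^\infty(B_R)}\in[\rho-\varepsilon,\rho)$, $\widetilde u_{\delta,R}>-\delta$ on $(-R,R)$ and $\widetilde u_{\delta,R}=-\delta$ on $\R\setminus(-R,R)$. Since $(-\Delta)^s$ is translation invariant, $v_R(x):=\widetilde u_{\delta,R}(x-R)$ solves $(-\Delta)^s v_R=f_\delta(v_R)$ in $(0,2R)$, equals $-\delta=\min_\R v_R$ outside $(0,2R)$, and satisfies $-\delta\le v_R<\rho$ in $\R$. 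I would then check that $v_R$ is a viscosity subsolution of \eqref{eq:PBDeltaR}: it solves the equation in $(0,2R)$, and at every point $x_0\in[2R,+\infty)\subset\R_+$ it attains its global minimum $-\delta$, so any $C^2$ function touching $v_R$ from above at $x_0$ has there a local minimum of value $-\delta$, which forces the value of the operator on it to be $\le 0=f_\delta(-\delta)=f_\delta(v_R(x_0))$; since also $v_R=-\delta\le 0$ in $\R\setminus\R_+$, $v_R$ is indeed a subsolution. The constant $\rho$ is a supersolution, because $(-\Delta)^s\rho=0=f(\rho)=f_\delta(\rho)$ and $\rho\ge 0$ in $\R\setminus\R_+$, and $v_R\le\rho$.

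By Theorem \ref{theorem:SubSup} there is then a maximal viscosity solution $u_\delta$ of \eqref{eq:PBDeltaR} with $v_{R_0}\le u_\delta\le\rho$ in $\R$ (in particular $u_\delta\ge-\delta$). Arguing as in Remark \ref{remark:Order}, for each admissible $R$ the function $\max\{u_\delta,v_R\}$ is again a subsolution below $\rho$, so a solution exists in $[\max\{u_\delta,v_R\},\rho]$ and maximality yields $v_R\le u_\delta$ in $\R$ for all $R\ge R_0(\varepsilon)$ and all $\varepsilon>0$. Hence $\sup u_\delta\ge\sup v_R=\widetilde u_{\delta,R}(0)\ge\rho-\varepsilon$ for every $\varepsilon>0$, i.e.\ $\sup u_\delta=\rho$ (recall $u_\delta\le\rho$); since $\delta<\rho$, this is the same as $\|u_\delta\|_{L^\infty(\R)}=\rho$. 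Finally, the strict bounds $-\delta<u_\delta<\rho$ in $\R_+$ come from the strong maximum principle: using $f_\delta(-\delta)=f_\delta(\rho)=0$ and a Lipschitz constant $L$ of $f_\delta$ on $[-\delta,\rho]$, both $u_\delta+\delta\ge 0$ and $\rho-u_\delta\ge 0$ satisfy $(-\Delta)^s w+Lw\ge 0$ in $\R_+$ and are positive outside $\R_+$ (where they equal $\delta$, resp.\ $\rho$), so they cannot vanish at an interior point of $\R_+$.

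The main obstacle I expect is running the sub and supersolution / maximal solution machinery on the unbounded set $\R_+$ with a subsolution $v_R$ that is not compactly supported (it equals $-\delta$ near infinity); this is precisely the subtlety announced in the introduction and dealt with in the Appendix, and it is also the point where one must make sure the constructed $u_\delta$ has the right exterior value $0$ and is maximal with respect to $\rho$ rather than to $v_{R_0}$. Secondary points are the viscosity check for $v_R$ across the junction $x=2R$ (handled above via the global minimum) and the elementary fact that the smallness conditions relating $\delta$, $\rho$ and $F(\rho)$ permit us to assume $\delta<\rho$.
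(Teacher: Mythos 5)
Your proposal is correct and follows essentially the same route as the paper: recenter the one-dimensional ball solution of Lemma \ref{lemma:EBDM} on $(0,2R)$, check it is a viscosity subsolution of \eqref{eq:PBDeltaR} (you supply the global-minimum argument at points $x\ge 2R$ that the paper glosses over with ``it is easily seen''), apply the Appendix sub- and supersolution theorem with supersolution $\rho$ to obtain a maximal solution relative to $\rho$, and conclude $\|u_\delta\|_{L^\infty(\R)}=\rho$ from the independence of $u_\delta$ on $\varepsilon$, with the strict bounds from the strong maximum principle. The only slip is the citation: on the unbounded domain $\R_+$ one must invoke Theorem \ref{theorem:SubSupV} together with Remark \ref{rem-franja}, not Theorem \ref{theorem:SubSup} (stated for bounded domains) --- a point you yourself flag as the main obstacle, so this is a labeling issue rather than a gap.
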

	
	\begin{proof}
	Fix $\varepsilon>0$. By Lemma \ref{lemma:EBDM}, there exists $R>0$ such that 
	problem \eqref{eq:PBD2} with $N=1$ and $B_R=(0,2R)$ admits a maximal viscosity 
	solution $u_R$ which verifies $\rho-\varepsilon \le \| u_R\|_{L^\infty(\R)} <\rho$.
	
	However, it is easily seen that the function $u_R$ is a subsolution of 
	problem \eqref{eq:PBDeltaR}. Thus by Theorem \ref{theorem:SubSupV} in the 
	Appendix (see also Remark \ref{rem-franja}), there exists a maximal solution $u_\delta$ of 
	\eqref{eq:PBDeltaR} relative to $\rho$, which verifies 
	$\rho-\varepsilon \le \| u_\delta\|_{L^\infty(\R)} <\rho$. 
	Since $u_\delta$ does not depend on $\varepsilon$, 
	it immediately follows that 
	$$
	\| u_\delta\|_{L^\infty(\R)} = \rho.
	$$
		Finally, since $f_\delta(-\delta)=f_\delta(\rho)=0$ and $f$ is locally Lipschitz, 
		we deduce from the strong maximum principle that $-\delta<u_\delta(x)<\rho$ in
		$\mathbb{R}_+.$  
	\end{proof}

	\bigskip

	\begin{proof}[Proof of Theorem \ref{theorem:MaximalSolR}]
	Remember that, when $f(0)\ge 0$ we are simply choosing $\delta =0$, so that there 
	exists a solution of \eqref{eq-unidim} by Lemma \ref{lemma:ExistenceDelta}. Therefore, 
	regarding existence, only the case $f(0)<0$ needs to be dealt with. 
			
	By the second part of Remark \ref{remark:Order}, we have that 
	if $\delta_1<\delta_2$ then $u_{\delta_1}\ge u_{\delta_2}$
	in $\mathbb{R}.$ Therefore
	\[
		v(x)\coloneqq \lim_{\delta\to 0^+} u_{\delta}(x)
		=\sup\left\{u_\delta(x)\colon \delta>0 \right\}.
	\]
	Observe that $0\le v\le\rho$ in $B_R$  and $\|v\|_{\scriptstyle L^\infty(\mathbb{R})}=\rho.$ 
	We next prove that $v$ is a solution of \eqref{eq-unidim}. 
	
	Choose $\delta_n\to 0^+$ and let $u_n=u_{\delta_n}$. First, observe that  for any $n\in\mathbb{N}$
		\[
			\|u_n\|_{\scriptstyle L^\infty(\mathbb{R})}\le \rho
			\quad\text{ and }\quad
			\|f_\delta(u_n)\|_{\scriptstyle L^\infty(\mathbb{R}_+)}
			\le\|f\|_{\scriptstyle L^\infty(0,\rho)}.
		\]   
		With the use of standard interior regularity (see for instance Theorem 12.1 in 
		\cite{MR2494809}) we can obtain appropriate interior bounds for the H\"older 
		norms of the solutions. More precisely, for every $b>a>0$ we have 
		\[
			\|u_{n}\|_{C^s[a,b]} \le 
			C\left(\|f_\delta(u_n)\|_{\scriptstyle L^\infty(\mathbb{R}_+)}
			+\|u_n\|_{\scriptstyle L^\infty(\mathbb{R})}\right)
			\le C\left(\|f\|_{\scriptstyle L^\infty(0,\rho)}+\rho
			\right)
		\]
		for some positive constant $C=C(a,b)$. Hence,
		we can conclude that $\{u_n\}_{n\in\mathbb{N}}$ is an  
		equicontinuous and uniformly bounded sequence. It follows that 
		$u_n\to v$ locally uniformly in $\mathbb{R}_+$, so that 
		$v \in C(\R\setminus \{0\})$ and $v=0$ in $(-\infty,0)$. Observe that with 
		this procedure it is not immediate that $v(0)=0$ and $v$ is continuous at zero. 
		However, we can argue as in Theorem \ref{theorem:SubSupV} in the Appendix to 
		obtain that actually $v \in C(\R)$ and $v(0)=0$.
		
		We can now use Lemma 4.7 in \cite{MR2494809}, which shows that $v$ is indeed a viscosity 
		solution of \eqref{eq-unidim} with $0 \le v<\rho.$ 	
	By Theorem \ref{theorem:SubSupV} in the Appendix, there exists a maximal viscosity solution $u\in C(\R)$ 
	of \eqref{eq-unidim}, which of course verifies $0\le u<\rho$ and $\|u\|_{L^\infty(\R)}=\rho$. 
	
	Thus to conclude the proof, only the strict monotonicity of $u$ in $\R_+$ remains to be shown, 
	since it will imply $u>0$ in $(0,+\infty)$. We mention in passing that the monotonicity of 
	$u$ is a consequence of Lemma \ref{lema-monotonia} below, but we are providing an independent 
	proof of this fact. 
	
	Choose $\lambda>0$ and consider the function $v_\delta(x)=u_\delta (x-\lambda)$. It is easily seen that 
	$v_\delta$ is a subsolution of \eqref{eq:PBDeltaR}. By Theorem \ref{theorem:SubSupV} in the Appendix, there exists a 
	solution $w_\delta$ of \eqref{eq:PBDeltaR} verifying $v_\delta \le w_\delta$ in $\R$. Arguing exactly as 
	in the first part of the proof, we can show that $w_\delta \to w$ locally uniformly in 
	$(0,+\infty)$, where $w\in C(\R)$ is a positive viscosity solution of \eqref{eq-unidim}. It follows 
	that 
	$$
	u(x-\lambda )\le w(x) \le u(x) \qquad \hbox{in } \R,
	$$
	since $u$ is the maximal solution. This shows that $u$ is monotone. Moreover, arguing as in Step 3 in 
	the proof of Theorem 1 in \cite{MR3624935}, we can show that $u'>0$ in $(0,+\infty)$, 
	so that $u$ is strictly monotone. The proof is concluded.
	\end{proof}
	
\bigskip	
	
%%%%%%%%%%%%%%%%%%%%%%%%%%%%%%
%				Unicidad
%%%%%%%%%%%%%%%%%%%%%%%%%%%%%%

	\section{Uniqueness}
	\setcounter{equation}{0}
	
		Our main objective in this section is the uniqueness of positive solutions of 
		the one-dimensional problem		
		$$
		\begin{cases}
			(-\Delta)^s u = f(u) &\text{ in } \mathbb{R}_+,\\
			\ \ u=0 &\text{ in } \mathbb{R}\setminus \mathbb{R}_+.
		\end{cases}
		\leqno{\eqref{eq:Problema-unidim}}
	$$
	In the procedure of proving this uniqueness, we will obtain 
		a nonlocal energy for the problem which we believe is interesting in its 
		own right, and could be further exploited to analyze other related one-dimensional 
		problems.
		
	\medskip
	
	\subsection{A nonlocal energy for one-dimensional solutions}

		The following is the main result of this subsection:
	\begin{theorem}\label{thm-pral-energia}
	Assume $f$ is locally Lipschitz and let $u$ be a positive, bounded solution of 
	\eqref{eq-unidim}. Then $u$ is strictly monotone in $(0,+\infty)$. Moreover, for every $a>0$ 
	we have
	$$
	\begin{array}{rl}
	F(u(a)) \hspace{-3mm} & - \ds \frac{c(1,s)}{2} \left( \int_{-\infty}^{+\infty} \frac{(u(a)-u(y))^2}{|a-y|^{1+2s}} dy 
	-(1+2s)\hspace{-1mm} \int_a^{+\infty} \hspace{-2mm} \int_{-\infty}^a \frac{(u(x)-u(y))^2}{|x-y|^{2+2s}} dy dx\right)\\[1.25pc]
	& =F(\rho),
	\end{array}
	$$
	where $\rho=\lim_{x\to +\infty} u(x)$. 
	In addition, if $\ell_0$ is given in \eqref{def-elecero}, then 
	$$
	\ell_0= \frac{(2F(\rho))^\frac{1}{2}}{\Gamma(1+s)}.
	$$
	\end{theorem}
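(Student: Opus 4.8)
The plan is to obtain the energy identity by integrating the equation against $u'$ over a half-line $(a,+\infty)$, mimicking the local computation $E'(x)=u'(x)(-u''(x)-f(u(x)))=0$ but now being very careful with the nonlocal term. First I would record the monotonicity of $u$: this follows from the sliding method exactly as in the existence section (or as a special case of Lemma \ref{lema-monotonia} promised below), so that $u'>0$ on $(0,+\infty)$ and the limit $\rho=\lim_{x\to+\infty}u(x)$ exists and is finite; moreover the regularity results of \cite{MR3168912} give $u\in C^s$ up to $0$, interior smoothness on $(0,+\infty)$, the existence of $\ell_0$ in \eqref{def-elecero}, and enough decay of $u'$ and $u-\rho$ at $+\infty$ to justify the improper integrals. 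The core identity to establish is
$$
u'(x)\,(-\Delta)^s u(x)= -\frac{d}{dx}\left[\frac{c(1,s)}{2}\left(\int_{-\infty}^{+\infty}\frac{(u(x)-u(y))^2}{|x-y|^{1+2s}}\,dy - (1+2s)\int_x^{+\infty}\!\!\int_{-\infty}^x \frac{(u(\xi)-u(y))^2}{|\xi-y|^{2+2s}}\,dy\,d\xi\right)\right]
$$
for $x>0$. Granting this, integrating from $a$ to $+\infty$ and using $\frac{d}{dx}F(u(x))=u'(x)f(u(x))=u'(x)(-\Delta)^su(x)$ yields $F(\rho)-F(u(a))$ equal to the bracketed quantity evaluated at $a$ (the boundary term at $+\infty$ vanishes because $u'\to 0$ and $u-\rho\to 0$ fast enough, and the double integral over $(x,\infty)\times(-\infty,x)$ tends to a finite limit whose $x$-derivative is what appears), which is precisely the asserted formula after rearranging.

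To prove the differentiated identity I would split $(-\Delta)^s u(x)=c(1,s)\,\mathrm{P.V.}\!\int \frac{u(x)-u(y)}{|x-y|^{1+2s}}dy$ and compute $\frac{d}{dx}$ of the ``diagonal'' singular integral $D(x):=\int\frac{(u(x)-u(y))^2}{|x-y|^{1+2s}}dy$. Differentiating under the integral, the $\partial_x$ hitting the numerator produces $2u'(x)\int\frac{u(x)-u(y)}{|x-y|^{1+2s}}dy = \frac{2}{c(1,s)}u'(x)(-\Delta)^su(x)$, which is the term we want; the $\partial_x$ hitting the kernel $|x-y|^{-1-2s}$ produces $-(1+2s)\int\frac{(u(x)-u(y))^2\,\mathrm{sgn}(x-y)}{|x-y|^{2+2s}}dy$, and one checks that this equals $(1+2s)\frac{d}{dx}\left(\int_x^{+\infty}\!\!\int_{-\infty}^x\frac{(u(\xi)-u(y))^2}{|\xi-y|^{2+2s}}\,dy\,d\xi\right)$ by differentiating the double integral in its two limits (the derivative in the upper limit of the inner family and the lower limit of the outer one combine to give exactly $\pm\int\frac{(u(x)-u(y))^2}{|x-y|^{2+2s}}\mathrm{sgn}(\cdots)\,dy$, with the apparently divergent coincident contributions cancelling). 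Collecting terms gives the claimed $\frac{d}{dx}$ identity. The constant on the right is pinned down by letting $a\to+\infty$, where $F(u(a))\to F(\rho)$ and all the nonlocal terms vanish.

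Finally, for the value of $\ell_0$ I would let $a\to 0^+$ in the energy identity. Since $u(a)\to 0$, we get $F(0)=0$ on the left from the first term, and the task reduces to computing $\lim_{a\to 0^+}$ of the nonlocal bracket, which must therefore equal $-F(\rho)$. Using $u(x)\sim \ell_0 x^s$ near $0$ and $u\equiv 0$ on $(-\infty,0)$, the dominant contribution as $a\to 0^+$ comes from the region $y<0$ in $D(a)$ and from the region $y<0<\xi$ in the double integral, and both scale out $\ell_0^2$ against an explicit convergent integral; the resulting constant is a Beta-function integral which evaluates (via $c(1,s)=4^s s(1-s)\pi^{-1/2}\Gamma(s+\tfrac12)/\Gamma(2-s)$ and standard $\Gamma$ identities) to give $\frac{\ell_0^2}{2}\,(\Gamma(1+s))^2 = F(\rho)$, i.e. \eqref{eq:elecero}. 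The main obstacle throughout is the rigorous manipulation of the singular and improper integrals — justifying differentiation under the integral sign near the diagonal, showing the coincident-singularity contributions cancel rather than diverge, and controlling the behaviour at $+\infty$ and at $0$ — for which the sharp regularity of \cite{MR3168912} (boundary expansion $u(x)=\ell_0 x^s+o(x^s)$ with matching derivative bounds) and the monotonicity of $u$ are the essential inputs.
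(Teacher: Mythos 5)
Your treatment of the monotonicity and of the energy identity itself is essentially the paper's route: the paper also obtains the formula by integrating $u'(x)(-\Delta)^su(x)$ against the nonlocal term, except that instead of differentiating the energy pointwise it truncates (parameters $\delta$ and $M$ in Lemma \ref{lema-integr}), uses Green's theorem on the truncated region, and then passes to the limit; in particular the boundary term at infinity is killed using only $u'\in L^1$ near $+\infty$ (monotonicity plus boundedness) and $\|u\|_{L^\infty}$, because no decay rate of $\rho-u$ is available in general (one expects $\rho-u\sim x^{-2s}$), so your appeal to ``fast enough'' decay of $u-\rho$ should be replaced by an estimate of that type. Note also a sign slip in your core identity: since the theorem asserts that $F(u(x))-\frac{c(1,s)}{2}\bigl(D(x)-(1+2s)\iint\bigr)$ is constant, the correct pointwise statement is $u'(x)(-\Delta)^su(x)=+\frac{d}{dx}\bigl[\frac{c(1,s)}{2}\bigl(D(x)-(1+2s)\iint\bigr)\bigr]$; with your minus sign the integrated formula comes out with the wrong sign and does not ``rearrange'' into the claim.

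The genuine gap is in the identification of $\ell_0$. First, the limit $a\to 0^+$ does not localize where you say it does: the portion of the double integral with $y<0<x$ equals $\frac{1}{1+2s}\int_a^{+\infty}u(x)^2x^{-1-2s}\,dx$, which diverges logarithmically as $a\to0^+$ (since $u(x)\sim \ell_0x^s$) and only becomes finite after being combined with the $y>0$ part of $\int\frac{(u(a)-u(y))^2}{|a-y|^{1+2s}}dy$; moreover the region $0<y<a<x$ contributes at the same order $\ell_0^2$ (this is the term $F_3$ in \eqref{def-efes}). Carrying out the limit correctly, as in Lemma \ref{lema-constante-1}, gives $F(\rho)=\mathcal{K}(s)\ell_0^2$ with $\mathcal{K}(s)$ the combination of integrals in \eqref{Csb} — which is not a Beta integral and is never evaluated directly in the paper. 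The identity $\mathcal{K}(s)=\Gamma(1+s)^2/2$ is proved indirectly in Lemma \ref{calculo-constante}: for $s>\frac12$ one builds an auxiliary maximal solution of $(-\Delta)^su=\lambda(1-u)$ with a subsolution $1-x^{-2s}$ guaranteeing $f(u)\in L^1(\R_+)$, so that the Pohozaev-type identity of \cite{MR3211861} applies and pins the constant; the range $0<s\le\frac12$ is then reached by showing $\mathcal{K}$ is analytic in the strip $0<\text{Re}(s)<1$ and invoking the identity principle. Your single sentence asserting that the limiting constant ``is a Beta-function integral which evaluates'' to $\Gamma(1+s)^2/2$ supplies no computation for what is the most delicate step of the theorem; unless you can actually produce that closed-form evaluation (which the authors evidently could not, and which your incorrect localization would in any case get wrong), this part of the argument is missing.
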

	
	\bigskip
	
	\begin{remark}{\rm
	It can be seen with a little effort that the energy given by Theorem \ref{thm-pral-energia} 
	converges, as $s\to 1^-$, to the usual one for the local problem $E(x):=u'(x)^2/2+F(u(x))$.
	}\end{remark}
	
	The proof of Theorem \ref{thm-pral-energia} will be split in several lemmas for convenience. We begin by 
	showing the monotonicity of solutions of \eqref{eq:Problema-unidim}. We remark that the main result in 
	\cite{MR3624935} could be easily modified to include the case $N=1$. If we adapted the proof presented in this work we notice that the additional hypothesis $f\in C^1$, required there to obtain the monotonicity property of the solutions, will be not needed in the simpler situation of dimension one. However we give here an alternative proof that avoids the introduction of the notation established in \cite{MR3624935} regarding Green's function in half-spaces.
	
	\begin{lemma}\label{lema-monotonia}
		Assume $f$ is locally Lipschitz and let $u$ be a positive, bounded solution of 
		\eqref{eq-unidim}. Then $u'>0$ in $(0,+\infty)$.
	\end{lemma}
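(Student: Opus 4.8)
The plan is to prove Lemma \ref{lema-monotonia} via a sliding argument in the $x_N$ direction (here, simply the $x$ direction), exploiting the maximality structure already built into the construction of solutions and the strong maximum principle for $(-\Delta)^s$. First I would introduce, for $\lambda>0$, the translated function $u_\lambda(x):=u(x+\lambda)$ and the difference $w_\lambda:=u_\lambda-u$, defined on $\mathbb{R}_+$. Since $u=0$ on $\mathbb{R}\setminus\mathbb{R}_+$ and $u>0$ in $\mathbb{R}_+$, we have $w_\lambda\ge 0$ on $\mathbb{R}\setminus\mathbb{R}_+$ (in fact $w_\lambda=u_\lambda\ge 0$ there since $u_\lambda$ is a nonnegative shift of $u$, which is itself nonnegative on all of $\mathbb{R}$ up to the boundary value $0$). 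The key structural fact is that $v_\lambda(x):=u(x-\lambda)$ extended by $0$ is a subsolution of the problem on $\mathbb{R}_+$ — as already observed in the proof of Theorem \ref{theorem:MaximalSolR} — so that sliding produces a comparison $u(x-\lambda)\le u(x)$, i.e. $u$ is nondecreasing. To upgrade this to strict monotonicity, I would then argue that $w_\lambda\ge 0$ is a (weak/viscosity) supersolution of a linear equation $(-\Delta)^s w_\lambda = c_\lambda(x) w_\lambda$ in $\mathbb{R}_+$, where $c_\lambda(x)=\frac{f(u_\lambda(x))-f(u(x))}{u_\lambda(x)-u(x)}$ is bounded (locally) by the local Lipschitz constant of $f$ on the range of $u$; hence by the strong maximum principle for the fractional Laplacian, either $w_\lambda\equiv 0$ or $w_\lambda>0$ in $\mathbb{R}_+$.

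The next step is to rule out $w_\lambda\equiv 0$. If $u(x+\lambda)=u(x)$ for all $x>0$ and some $\lambda>0$, then $u$ would be $\lambda$-periodic on $(0,+\infty)$; combined with $u>0$ and boundedness this forces $\inf_{x>0}u>0$, contradicting $\lim_{x\to 0^+}u(x)=0$ (which holds since $u\in C^s(\mathbb{R})$ with $u=0$ on $\mathbb{R}\setminus\mathbb{R}_+$, and in particular $u(0)=0$). So $w_\lambda>0$ in $\mathbb{R}_+$ for every $\lambda>0$, meaning $u$ is strictly increasing: $u(x_2)>u(x_1)$ whenever $x_2>x_1>0$. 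Finally, to pass from strict monotonicity to $u'>0$ pointwise, I would invoke the regularity theory (Proposition 1.1 in \cite{MR3168912}, already cited in the excerpt, together with the interior Schauder-type estimates of \cite{MR2494809,MR2781586}): since $f(u)$ is bounded and $u$ is bounded, $u$ is $C^{1,\alpha}$ (indeed better) in the interior $(0,+\infty)$, so $u'$ exists and is continuous there. If $u'(x_0)=0$ at some interior point $x_0$, then because $u$ is nondecreasing, $x_0$ is a candidate for a degenerate critical point; one then applies the strong comparison / Hopf-type machinery to $w_\lambda$ — more precisely, a Hopf lemma for the fractional Laplacian applied at the boundary, or a direct argument: differentiating the sliding inequality $u(x-\lambda)\le u(x)$ in $\lambda$ would suggest $u'\ge 0$, and strict positivity of $w_\lambda$ for all $\lambda$ prevents $u'$ from vanishing on any set of positive measure; a limiting argument as $\lambda\to 0^+$ together with $C^1$ regularity then gives $u'>0$ everywhere.

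Alternatively — and this may be the cleaner route, since the excerpt explicitly says ``arguing as in Step 3 in the proof of Theorem 1 in \cite{MR3624935}, we can show that $u'>0$'' — I would directly reduce to that argument: once strict monotonicity of $u$ is established, the function $\partial_x u$ (which exists by interior regularity) is a nonnegative solution of the linearized equation $(-\Delta)^s(\partial_x u)=f'(u)\,\partial_x u$ in $(0,+\infty)$ (valid since $f$ is, say, differentiable a.e. along the range of the $C^1$ function $u$; if only locally Lipschitz one works with difference quotients and passes to the limit), and the strong maximum principle forbids $\partial_x u$ from having an interior zero unless $\partial_x u\equiv 0$, which is excluded by strict monotonicity.

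The main obstacle I anticipate is the non-compact support of the sliding subsolution $v_\lambda(x)=u(x-\lambda)$: the standard sliding method of \cite{MR1159383} is usually applied with compactly supported (or decaying) competitors so that the ``first touching point'' argument localizes, whereas here $u\to\rho>0$ at infinity, so one must control the behaviour at $x=+\infty$ to start the sliding (showing $u(x-\lambda)\le u(x)$ is not violated near infinity, using $u(\pm\infty)=\rho$ and the monotone-in-$\lambda$ maximal-solution structure) and near $x=0$ (using $u(0)=0$). Handling these two ``ends'' — and ensuring the strong maximum principle / Hopf lemma for $(-\Delta)^s$ applies on the unbounded domain $\mathbb{R}_+$ with the nonlocal term contributing the correct sign from the exterior data — is where the bulk of the care goes; the interior strong maximum principle and the linearization are comparatively routine.
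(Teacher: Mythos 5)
Your argument has a genuine gap at its very first and most important step: the comparison $u(x-\lambda)\le u(x)$. In the proof of Theorem \ref{theorem:MaximalSolR} this inequality was obtained \emph{only} for the maximal solution, by producing a solution $w$ above the subsolution $u(\cdot-\lambda)$ and then invoking maximality to get $w\le u$. Lemma \ref{lema-monotonia}, however, concerns an \emph{arbitrary} positive bounded solution of \eqref{eq-unidim} — and this generality is essential, since the lemma feeds into the energy identity and hence into the uniqueness theorem, so one cannot presuppose that $u$ is the maximal solution without becoming circular (the paper explicitly remarks that it is giving a proof of monotonicity independent of that construction). For a general solution, the subsolution property of $u(\cdot-\lambda)$ gives no comparison with $u$: there is no comparison principle for the semilinear problem placing a subsolution below an arbitrary solution. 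Moreover, you cannot initialize the translation-sliding either: at this stage nothing is known about the behaviour of a general solution at $+\infty$ (the limit $u(+\infty)=\rho$ is itself deduced from monotonicity later), and the region where the inequality $u(x-\lambda)\le u(x)$ must be established is the whole half-line, which is not narrow, so the narrow-domain maximum principle cannot get the argument started.

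The paper circumvents exactly this difficulty by using the moving planes (reflection) method rather than translations: for $\lambda>0$ it compares $u(2\lambda-x)$ with $u(x)$ on the bounded interval $(0,\lambda)$, which is narrow for small $\lambda$, so the maximum principle in narrow domains starts the procedure; the critical position $\lambda^*<+\infty$ is then excluded because it would force $u$ to be symmetric about $x=\lambda^*$, contradicting the non-symmetry result (Theorem 8 of \cite{MR3624935}), whence $\lambda^*=+\infty$ and $u$ is nondecreasing. Your final step (passing from monotonicity to $u'>0$ via the argument of Step 3 of Theorem 1 in \cite{MR3624935}) matches the paper and is fine, and your exclusion of the periodic alternative is correct, but the monotonicity step as you propose it either only covers the maximal solution or assumes information ($u(+\infty)=\rho$, a usable comparison at infinity) that is not available for a general solution at this point of the paper.
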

	
	\begin{proof}[Sketch of proof]
	The proof follows with the use of the moving planes method. 
	We borrow the notation from \cite{MR3624935}, which is for the most part standard.
	For $\lambda>0$, let 
	$$
	\begin{array}{l}
	\Sigma_\lambda :=(0,\lambda)\\[.5pc]
	x^\lambda:=2\lambda-x \ \hbox{(the reflection of }x \hbox{ with respect to the point } \lambda)\\[0.5pc]
	w_\lambda(x)= u(x^\lambda)-u(x), \quad x\in \R\\[0.5pc]
	D_\lambda=\{x\in \Sigma_\lambda: \ w_\lambda(x)<0\}\\[0.5pc]
	v_\lambda = w_\lambda \chi_{D_\lambda}.	
	\end{array}
	$$
	Observe that by Lemma 5 in \cite{MR3624935} we obtain $(-\Delta)^s v_\lambda \ge L v_\lambda$ in 
	the viscosity sense in $D_\lambda$, while $v_\lambda=0$ outside $D_\lambda$. Here $L$ stands for the Lipschitz 
	constant of $f$ in the interval $[0,\|u\|_{L^\infty(\R)}$]. 
	
	As a consequence of the maximum principle in narrow domains (which follows for instance 
	from Theorem 2.4 in \cite{MR3311908}) we deduce that $D_\lambda=\emptyset$ if $\lambda$ is small enough. 
	Thus $w_\lambda\ge 0$ in $\Sigma_\lambda$ if $\lambda$ is small. Define
	$$
	\lambda^*=\sup\{ \lambda>0: w_\lambda\ge 0 \hbox{ in } \Sigma_\lambda\}.
	$$
	If we assume that $\lambda^*<+\infty$, then there exist sequences $\lambda_n \downarrow \lambda^*$ and 
	$x_n \in [0,\lambda_n]$ such that $w_{\lambda_n}(x_n)<0$. The maximum principle in narrow 
	domains also implies that the points $x_n$ can be chosen indeed in some interval $[\delta,\lambda^*-\delta]$. 
	Thus we may assume $x_n\to x_0\in [\delta,\lambda^*-\delta]$. 
	
	Passing to the limit we see that $w_{\lambda^*}\ge 0$ in $[0,\lambda^*]$, with $w_{\lambda^*}(x_0)=0$. 
	The strong maximum principle then gives $w_{\lambda^*}\equiv 0$ in $[0,\lambda^*]$, that is, $u$ is 
	symmetric with respect to the point $x=\lambda^*$. However, this contradicts Theorem 8 in 
	\cite{MR3624935}, whose proof can be seen to be valid when $N=1$ as well. 
	
	The contradiction shows that $\lambda^*=+\infty$, that is, $w_{\lambda}\ge 0$ in $[0,\lambda]$ 
	for every $\lambda>0$. Thus $u$ is nondecreasing. Finally, arguing as in Step 3 in the proof of 
	Theorem 1 in \cite{MR3624935}, we see that $u'>0$ in $(0,+\infty)$, as wanted.
	\end{proof}
	
	\bigskip
	
	Next, we will give the first step in obtaining our energy. The following result is somehow 
		related to the ones obtained in \cite{MR3211861} regarding Pohozaev's identity for the fractional laplacian. 
	
	\begin{lemma}\label{lema-integr}
	Let $u\in C(\R) \cap L^\infty(\R) \cap C^1(0,+\infty)$ be such that $u' \in L^1(b_0,+\infty)$ for some $b_0>0$ and 
	$\| u\|_{C^{2s+\beta}[b,+\infty)}$ is finite for every $b>0$ and some $\beta\in (0,1)$. Then 
	\begin{equation}\label{ener-1}
	\begin{array}{rl}
	\ds \int_a^{+\infty} u'(x) (-\Delta)^s u(x) dx \hspace{-2mm} & = 
	- \ds \frac{c(1,s)}{2} \left( \int_{-\infty}^{+\infty} \frac{(u(a)-u(y))^2}{|a-y|^{1+2s}} dy \right.\\[1.25pc]
	& \ds \left. -(1+2s)\int_a^{+\infty} \hspace{-2mm} \int_{-\infty}^a \frac{(u(x)-u(y))^2}{|x-y|^{2+2s}} dy dx\right)
	\end{array}
	\end{equation}
	for every $a>0$. The first integral above is absolutely convergent. In particular, if $u$ is a positive 
	bounded solution of \eqref{eq-unidim} with a locally Lipschitz $f$ then 
	\begin{equation}\label{ener-2}
	\begin{array}{rl}
	F(u(a)) \hspace{-3mm} & - \ds \frac{c(1,s)}{2} \left( \int_{-\infty}^{+\infty} \frac{(u(a)-u(y))^2}{|a-y|^{1+2s}} dy 
	-(1+2s)\hspace{-1mm}\int_a^{+\infty}\hspace{-2mm} \int_{-\infty}^a \frac{(u(x)-u(y))^2}{|x-y|^{2+2s}} dy dx\right)\\[1.25pc]
	& =F(\rho),
	\end{array}
	\end{equation}
	for every $a>0$, where $\rho=\lim_{x\to +\infty} u(x)$ and $F$ is a primitive of $f$.
	\end{lemma}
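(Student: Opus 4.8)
The plan is to establish \eqref{ener-1} by a direct integration of the quantity $u'(x)(-\Delta)^s u(x)$ over a finite interval $(a,T)$ and then let $T\to+\infty$, carefully tracking each boundary term. Writing $(-\Delta)^s u(x) = c(1,s)\,\mathrm{P.V.}\!\int_{\R} \frac{u(x)-u(y)}{|x-y|^{1+2s}}\,dy$ and multiplying by $u'(x)$, one recognizes the integrand as a (partial) derivative in $x$. The key algebraic identity is that, formally,
\[
u'(x)\,(u(x)-u(y)) = \frac{1}{2}\,\partial_x\!\left((u(x)-u(y))^2\right),
\]
so that integrating by parts in $x$ moves the $x$-derivative onto the kernel $|x-y|^{-(1+2s)}$, producing the exponent $2+2s$ and the factor $(1+2s)$ appearing in \eqref{ener-1}. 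The single integral $\int_{-\infty}^{+\infty}\frac{(u(a)-u(y))^2}{|a-y|^{1+2s}}dy$ arises as the boundary contribution at $x=a$, while the boundary contribution at $x=T\to+\infty$ must be shown to vanish. The absolute convergence of the first integral on the right of \eqref{ener-1} uses that $u\in C^{2s+\beta}$ near $a$ (controlling the singularity at $y=a$ when $2s\ge 1$) together with boundedness of $u$ (controlling $|y|\to\infty$).

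The main technical obstacle will be justifying the interchange of the $x$-integration with the principal-value $y$-integration and controlling the limit as $T\to+\infty$. For this I would regularize: replace the principal value by an integral over $\{|x-y|>\varepsilon\}$, perform Fubini and the integration by parts on the regularized expression over $(a,T)$ — where everything is absolutely convergent by the assumed $C^{2s+\beta}$ bounds and the integrability of $u'$ on $(b_0,+\infty)$ — and then send $\varepsilon\to 0^+$. One checks that the $\varepsilon$-neighbourhood of the diagonal contributes a term tending to zero, using the Hölder modulus of continuity of $u$ (the exponent $2s+\beta>2s$ is exactly what is needed for the diagonal term $\sim \varepsilon^{-1-2s}|u(x)-u(y)|^2$ to be negligible). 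The tail at $x=T$ is handled by writing the boundary term as $\frac12\int_{\R}\frac{(u(T)-u(y))^2}{|T-y|^{1+2s}}dy$ and splitting into $|y-T|\le T/2$, where $u(T)-u(y) = \int_y^T u'$ is small by $u'\in L^1(b_0,+\infty)$, and $|y-T|>T/2$, where boundedness of $u$ and the decay of the kernel suffice; since $u$ has a limit $\rho$ at $+\infty$ (by the monotonicity from Lemma \ref{lema-monotonia}, $u$ being bounded and increasing), this boundary term indeed tends to $0$.

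Once \eqref{ener-1} is established, \eqref{ener-2} follows by specialising $u$ to a positive bounded solution of \eqref{eq-unidim}. By Lemma \ref{lema-monotonia}, $u$ is strictly increasing, hence $u'>0$ and $\rho:=\lim_{x\to+\infty}u(x)$ exists; the interior regularity theory of \cite{MR2494809,MR3168912} gives the required $C^{2s+\beta}$ bounds on $[b,+\infty)$ for every $b>0$, and $u'\in L^1(b_0,+\infty)$ follows since $\int_{b_0}^{T} u' = u(T)-u(b_0)\le \rho$ for all $T$, using $u'\ge 0$. Then on $\R_+$ one has $(-\Delta)^s u = f(u)$, so
\[
\int_a^{+\infty} u'(x)(-\Delta)^s u(x)\,dx = \int_a^{+\infty} u'(x) f(u(x))\,dx = \lim_{T\to+\infty}\big(F(u(T))-F(u(a))\big) = F(\rho)-F(u(a)),
\]
by continuity of $F$. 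Combining this with \eqref{ener-1} and rearranging yields \eqref{ener-2}. I expect the monotonicity input and the regularity citations to be routine; the genuine work is the diagonal regularization and the vanishing of the boundary term at infinity described above.
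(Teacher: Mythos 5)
Your proposal follows essentially the same route as the paper's proof: the same identity $u'(x)\,(u(x)-u(y))=\tfrac12\,\partial_x\big((u(x)-u(y))^2\big)$, a truncation in $x$ together with a cut-off $|x-y|>\varepsilon$ near the diagonal, the boundary contribution at $x=a$ producing the single integral, the vanishing of the contribution at $x=T$ via $u'\in L^1(b_0,+\infty)$ plus boundedness, the H\"older regularity controlling the diagonal, and the same specialization to solutions to get \eqref{ener-2}. The only point your sketch glosses over is why the differentiated-kernel term $\tfrac{1+2s}{2}\iint (x-y)(u(x)-u(y))^2|x-y|^{-3-2s}$ survives only on $\{y<a<x\}$: the paper truncates $y$ symmetrically to $[-M,M]$ so that this antisymmetric integrand cancels exactly on the symmetric part of the region where $y\ge a$, whereas with your one-sided truncation the cancellation leaves an extra piece over $\{a<x<T<y\}$, which must (and can) be shown to vanish by the same $u'\in L^1$ argument you invoke for the boundary term at $x=T$.
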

	
	\begin{proof}
	Fix $a>0$ and choose $\delta$ and $M$ with the restrictions $0<\delta<a$ and $M>a+\delta$. We first 
	consider the integral
	\begin{equation}\label{Idelta}
	I_{\delta,M}= \int_a^M u'(x) \int^M \limits_{-\text{\em\scriptsize M} \atop {|y-x|\ge \delta}} \frac{u(x)-u(y)}{|x-y|^{1+2s}} dy dx 
	= \iint_{A_{\delta,M}}u'(x) \frac{u(x)-u(y)}{|x-y|^{1+2s}} dy dx,
	\end{equation}
	where $A_{\delta,M}=([a,M]\times [-M,M]) \cap \{(x,y)\in \R^2:\ |y-x|\ge \delta\}$ (see Figure 1).
	It is not hard to see that
	\begin{align*}
	I_{\delta,M} & = \frac{1}{2} \iint_{A_{\delta,M}} \frac{\left((u(x)-u(y))^2\right)_x }{|x-y|^{1+2s}}dy dx\\
	& =\frac{1}{2} \iint_{A_{\delta,M}} \hspace{-2mm} \left(\frac{(u(x)-u(y))^2}{|x-y|^{1+2s}}\right)_x 
	\hspace{-1mm} dy dx + \frac{1+2s}{2} \hspace{-1mm} 
	\iint_{A_{\delta,M}} \hspace{-3mm} \frac{(x-y) (u(x)-u(y))^2}{|x-y|^{3+2s}} dy dx.
	\end{align*}
	
	\psfrag{A}{$A_{\delta,M}^1$}
	\psfrag{B}{$A_{\delta,M}^2$}
	\psfrag{C}{$A_{\delta,M}^3$}
	\psfrag{y}{$y=x$}
	\psfrag{a}{$a$}
	\psfrag{M}{$M$}
	\psfrag{-M}{$-M$}
	
	\smallskip
	
	\begin{center}
	\includegraphics[width=5.5cm]{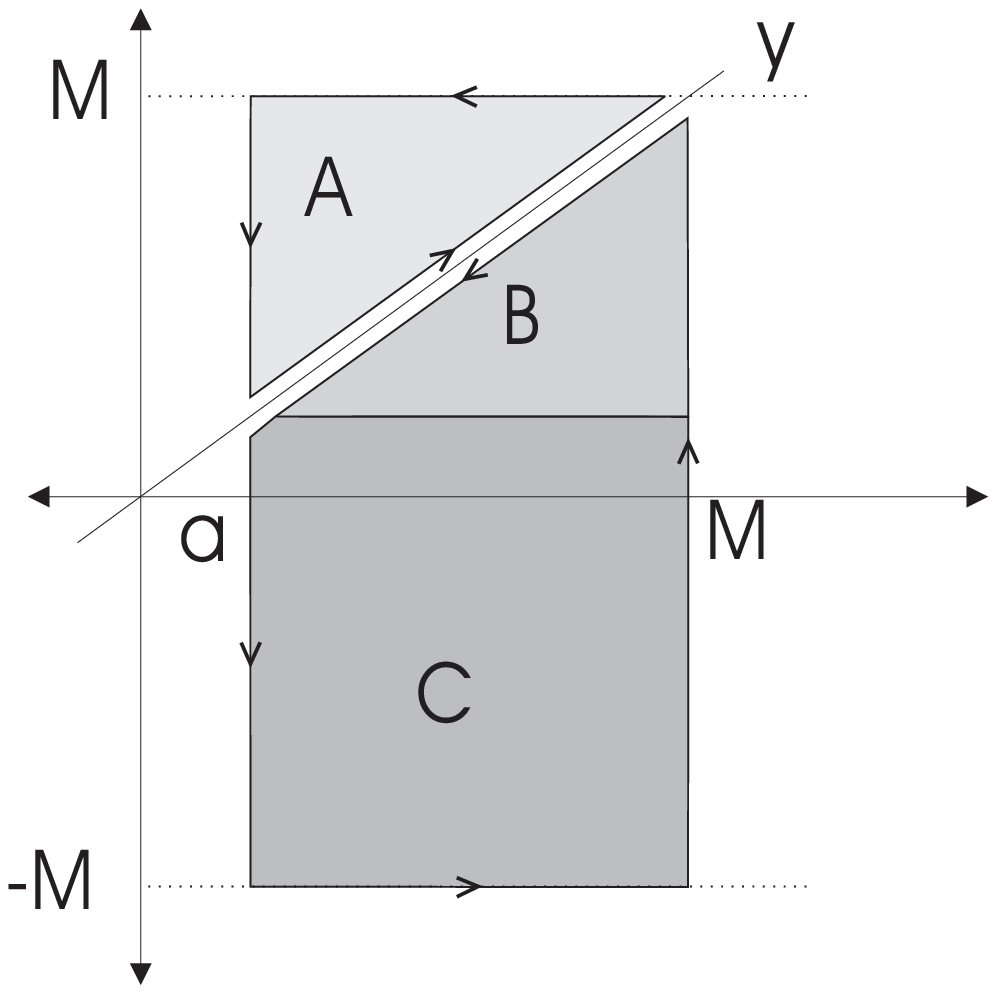}
	\end{center}
	
	\centerline{{\sc Figure 1.} The region $A_{\delta,M}$ and its subregions.}
	
	\bigskip

	We now split $A_{\delta,M}=A_{\delta,M}^1 \cup A_{\delta,M}^2 \cup A_{\delta,M}^3$, where
	$$
	\begin{array}{l}
	A_{\delta,M}^1= \{(x,y) \in A_{\delta,M}: y \ge x+\delta\}\\[0.25pc]
	A_{\delta,M}^2= \{(x,y) \in A_{\delta,M}: a\le y \le x-\delta\}\\[0.25pc]
	A_{\delta,M}^3= \{(x,y) \in A_{\delta,M}: y\le a\}.
	\end{array}
	$$
	Since the region $A_{\delta,M}^1$ is the reflection of $A_{\delta,M}^2$ with 
	respect to the line $y=x$ and the integrand in the last integral above is antisymmetric, we 
	immediately deduce that 	
	\begin{align*}
	I_{\delta,M} & = \frac{1}{2}\iint_{A_{\delta,M}} \left(\frac{(u(x)-u(y))^2}{|x-y|^{1+2s}}\right)_x dy dx + 
	\frac{1+2s}{2}  \iint_{A_{\delta,M}^3} \frac{(u(x)-u(y))^2}{(x-y)^{2+2s}} dy dx\\
	& =\frac{1}{2}\oint_{\partial A_{\delta,M}} \frac{(u(x)-u(y))^2}{|x-y|^{1+2s}}dy+ \frac{1+2s}{2} \iint_{A_{\delta,M}^3} 
	\frac{(u(x)-u(y))^2}{(x-y)^{2+2s}} dy dx.
	\end{align*}
	We have made use of Green's formula, hence the line integral is to be taken in the positive sense. Parameterizing 
	the line integral we deduce
	\begin{align}\label{eq-energia-1}
	I_{\delta,M} & = -\frac{1}{2}\int^M \limits_{-\text{\em\scriptsize M}  \atop |y-a|\ge \delta} \frac{(u(a)-u(y))^2}{|a-y|^{1+2s}} dy 
	+\frac{1}{2} \int_{-M}^{M-\delta} \frac{(u(M)-u(y))^2}{|M-y|^{1+2s}} dy \nonumber \\
	& + \frac{1}{2}\int_a^{M-\delta} \frac{(u(x)-u(x+\delta))^2}{\delta^{1+2s}} dx 
	-\frac{1}{2}\int_a^{M} \frac{(u(x)-u(x-\delta))^2}{\delta^{1+2s}} dx \nonumber \\[0.25pc]
	& + \frac{1+2s}{2} \iint_{A_{\delta,M}^3} 
	\frac{(u(x)-u(y))^2}{(x-y)^{2+2s}} dy dx\nonumber \\
	& =  -\frac{1}{2}\int^M \limits_{-\text{\em\scriptsize M} \atop |y-a|\ge \delta} \frac{(u(a)-u(y))^2}{|a-y|^{1+2s}} dy 
	+\frac{1}{2} \int_{-M}^{M-\delta} \frac{(u(M)-u(y))^2}{|M-y|^{1+2s}} dy \nonumber \\
	& - \frac{1}{2}\int_{a-\delta}^a  \frac{(u(x+\delta)-u(x))^2}{\delta^{1+2s}} dx 
		 + \frac{1+2s}{2} \iint_{A_{\delta,M}^3} \frac{(u(x)-u(y))^2}{(x-y)^{2+2s}} dy dx. \nonumber\\
	& =:I_1+I_2+I_3+I_4.
	\end{align}
	The next step is to let $M\to +\infty$ in \eqref{eq-energia-1}. Since $u$ is bounded 
	we may easily pass to the limit in $I_{\delta, M}$, given in \eqref{Idelta}, $I_1$ and $I_4$ by simply using dominated convergence. As for $I_2$, we claim that it goes to zero as $M\to +\infty$. 
	
	To prove this claim, choose $M_0>a$ and let $M>M_0+\delta$. Then we can write, with the use of 
	the fundamental theorem of calculus and Fubini's theorem:
	\begin{align*}
	\int_{M_0}^{M-\delta} \frac{(u(M)-u(y))^2}{(M-y)^{1+2s}} dy & \le 
	2 \| u \|_{L^\infty(\R_+)} \int_{M_0}^{M-\delta} \frac{|u(M)-u(y)|}{(M-y)^{1+2s}} dy \\
	& \le 2 \| u \|_{L^\infty(\R_+)} \int_{M_0}^{M-\delta} \int_y ^M \frac{|u'(\xi)|}{(M-y)^{1+2s}} d\xi dy\\
	& \le 2 \| u \|_{L^\infty(\R_+)} \int_{M_0}^{M-\delta} \int_{M_0} ^M \frac{|u'(\xi)|}{(M-y)^{1+2s}} d\xi dy\\
	& = 2 \| u \|_{L^\infty(\R_+)} \int_{M_0} ^M \int_{M_0}^{M-\delta}\frac{|u'(\xi)|}{(M-y)^{1+2s}} dy d\xi \\
	& \le \frac{\| u \|_{L^\infty(\R_+)}}{s\delta^{2s}} \int_{M_0} ^{+\infty} |u'(\xi)| d\xi.
	\end{align*}
	On the other hand, 
	\begin{align*}
	\int_{-M}^{M_0} \frac{(u(M)-u(y))^2}{(M-y)^{1+2s}} dy & \le 4 \|u\|_{L^\infty(\R_+)}^2 \int_{-M}^{M_0} 
	\frac{dy}{(M-y)^{1+2s}} dy\\
	& = \frac{2}{s} \|u\|_{L^\infty(\R_+)}^2 (M-M_0)^{-2s}.
	\end{align*}
	Hence
	$$
	I_2 \le 	\frac{\| u \|_{L^\infty(\R_+)}}{s\delta^{2s}} \int_{M_0} ^{+\infty} |u'(\xi)| d\xi+
	\frac{2}{s} \|u\|_{L^\infty(\R_+)}^2 (M-M_0)^{-2s}.
	$$
	Letting $M\to +\infty$ and then $M_0\to +\infty$, we see that the integral goes to zero, as required. 
	Passing to the limit in \eqref{eq-energia-1} and using dominated convergence we see that
	\begin{align}\label{eq-energia-2}
	\int_a^{+\infty} u'(x) \int_{|y-x|\ge \delta} \frac{u(x)-u(y)}{|x-y|^{1+2s}} dy dx 
	& = -\frac{1}{2}\int_{|y-a|\ge \delta} \frac{(u(a)-u(y))^2}{|a-y|^{1+2s}} dy \nonumber \\
	& - \frac{1}{2}\int_{a-\delta}^a  \frac{(u(x+\delta)-u(x))^2}{\delta^{1+2s}} dx \\
	& + \frac{1+2s}{2} \iint_{A_\delta} \frac{(u(x)-u(y))^2}{(x-y)^{2+2s}} dy dx, \nonumber 
	\end{align}
	where $A_\delta=([a,+\infty)\times (-\infty,a]) \cap \{(x,y)\in \R^2: y \le x-\delta\}$. 
	
	The final step will be to pass to the limit as $\delta \to 0$ in \eqref{eq-energia-2}. Observe 
	that, since $u\in C^1 (0,+\infty)$, we have for $y$ close to $a$
	$$
	\frac{(u(a)-u(y))^2}{|a-y|^{1+2s}} \le C |a-y|^{1-2s}\in L^1_{\rm loc}(\R),
	$$
	so the passing to the limit is justified in the first integral in the right-hand side of 
	\eqref{eq-energia-2} by dominated convergence. As for the second integral, we see that, 
	also because of the regularity of $u$:
	$$
	\int_{a-\delta}^a \frac{(u(x+\delta)-u(x))^2}{\delta^{1+2s}} dx \le C\delta^{2-2s}\to 0
	$$
	as $\delta \to 0^+$. As for the double integral, it also follows that 
	$$
	\frac{(u(x)-u(y))^2}{|x-y|^{2+2s}} \le C |x-y|^{-2s}\in L^1_{\rm loc}(\R^2),
	$$
	for $x$ and $y$ close to $a$. Therefore, we are allowed to pass to the limit in the 
	right-hand side of \eqref{eq-energia-2}. 
	
	However, the left-hand side of \eqref{eq-energia-2} has to be treated with a little more care, 
	although in a standard way. By dominated convergence, it suffices to show that
	\begin{equation}\label{eq-energia-3}
	\left| \int_{|y-x|\ge \delta} \frac{u(x)-u(y)}{|x-y|^{1+2s}} dy \right| 
	\le C
	\end{equation}
	for some positive constant $C$ and every $x>a$. First, notice that for $\delta<\frac{a}{2}$:
	\begin{align*}
	\int_{|y-x|\ge \delta} \frac{u(x)-u(y)}{|x-y|^{1+2s}} dy & = 
	\frac{1}{2} \int_{|z|\ge \delta} \frac{2u(x)-u(x+z)-u(x-z)}{|z|^{1+2s}} dz\\
	& = \frac{1}{2} \left(\int_{\delta \le |z| \le \frac{a}{2}} + \int_{|z|>\frac{a}{2}} \right)
	\frac{2u(x)-u(x+z)-u(x-z)}{|z|^{1+2s}} dz.
	\end{align*}
	The absolute value of the second of these integrals can be estimated by 
	$$
	2\| u \|_{L^\infty(\R_+)} \int_{|z|>\frac{a}{2}} \frac{dz}{|z|^{1+2s}}.
	$$
	To estimate the first integral, we recall our hypothesis 
	that $\|u\|_{C^{2s+\beta}[b,+\infty)}$ is finite for some $\beta\in (0,1)$ and 
	every $b>0$. Since $x>a$, it follows that
	$$
	\int_{\delta \le |z| \le \frac{a}{2}} \left| \frac{2u(x)-u(x+z)-u(x-z)}{|z|^{1+2s}} \right| dz 
	\le C \|u\|_{C^{2s+\beta}[\frac{a}{2},+\infty)} 	\int_{|z| \le \frac{a}{2}} |z|^{\beta-1} dz,
	$$
	for some (explicit) $C>0$. Thus \eqref{eq-energia-3} follows.
	
	To summarize, we may pass to the limit as $\delta\to 0^+$ in \eqref{eq-energia-2}, and 
	\eqref{ener-1} follows just multiplying by $c(1,s)$. 
	
	To conclude the proof of the lemma, let $u$ be a positive, bounded solution of \eqref{eq-unidim}. 
	By Lemma \ref{lema-monotonia}, $u'>0$ so that $u'\in L^1(1,+\infty)$, say. On the other hand, by 
	standard regularity we obtain that $u\in C^1(0,+\infty)$ and that the $C^{2s+\beta}$ norm of 
	$u$ in any interval of the form $[b,+\infty)$ is bounded for every $\beta \in (0,1)$. 	
	Thus the first part of the proof applies and we obtain \eqref{ener-2} by just noticing that 
	$$
	\int_a^{+\infty} u'(x) (-\Delta)^s u(x) dx =F (\rho)-F(u(a)),
	$$
	where $\rho=\lim_{x\to +\infty} u(x)$.
	\end{proof}
	
	\medskip
	
	Our next result is obtained by letting $a\to 0^+$ in \eqref{ener-2}. We use ideas in 
		Theorem 7.5 of \cite{MR2214908}.

	\begin{lemma}\label{lema-constante-1}
	Let $u$ be a bounded positive solution of \eqref{eq-unidim}. Then 
	\begin{equation}\label{eq-prel}
	F(\rho) = \mathcal{K}(s) \ell_0^2,
	\end{equation}
	where $\rho=\lim_{x\to +\infty} u(x)$, $\ell_0$ is given in \eqref{def-elecero} and 
	\begin{equation}\label{Csb}
	\begin{array}{rl}
	\mathcal{K}(s) \hspace{-2mm} & = \ds \frac{c(1,s)}{2} \left( -\frac{1}{2s} -\int_{-1}^1 \frac{((t+1)^s -1)^2}{|t|^{1+2s}} dt
	+ \int_1^{+\infty} \frac{t^{2s} - ((t+1)^s-1)^2}{t^{1+2s}} dt\right.\\[1pc]
	& \left. \ds \qquad \qquad +(1+2s)\int_1^{+\infty} \hspace{-2mm} \int_0^1 \frac{(t^s-\tau^s)^2}{(t-\tau)^{2+2s}} 
	d\tau dt \right).
	\end{array}
	\end{equation}
	\end{lemma}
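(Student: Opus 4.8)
The plan is to pass to the limit $a\to 0^+$ in the identity \eqref{ener-2} of Lemma \ref{lema-integr}. Write $\mathcal{E}(a)$ for the expression in the big parentheses there; since $u\in C(\R)$ with $u(0)=0$ and $F(t)=\int_0^t f(\tau)\,d\tau$, we have $F(u(a))\to 0$, so \eqref{ener-2} reads $\mathcal{E}(a)=\frac{2}{c(1,s)}\bigl(F(u(a))-F(\rho)\bigr)$ and therefore
$$
\lim_{a\to 0^+}\mathcal{E}(a)=-\frac{2}{c(1,s)}\,F(\rho).
$$
It thus suffices to show that this limit equals $-\frac{2}{c(1,s)}\,\mathcal{K}(s)\,\ell_0^2$. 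To evaluate $\lim\mathcal{E}(a)$ I would rescale at the origin: set $v_a(\xi):=a^{-s}u(a\xi)$. The changes of variables $y=a\eta$ in the first integral of $\mathcal{E}(a)$ and $x=a\xi$, $y=a\eta$ in the double integral make all powers of $a$ cancel (the scaling being critical), and yield
$$
\mathcal{E}(a)=\int_{\R}\frac{(v_a(1)-v_a(\eta))^2}{|1-\eta|^{1+2s}}\,d\eta-(1+2s)\int_1^{+\infty}\!\!\int_{-\infty}^1\frac{(v_a(\xi)-v_a(\eta))^2}{|\xi-\eta|^{2+2s}}\,d\eta\,d\xi.
$$

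By \eqref{def-elecero} and the continuity of $u$, $v_a(\xi)\to\ell_0\,\xi_+^s$ as $a\to 0^+$ (pointwise on $\R$ and uniformly on compact subsets of $(0,+\infty)$), where $\xi_+^s$ means $\xi^s$ for $\xi>0$ and $0$ otherwise. Two uniform bounds will be needed: first, $0\le v_a(\xi)\le C\,\xi_+^s$ on $\R$ for all small $a$, with $C=\sup_{0<y\le 1}u(y)/y^s+\rho$ (the second term taking care of the range $a\xi\ge 1$, where $u\le\rho$ and $\xi^s\ge a^{-s}$); second, since $v_a$ solves $(-\Delta)^s v_a=a^s f(a^s v_a)$ in $\R_+$ with right-hand side uniformly bounded, interior regularity for $(-\Delta)^s$ (cf. \cite{MR2494809,MR3168912}), together with the growth bound above, gives that the $v_a$ are bounded in $C^{\gamma}([1/2,2])$ for some $\gamma>s$, uniformly in $a$. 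With these in hand I fix $M>2$ and split $\mathcal{E}(a)=\mathcal{E}^M(a)+\mathcal{R}^M(a)$, where $\mathcal{E}^M(a)$ keeps $\eta\le M$ in the first integral and $\xi\le M$ in the second. For fixed $M$, dominated convergence gives $\mathcal{E}^M(a)\to\mathcal{E}^M_\infty$ as $a\to0^+$, where $\mathcal{E}^M_\infty$ is the same expression with $v_a$ replaced by $\ell_0\,\xi_+^s$: the uniform $C^\gamma$ bound near $\xi=1$ (with $\gamma>s$) dominates the integrands near the singularity $\eta=1$ and near the corner $(\xi,\eta)=(1,1)$, while the growth bound handles the remaining regions, in particular $\eta\to-\infty$.

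The main difficulty is to show $\sup_{0<a<a_0}|\mathcal{R}^M(a)|\le C_*M^{-s}$ for some $C_*$ independent of $M$. One cannot estimate the two tail integrals separately: each is finite for fixed $a>0$, but as $a\to 0^+$ each diverges like $\log(1/a)$, and only their difference stays bounded. The cancellation is made explicit by expanding the squares: using $v_a(\xi)\le C\xi^s$ (and $v_a\le C$ on $[0,1]$), one finds $\int_M^{+\infty}\frac{(v_a(1)-v_a(\eta))^2}{|\eta-1|^{1+2s}}\,d\eta=\int_M^{+\infty}\frac{v_a(\eta)^2}{|\eta-1|^{1+2s}}\,d\eta+\mathcal{O}(M^{-s})$, while carrying out the inner $\eta$-integral first gives $(1+2s)\int_M^{+\infty}\!\int_{-\infty}^1\frac{(v_a(\xi)-v_a(\eta))^2}{|\xi-\eta|^{2+2s}}\,d\eta\,d\xi=\int_M^{+\infty}\frac{v_a(\xi)^2}{|\xi-1|^{1+2s}}\,d\xi+\mathcal{O}(M^{-s})$, both $\mathcal{O}(M^{-s})$ being uniform in $a$. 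The two leading terms are identical, so $\mathcal{R}^M(a)=\mathcal{O}(M^{-s})$ uniformly in $a$. Since $\lim_a\mathcal{E}(a)$ and $\lim_a\mathcal{E}^M(a)=\mathcal{E}^M_\infty$ both exist, we get $|\lim_a\mathcal{E}(a)-\mathcal{E}^M_\infty|=|\lim_a\mathcal{R}^M(a)|\le C_*M^{-s}$, hence $\mathcal{E}^M_\infty\to\lim_a\mathcal{E}(a)$ as $M\to\infty$.

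It remains to compute $\lim_{M\to\infty}\mathcal{E}^M_\infty$. Since $\mathcal{E}^M_\infty=\ell_0^2\,Q_M$ with $Q_M=\int_{-\infty}^M\frac{(1-\eta_+^s)^2}{|1-\eta|^{1+2s}}\,d\eta-(1+2s)\int_1^M\!\int_{-\infty}^1\frac{(\xi^s-\eta_+^s)^2}{|\xi-\eta|^{2+2s}}\,d\eta\,d\xi$, I would split the first integral at $\eta=0$ (the part over $\eta<0$ equals $\frac{1}{2s}$, and the substitution $\eta=t+1$ turns the part over $\eta>0$ into the integrals of $\frac{((t+1)^s-1)^2}{|t|^{1+2s}}$ over $(-1,1)$ and over $(1,M)$) and split the inner $\eta$-integral of the double term at $\eta=0$ (the part over $\eta<0$ equals $\int_1^M\xi^{-1}\,d\xi$, the part over $0<\eta<1$ equals $(1+2s)\int_1^M\!\int_0^1\frac{(\xi^s-\eta^s)^2}{(\xi-\eta)^{2+2s}}\,d\eta\,d\xi$). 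The two logarithmically divergent contributions $\int_1^M\frac{((t+1)^s-1)^2}{t^{1+2s}}\,dt$ and $\int_1^M\xi^{-1}\,d\xi$ combine, as $M\to\infty$, into the convergent integral $\int_1^{+\infty}\frac{t^{2s}-((t+1)^s-1)^2}{t^{1+2s}}\,dt$, so that $\lim_M Q_M=-\frac{2}{c(1,s)}\mathcal{K}(s)$ with $\mathcal{K}(s)$ exactly as in \eqref{Csb}. Therefore $\lim_{a\to0^+}\mathcal{E}(a)=\lim_M\mathcal{E}^M_\infty=-\frac{2}{c(1,s)}\mathcal{K}(s)\ell_0^2$, which together with the value $-\frac{2}{c(1,s)}F(\rho)$ obtained above gives \eqref{eq-prel}. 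I expect the principal obstacles to be the uniform tail estimate of the third paragraph and securing the uniform-in-$a$ $C^\gamma$ regularity of $v_a$ near $\xi=1$.
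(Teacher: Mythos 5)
Your proposal is correct, and it starts from the same point as the paper's proof (pass to the limit $a\to 0^+$ in \eqref{ener-2}, using $F(u(a))\to 0$), but the way you evaluate the limit is genuinely different. The paper first splits off the region where $u$ vanishes to obtain \eqref{eq-pass}, and then passes to the limit term by term in the original variable, relying on the boundary regularity $u(x)/x^s\in C^1[0,+\infty)$ (hence $u'(x)\le s(\ell_0+\varepsilon)x^{s-1}$ and $u(x)=\ell_0x^s+O(x^{s+1})$); the crucial cancellation between the two individually log-divergent pieces is effected near the origin, by combining them into a single integrand of size $O(z^{-s})$. You instead blow up with the critical scaling $v_a(\xi)=a^{-s}u(a\xi)$, which needs only the existence of the limit $\ell_0$ together with uniform interior H\"older estimates for the family $v_a$, and you perform the cancellation in the far field $\{\eta>M\}$, uniformly in $a$, by expanding the squares so that the two tails share the identical leading term $\int_M^{+\infty}v_a(\eta)^2(\eta-1)^{-1-2s}d\eta$; your explicit computation of the limiting profile integral $Q_M$ does reproduce \eqref{Csb} with the correct signs, since $\lim_M Q_M=\tfrac1{2s}+F_1-F_2-(1+2s)F_3=-\tfrac{2}{c(1,s)}\mathcal{K}(s)$. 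What each route buys: yours avoids the finer $C^1$-up-to-the-boundary expansion of $u/x^s$ (you only use that the limit \eqref{def-elecero} exists and that $u\in C^s$ gives $v_a(\xi)\le C\xi_+^s$), and the $a$-uniform tail bound $|\mathcal{R}^M(a)|\le C_*M^{-s}$ is a clean way to make the $\infty-\infty$ cancellation rigorous; the paper's route avoids uniform estimates for a family of rescaled, globally unbounded functions. On that last point, do spell out how the interior estimate is applied: the $v_a$ are not uniformly bounded in $L^\infty(\R)$, so to get the $a$-independent $C^\gamma([1/2,2])$ bound with $\gamma>s$ you should truncate, e.g. estimate $v_a\chi_{B_4}$ and absorb the contribution of $v_a\chi_{B_4^c}$ into the right-hand side via $\int_{|\eta|\ge4}v_a(\eta)|\eta|^{-1-2s}d\eta\le C$, which your growth bound provides; with that detail included the argument is complete.
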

	
	\begin{proof}
	All the integrals in \eqref{Csb} can be seen to be convergent (but see the proof of 
	Lemma \ref{calculo-constante} below). 
	
	We first remark that, by boundary regularity, the function $\frac{u(x)}{x^s}$ is 
	in $C^1[0,+\infty)$ (cf. Theorem 7.4, part (iii) in \cite{MR3447732}). Thus in particular the 
	value $\ell_0$ given in \eqref{def-elecero} is well defined. 
	
	Let $a>0$. Since $u=0$ in $(-\infty,0)$, we can write
	\begin{align*}
	\int_{-\infty}^{+\infty} \frac{(u(a)-u(y))^2}{|a-y|^{1+2s}} dy & = 
	\int_{-\infty}^0 \frac{u(a)^2}{|a-y|^{1+2s}} dy + 
	\int_0^{+\infty} \frac{(u(a)-u(y))^2}{|a-y|^{1+2s}} dy\\
	& = \frac{1}{2s} \frac{u(a)^2}{a^{2s}} + 
	\int_{-a}^{+\infty} \frac{(u(a)-u(z+a))^2}{|z|^{1+2s}} dz.
	\end{align*}
	Similarly
	$$
	\int_a^{+\infty} \hspace{-2mm} \int_{-\infty}^a \hspace{-1mm} \frac{(u(x)-u(y))^2}{|x-y|^{2+2s}} dy dx =
	\frac{1}{1+2s} \int_a^{+\infty} \hspace{-1mm} \frac{u(x)^2}{x^{1+2s}} dx + 
	\int_a^{+\infty} \hspace{-2mm} \int_0^a \hspace{-1mm} \frac{(u(x)-u(y))^2}{|x-y|^{2+2s}} dy dx.
	$$
	Thus by \eqref{ener-2} we see that 
	\begin{equation}\label{eq-pass}
	\begin{array}{rl}
	F(\rho) \hspace{-2mm} & = \ds  F(u(a)) -\frac{c(1,s)}{4s} \frac{u(a)^2}{a^{2s}}  -\frac{c(1,s)}{2}\int_{-a}^{+\infty}
	\frac{(u(a)-u(z+a))^2}{|z|^{1+2s}} dz\\
	& + \ds \frac{c(1,s)}{2} \int_a^{+\infty} \frac{u(x)^2}{x^{1+2s}} dx + \frac{c(1,s)(1+2s)}{2} 
	\int_a^{+\infty} \hspace{-2mm} \int_0^a \frac{(u(x)-u(y))^2}{|x-y|^{2+2s}} dy dx.
	\end{array}
	\end{equation}
	Our intention is to pass to the limit in this equality as $a\to 0^+$. For this sake, it is clear that 
	only the integrals need to be taken into account. 
	
	We first claim that 
	\begin{equation}\label{claim-1-ks}
	\lim_{a\to 0} \int_a^{+\infty} \hspace{-2mm} \int_0^a \frac{(u(x)-u(y))^2}{|x-y|^{2+2s}} dy dx 
	= \ell_0^2 \int_1^{+\infty} \hspace{-2mm} \int_0^1 \frac{(t^s-\tau^s)^2}{(t-\tau)^{2+2s}} d\tau dt.
	\end{equation}
	To prove \eqref{claim-1-ks}, fix $\eta>0$ and take $a<\frac{\eta}{2}$. Then for $x>\eta$ and 
	$0<y<a$ we have $x-y\ge \frac{x}{2}$. Therefore
	\begin{align}\label{claim-1-eq-1}
	\int_\eta^{+\infty} \hspace{-2mm} \int_0^a \frac{(u(x)-u(y))^2}{|x-y|^{2+2s}} dy dx  & 
	\le 4 \| u\|_{L^\infty(\R)}^2 \int_\eta^{+\infty} \hspace{-2mm} \int_0^a \frac{dy}{|x-y|^{2+2s}} dx \nonumber \\
	& \le 2^{4+2s}  \| u\|_{L^\infty(\R)}^2 a  \int_\eta^{+\infty} x^{-2-2s} dx\\
	& = \frac{2^{4+2s}  \| u\|_{L^\infty(\R)}^2}{1+2s} \eta^{-1-2s} a. \nonumber 
	\end{align}
	To analyze the same integral when $x$ varies in the interval $[a,\eta]$, observe that 
	the regularity of $u(x)/x^s$ implies
	$$
	\lim_{x\to 0} \frac{u'(x)}{x^{s-1}}= s\ell_0.
	$$
	Therefore, if we fix $\varepsilon>0$, for small enough $\eta$ we can guarantee that 
	$u'(x)\le s(\ell_0+\varepsilon)x^{s-1}$ if $x<\eta$. Hence for $y<a<x<\eta$ we have
	$$
	0< u(x) -u(y) = \int_y^x u'(\xi) d\xi \le (\ell_0+\varepsilon) (x^s-y^s),
	$$
	so that 
	$$
	\int_a^\eta \hspace{-2mm} \int_0^a \frac{(u(x)-u(y))^2}{|x-y|^{2+2s}} dy dx 
	\le (\ell_0+\varepsilon)^2 
	\int_a^\eta \hspace{-2mm} \int_0^a \frac{(x^s-y^s)^2}{|x-y|^{2+2s}} dy dx.
	$$
	In the last integral, we change variables by $x=at$, $y=a \tau$ and recall 
	\eqref{claim-1-eq-1} to obtain, for some $C>0$,
	$$
	\int_a^{+\infty} \hspace{-2mm} \int_0^a \frac{(u(x)-u(y))^2}{|x-y|^{2+2s}} dy dx \le C \eta^{-1-2s} a + 
	 (\ell_0+\varepsilon)^2 \int_1^{\frac{\eta}{a}} \hspace{-2mm} \int_0^1 \frac{(t^s-\tau^s)^2}{(t-\tau)^{2+2s}} 
	d\tau dt.
	$$
	Letting $a\to 0^+$ and then $\varepsilon \to 0^+$ we have 
	$$
	\limsup_{a\to 0} \int_a^{+\infty} \hspace{-2mm} \int_0^a \frac{(u(x)-u(y))^2}{|x-y|^{2+2s}} dy dx 
	\le \ell_0^2 \int_1^{+\infty} \hspace{-2mm} \int_0^1 \frac{(t^s-\tau^s)^2}{(t-\tau)^{2+2s}} d\tau dt.
	$$
	The opposite inequality for the inferior limit is shown similarly, and this establishes 
	\eqref{claim-1-ks}. 
	
	We finally deal with the remaining two integrals in \eqref{eq-pass}. We write
	\begin{align*}
	-\int_{-a}^{+\infty} \frac{(u(z+a)-u(a))^2}{|z|^{1+2s}} dz
	+  \int_a^{+\infty} \frac{u(z)^2}{z^{1+2s}} dz & = 
	-\int_{-a}^a \frac{(u(z+a)-u(a))^2}{|z|^{1+2s}} dz \\
	& +	\int_a ^{+\infty} \frac{u(z)^2-(u(z+a)-u(a))^2}{|z|^{1+2s}} dz.
	\end{align*}
	Reasoning exactly as with \eqref{claim-1-ks} it can be shown that 
	$$
	\lim_{a\to 0} \int_{-a}^a \frac{(u(z+a)-u(a))^2}{|z|^{1+2s}} dz =
	\ell_0^2 \int_{-1}^1 \frac{((t+1)^s -1)^2}{|t|^{1+2s}} dt.
	$$
	On the other hand, using the $C^1$ regularity of $u(x)/x^s$ up to $x=0$ we can ensure 
	that 
	\begin{equation}\label{expansion-boundary}
	u(x)= \ell_0 x^s +O(x^{s+1}), \quad \hbox{as }x\to 0,
	\end{equation}
	where $O(x)$ is as usual a function which verifies $|O(x)| \le C x$ for small $x$ 
	and some $C>0$. It follows from \eqref{expansion-boundary} that for small $\eta>0$, if $a<z<\eta$,
	$$
	(u(z+a)-u(a))^2 = \ell_0^2 ((z+a)^s-a^s)^2 + O(z^{s+1}).
	$$
	Thus if $\eta>0$ is small enough and $a<\eta$:
	\begin{align*}
	\int_a ^\eta \hspace{-1mm} \frac{u(z)^2-(u(z+a)-u(a))^2}{z^{1+2s}} dz & = 
	\ell_0^2 \int_a^\eta \frac{z^{2s} - ((z+a)^s-a^s)^2 + O(z^{s+1})}{z^{1+2s}} dz\\
	& = \ell_0^2 \int_a^\eta\frac{z^{2s} - ((z+a)^s-a^s)^2}{z^{1+2s}} dz + 
	\int_a^\eta O(z^{-s})dz \\
	& = \ell_0^2 \int_1^{\frac{\eta}{a}} \frac{t^{2s} - ((t+1)^s-1)^2}{t^{1+2s}} dt + 
	O(\eta^{1-s}).
	\end{align*}
	Moreover, by dominated convergence:
	$$
	\lim_{a\to 0^+} \int_\eta^{+\infty} \frac{u(z)^2-(u(z+a)-u(a))^2}{z^{1+2s}} dz =0.
	$$
	Hence we deduce
	$$
	\lim_{a\to 0^+} \int_a^{+\infty} \frac{u(z)^2-(u(z+a)-u(a))^2}{z^{1+2s}} dz = 
	\ell_0^2 \int_1^{+\infty} \frac{t^{2s} - ((t+1)^s-1)^2}{t^{1+2s}} dt.
	$$
	Finally, we can pass to the limit in \eqref{eq-pass} to conclude the proof of 
	the lemma.	
	\end{proof}
	
	\bigskip
	
	Our last step is to obtain an alternative expression for the constant in \eqref{eq-prel}. To do it, 
	we take advantage of some of the results in \cite{MR3211861}, complemented 
	with an additional analysis of the properties of $\mathcal{K}(s)$.

\begin{lemma}\label{calculo-constante}
For $s\in (0,1)$ we have 
	\begin{equation}\label{igualdad-ctes-1}
		\mathcal{K}(s) = \frac{\Gamma(1+s)^2}{2},
	\end{equation}
where $\mathcal{K}(s)$ is given in \eqref{Csb}.
\end{lemma}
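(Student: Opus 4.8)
The plan rests on noticing that $\mathcal{K}(s)$ in \eqref{Csb} is a \emph{universal} constant: it depends only on $s\in(0,1)$, not on $f$ nor on any particular solution. Combined with Lemma \ref{lema-constante-1}, which gives $F(\rho)=\mathcal{K}(s)\ell_0^2$ for every bounded positive solution of \eqref{eq-unidim}, it is therefore enough to identify $\mathcal{K}(s)$ in one convenient instance, and the natural tool is the Pohozaev-type identity for the fractional Laplacian of \cite{MR3211861}. Indeed, $\int_0^{+\infty}u'(x)(-\Delta)^s u(x)\,dx$ --- which for a solution equals $F(\rho)$, and which is exactly the object whose $a\to0^+$ limit produced \eqref{eq-prel} through Lemma \ref{lema-integr} --- equals, by the half-space version of that identity, $\tfrac{\Gamma(1+s)^2}{2}\,\ell_0^2$, the number $\tfrac{\Gamma(1+s)^2}{2}$ being precisely the universal boundary constant computed in \cite{MR3211861}. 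Comparing the two expressions and cancelling $\ell_0^2$ --- which is legitimate since $F(\rho)>F(0)=0$ by \eqref{HF1}, hence $\ell_0\ne0$ --- yields \eqref{igualdad-ctes-1}.

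The delicate point in this route is that the identity of \cite{MR3211861} on the unbounded domain $\R_+$ carries the extra requirement $f(u)\in L^1(\R_+)$. To produce one admissible instance I would test against a nonlinearity with a non-degenerate zero, say $f(t)=\rho-t$ (locally Lipschitz, with $f(\rho)=0$, $f'(\rho)<0$ and \eqref{HF1}), and take $u=u_\rho$ from Theorem \ref{theorem:MaximalSolR}. Writing $w=\rho-u\ge0$, the equation becomes $(-\Delta)^s w=-w$ in $\R_+$ with $w\equiv\rho$ in $\R\setminus\R_+$; since the exterior datum forces $(-\Delta)^s w(x)\sim-\tfrac{c(1,s)\rho}{2s}\,x^{-2s}$ as $x\to+\infty$, a standard barrier argument for $(-\Delta)^s+1$ gives $0\le\rho-u(x)\le C(1+x)^{-2s}$, so that $f(u)\in L^1(\R_+)$ as soon as $s>\tfrac12$. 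This establishes \eqref{igualdad-ctes-1} for $s\in(\tfrac12,1)$, and the equality then extends to all of $(0,1)$ because both members are real-analytic in $s$: for $\tfrac{\Gamma(1+s)^2}{2}$ this is immediate, while for $\mathcal{K}(s)$ it requires first checking that each integral in \eqref{Csb} converges locally uniformly in $s$. This last verification --- based on the expansions $((1+t)^s-1)^2=s^2t^2+O(t^3)$ near $t=0$, the matching subcritical behavior at infinity, and $(t^s-\tau^s)^2\le C|t-\tau|^2$ on the diagonal --- is the ``analysis of the properties of $\mathcal{K}(s)$'' that is in any case needed to make sense of the formula \eqref{Csb}.

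An equivalent, fully explicit route --- the one that leans most directly on the computations of \cite{MR3211861} --- is to evaluate \eqref{Csb} by hand. The substitution $\tau=t\sigma$ turns the double integral into $\tfrac{1}{2s}\int_0^1\frac{(1-\sigma^s)^3(1+\sigma^s)}{(1-\sigma)^{2+2s}}\,d\sigma$, while the inversions $t\mapsto1/t$ recast the remaining two as $\int_0^1\frac{1-((1+v)^s-v^s)^2}{v}\,dv$ and $\int_0^1\frac{((1+t)^s-1)^2+((1-t)^s-1)^2}{t^{1+2s}}\,dt$; each is a Beta integral of exactly the type arising in the constant computation of \cite{MR3211861}, and assembling them with the term $-\tfrac{1}{2s}$ and the prefactor $\tfrac{c(1,s)}{2}$ (with $c(1,s)$ as in \eqref{eq-const-norm}) collapses to $\tfrac{\Gamma(1+s)^2}{2}$; as a consistency check the limit $s\to1^-$ must be $\tfrac12=\tfrac{\Gamma(2)^2}{2}$, in agreement with the remark following Theorem \ref{thm-pral-energia}. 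In either approach I expect the main obstacle to be not the closing algebra but the two preliminary steps: the convergence analysis of the singular integrals in \eqref{Csb}, and --- in the first route --- the decay estimate $\rho-u=O(x^{-2s})$ together with the legitimization of the Pohozaev identity on the unbounded domain $\R_+$.
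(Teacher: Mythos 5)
Your main route is essentially the paper's own proof: specialize to the affine nonlinearity (the paper takes $f(u)=\lambda(1-u)$ with $\lambda$ large, so that a smoothed profile $1-x^{-2s}$ is an explicit subsolution yielding the decay $1-u\le x^{-2s}$ that you delegate to a barrier argument), combine Lemma \ref{lema-constante-1} with the Pohozaev identity of \cite{MR3211861} under the hypothesis $f(u)\in L^1(\R_+)$ to obtain \eqref{igualdad-ctes-1} for $s>\frac12$, and then extend to all of $(0,1)$ by analyticity of $\mathcal{K}(s)$. The only caveat concerns the last step: the paper continues $F_1,F_2,F_3$ holomorphically to the strip $0<\mathrm{Re}(s)<1$ and invokes the identity principle, and your ``real-analyticity'' verification should likewise be done for complex $s$ (locally uniform convergence of the integrals for real $s$ alone does not give real-analyticity), which is exactly the estimate the paper carries out on rectangles of that strip.
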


\begin{proof}	
Let us begin by proving \eqref{igualdad-ctes-1} for $s>\frac{1}{2}$. This will follow by 
establishing \eqref{eq-prel} for a particular problem in two different ways. 
For $\lambda>0$ to be chosen later, consider the problem 
\begin{equation}\label{eq-max-calculo}
	\begin{cases}
		(-\Delta)^s u=\lambda( 1-u) &\text{ in } \R_+,\\
		\ \ u=0 &\text{ in } \R\setminus \R_+.\\
	\end{cases}
\end{equation}
	By Theorem \ref{theorem:MaximalSolR}, problem \eqref{eq-max-calculo} admits a maximal solution relative 
	to $\overline{u}=1$, which will be denoted by $u$. The function $u$ is strictly increasing and verifies 
	$\lim_{x\to +\infty} u(x)=1$. 
	
	We claim that $f(u):=\lambda(1-u)\in L^1(0,+\infty)$. To prove this we will construct a suitable subsolution of 
	\eqref{eq-max-calculo}. Choose a nondecreasing function $v\in C^\infty(\R)$ verifying
	
$$
	v(x)=\begin{cases}
		0 &\text{ in } (-\infty,0],\\
		1-x^{-2s} &\text{ if } x\ge 2.\\
	\end{cases}
$$
Then, for $x\ge 4$:
		\begin{align}
		(-\Delta)^s v(x)&=c(1,s) \left(
		\int_{-\infty}^{0}\frac{1- x^{-2s}}{|x-y|^{1+2s}}dy
		+\int_{0}^{2}\frac{(1- x^{-2s})-v(y)}{|x-y|^{1+2s}}\right.
		\, dy\nonumber\\
		&\qquad- \left. \int_{2}^{+\infty}
		\frac{x^{-2s}-y^{-2s}}{|x-y|^{1+2s}}\, dy\right) \nonumber\\
	&= c(1,s) x^{-2s} \hspace{-1mm}\left(\hspace{-1mm}(1- x^{-2s}) \hspace{-1mm}\int_{-\infty}^{0}\frac{d\tau}{|1-\tau|^{1+2s}}
	+ \hspace{-1mm} \int_{0}^{2/x} \hspace{-1mm} \frac{(1- x^{-2s})-v(\tau\, x)}{|1-\tau|^{1+2s}}\, d\tau\right.\nonumber\\
	&\qquad- \, \,  \left. 	x^{-2s} \int_{2/x}^{+\infty}
\frac{\tau^{-2s}-1}{|1-\tau|^{1+2s}}\, d\tau\right)\nonumber\\
	& \le c(1,s) x^{-2s}\left(\int_{-\infty}^{1/2}\frac{d\tau}{|1-\tau|^{1+2s}} - 
	x^{-2s} \int_{1/2}^{+\infty} \frac{\tau^{-2s}-1}{|1-\tau|^{1+2s}}\, d\tau\right)
		,	\label{cte_explicita1}
	\end{align}
	where we have made the change of variables $\tau =y/x$ in the first three integrals above. Observe 
	that the last integral converges, since it is to be understood in the principal value sense, 
	as always. It follows from \eqref{cte_explicita1} that for some $C>0$
	$$
	(-\Delta)^s v(x)\leq C x^{-2s}, \qquad \hbox{for } x\ge 4.
	$$
	Since $v$ is a smooth function, the same inequality holds for $x\ge 2$, by enlarging the 
	constant if necessary. Therefore, if $\lambda$ is large enough we see that 
	$$
	(-\Delta)^s v (x) \le \lambda (1-v(x)), \qquad \hbox{for } x\ge 2.
	$$
	On the other hand, the monotonicity of $v$ implies that $v$ is bounded away from 1 in 
	the interval $[0,2]$, hence the same inequality can be achieved there by taking a larger 
	value of $\lambda$. 
		
	Thus we have shown that $v$ is a subsolution of \eqref{eq-max-calculo} if $\lambda$ is large 
	enough. It follows by the maximality of $u$ that $v\le u$ in $\R$, therefore, if $x\ge 2$:
	$$
	1-u(x) \le 1-v(x) = x^{-2s} \in L^1 (2,+\infty),
	$$
	since $s>\frac{1}{2}$, which completes the proof of the claim.
	
	We now apply Lemma \ref{lema-constante-1} to problem \eqref{eq-max-calculo} to 
	obtain
	\begin{equation}\label{cte-1}
	F(1)=\frac{\lambda}{2}= \mathcal{K}(s) \ell_0^2,
	\end{equation}
	where $\ell_0=\lim_{x\to 0} u(x)/x^{s}$. 
	
	On the other hand, we now make the crucial observation that some of the results in \cite{MR3211861} 
	can be applied to solutions $u$ of problems posed in unbounded domains $\Omega$ 
	as long as $f(u)\in L^1(\Omega)$, which is precisely the situation in \eqref{eq-max-calculo}. More 
	precisely, see the proof of Proposition 1.6 and (2.7) there. 
	In particular by Theorem 1.9 in \cite{MR3211861} we see that 
	\begin{equation}\label{cte-2}
	\frac{\lambda}{2}= \frac{\Gamma(1+s)^2}{2} \ell_0^2.
	\end{equation}
	Combining \eqref{cte-1} and \eqref{cte-2} we see that \eqref{igualdad-ctes-1} holds for $s>\frac{1}{2}$.
	
	Unfortunately, this procedure does not seem to be generalized to cover the whole range 
	$s\in (0,1)$. Indeed, we expect the maximal solution $u$ of \eqref{eq-max-calculo} to 
	behave exactly like $1-x^{-2s}$ as $x\to +\infty$, so that $f(u)\not\in L^1(0,+\infty)$ if 
	$s\le \frac{1}{2}$. 
	
	\medskip
	
	Therefore we will prove \eqref{igualdad-ctes-1} by showing that $\mathcal{K}(s)$ 
	can be seen as an analytic function of the complex variable $s$ in the strip $0<\text{Re}(s)<1$. 
	Since it coincides with $\Gamma(s+1)^2/2$ in the real segment $(\frac{1}{2},1)$, the well-known identity principle 
	will imply that both functions coincide throughout the strip, therefore in the segment $(0,1)$.
	
	First of all, we write the function $\mathcal{K}(s)$ 
	as follows
	\begin{equation}\label{eq:ks}
		\mathcal{K}(s) = \frac{c(1,s)}{2} \left( -\frac{1}{2s} -F_1(s) 
	+ F_2(s)  +(1+2s)F_3(s)\right), 
	\end{equation}
	where
	\begin{equation}\label{def-efes}
	\begin{array}{l}
	\ds F_1(s):= \int_{-1}^1 \frac{((t+1)^s -1)^2}{|t|^{1+2s}} dt\\[.75pc]
	\ds F_2(s):= \int_1^{+\infty} \frac{t^{2s} - ((t+1)^s-1)^2}{t^{1+2s}} dt\\[0.75pc]
	\ds F_3(s):= \int_1^{+\infty} \hspace{-2mm} \int_0^1 \frac{(t^s-\tau^s)^2}{(t-\tau)^{2+2s}} d\tau dt.
	\end{array}
	\end{equation}
	Therefore, it suffices to verify that $F_1$, $F_2$ and $F_3$ are analytic in the strip 
	$0<\text{Re}(s)<1$. We will achieve this by showing that each of the integrals in  
  \eqref{def-efes} converges absolutely and uniformly in rectangles of the form $U_{\sigma_1,\sigma_2,K}=\{s\in \mathbb{C}: \ 
	\sigma_1 \le \text{Re}(s)\le \sigma_2,\ -K \le \text{Im}(s) \le K\}$, where $0<\sigma_1<\sigma_2<1$ and $K>0$.
	
	We use the notation $s=\sigma + i\omega$, where $\sigma_1\le \sigma\le \sigma_2$ and $|\omega|\le K$. 
	It is important to stress that the complex power functions appearing in \eqref{def-efes} have 
	to be understood in the sense
	$$
	x^s= x^\sigma e^{i \omega\log x}, \quad x\in \R_+.
	$$
	Thus in particular $|x^s|=x^\sigma$ for every $x>0$.
	
	We begin with the integral defining $F_1$. It is enough to prove the uniform convergence of 
	the integral in $[-\frac{1}{2},1]$. Observe that, for $s\in U_{\sigma_1,\sigma_2,K}$, $t\in [-\frac{1}{2},1]$:
	\begin{equation}\label{des-anal}
	\begin{array}{rl}
	|(t+1)^s-1|^2 \hspace{-2mm} &  =((t+1)^\sigma - \cos (\omega \log(t+1)))^2 + \sin^2 (\omega\log (t+1))\\[.5pc]
	& \le ((2^{1-\sigma_2}\sigma_2+2K)^2 + K^2) t^2=Ct^2.
	\end{array}
	\end{equation}
	Therefore
	$$
	\int_{-\frac{1}{2}}^1 \left|\frac{((t+1)^s -1)^2}{|t|^{1+2s}}\right| dt 
	= \int_{-\frac{1}{2}}^1 \frac{|(t+1)^s-1|^2}{|t|^{1+2\sigma}} dt \le 
	C \int_{-\frac{1}{2}}^1 |t|^{1-2\sigma_2} dt,
	$$
	which shows the absolute and uniform convergence of the integral, therefore the analyticity of $F_1$. 
	As for $F_2$, we have 
	$$
	|t^{2s}-((t+1)^s-1)^2| \le |t^{s}-(t+1)^s+1||t^s+(t+1)^s-1|\le |s t^{s-1}-1| t^{\sigma} \le C t^{\sigma},
	$$
	thus
	$$
	\int_1^{+\infty} \left| \frac{t^{2s} - ((t+1)^s-1)^2}{t^{1+2s}}\right|  dt
	\le C \int_1^{+\infty} \frac{ dt}{t^{1+\sigma_1}},
	$$
	which shows that $F_2$ is analytic as well.
	
	Finally, we consider the integral defining $F_3$. We split it as follows:
	\begin{align*}
	\int_1^{+\infty} \hspace{-2mm} \int_0^1 \frac{(t^s-\tau^s)^2}{(t-\tau)^{2+2s}} d\tau dt &
	=\int_1^2 \hspace{-2mm} \int_0^\frac{1}{2} \frac{(t^s-\tau^s)^2}{(t-\tau)^{2+2s}} d\tau dt
	+\int_1^2 \hspace{-2mm} \int_\frac{1}{2}^1 \frac{(t^s-\tau^s)^2}{(t-\tau)^{2+2s}} d\tau dt\\
	& +\int_2^{+\infty} \hspace{-2mm} \int_0^1 \frac{(t^s-\tau^s)^2}{(t-\tau)^{2+2s}} d\tau dt
	=:I_1(s)+I_2(s)+I_3(s).
	\end{align*}
	Notice that $I_1$ defines an analytic function since it is a proper 
	integral. Thus we only have to show the uniform convergence of $I_2$ and $I_3$.
	Regarding $I_2$, observe that for $s\in U_{\sigma_1,\sigma_2,K}$, 
	$t\in [1,2]$ and $\tau\in [\frac{1}{2},1]$, we have, reasoning as in \eqref{des-anal}:
	\begin{align*}
	|t^s-\tau^s|^2 & =(t^\sigma-\tau^\sigma \cos(\omega(\log t-\log \tau)))^2 + 
	\sin^2 (\omega(\log t-\log \tau)) \\
	& \le C(t-\tau)^2 \tau^{\sigma-2}\le C(t-\tau)^2,
	\end{align*}
	for some $C>0$. Therefore:
	$$
	\int_1^2 \hspace{-2mm} \int_\frac{1}{2}^1 \left| \frac{(t^s-\tau^s)^2}{(t-\tau)^{2+2s}} \right| d\tau dt
	\le C \int_1^2 \hspace{-2mm} \int_\frac{1}{2}^1 \frac{d\tau}{(t-\tau)^{2\sigma}} dt
	\le C \int_1^2 \hspace{-2mm} \int_\frac{1}{2}^1 \frac{d\tau}{(t-\tau)^{2\sigma_1}} dt.
	$$
	Finally, for the remaining integral $I_3$ we have
	\begin{align*}
	\int_2^{+\infty} \hspace{-2mm} \int_0^1 \left| \frac{(t^s-\tau^s)^2}{(t-\tau)^{2+2s}}\right| d\tau dt & 
	\le \int_2^{+\infty} \hspace{-2mm} \int_0^1 \frac{(t^\sigma+\tau^\sigma)^2}{(t-\tau)^{2+2\sigma}} d\tau dt\\ 
	& \le \int_2^{+\infty} \frac{(t^\sigma+1)^2}{(t-1)^{2+2\sigma}} dt \le 
	36 \int_2^{+\infty} \frac{dt}{t^2},
	\end{align*}
	thereby showing the analyticity of $F_3$. 
	To summarize, we have shown that $F_1$, $F_2$ and $F_3$ define analytic functions in the strip 
	$0<\text{Re}(s)<1$. As we have already remarked, this concludes the proof of \eqref{igualdad-ctes-1}.
\end{proof}	

\bigskip

\begin{proof}[Proof of Theorem \ref{thm-pral-energia}] It is immediate taking into account 
Lemmas \ref{lema-monotonia}, \ref{lema-integr}, \ref{lema-constante-1} and 
	\ref{calculo-constante}.
\end{proof}

\bigskip
	
\subsection{Uniqueness of one-dimensional solutions}	
	We finally come to the principal result of this section which is the uniqueness 
	of positive solutions of \eqref{eq:Problema-unidim}.
		
	\begin{theorem}\label{unicidad_unidim}
	Assume $f$ is locally Lipschitz and $\rho>0$ is such that $f(\rho)=0$ and \eqref{HF1} 
	holds. Then the problem
	\begin{equation}\label{P_unicidad}\tag{$P_1$}
		\begin{cases}
			(-\Delta)^s u = f(u) &\text{ in } \mathbb{R}_+,\\
			\ \ u=0 &\text{ in } \mathbb{R}\setminus \mathbb{R}_+,
			\end{cases}
	\end{equation}
	admits at most a positive solution $u$ verifying
	\begin{equation}\label{norm-inf}
	\|u\|_{L^\infty(\R)}=\rho,
	\end{equation}
	that we will denote by $u_\rho$.
	\end{theorem}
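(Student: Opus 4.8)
The plan is to exploit the explicit value of the fractional boundary derivative $\ell_0$ furnished by Theorem \ref{thm-pral-energia}, together with Hopf's lemma for the fractional Laplacian; sliding is not needed here. First I would invoke Theorem \ref{theorem:MaximalSolR} to produce the maximal viscosity solution $u_\rho$ of \eqref{P_unicidad} relative to $\bar u=\rho$: it is positive, strictly increasing, satisfies $\|u_\rho\|_{L^\infty(\mathbb{R})}=\rho$ and $\lim_{x\to+\infty}u_\rho(x)=\rho$. Now let $u$ be an arbitrary positive solution of \eqref{P_unicidad} with $\|u\|_{L^\infty(\mathbb{R})}=\rho$. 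Since $u\ge 0$ this forces $0\le u\le\rho$ in $\mathbb{R}$, and because $f(\rho)=0$ the constant $\rho$ is a supersolution; hence the maximality of $u_\rho$ gives $u\le u_\rho$ in $\mathbb{R}$. Moreover, by Lemma \ref{lema-monotonia}, $u$ is strictly increasing, so $\lim_{x\to+\infty}u(x)=\sup_{\mathbb{R}}u=\rho$.

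Next I would apply the energy identity of Theorem \ref{thm-pral-energia} to both $u$ and $u_\rho$. Since both have limit $\rho$ at $+\infty$, both have the same fractional derivative at the origin, namely $\ell_0=(2F(\rho))^{1/2}/\Gamma(1+s)$. Recalling the boundary regularity quoted before Lemma \ref{lema-constante-1} (the functions $u(x)/x^s$ and $u_\rho(x)/x^s$ extend to $C^1$ functions up to $x=0$), it follows that $w:=u_\rho-u\ge 0$ satisfies
\[
\lim_{x\to 0^+}\frac{w(x)}{x^s}=0 .
\]
On the other hand, since $u$ and $u_\rho$ are classical solutions, $w$ solves the linear nonlocal equation $(-\Delta)^s w=c(x)\,w$ in $\mathbb{R}_+$ with $w=0$ in $\mathbb{R}\setminus\mathbb{R}_+$, where $c(x)=(f(u_\rho(x))-f(u(x)))/(u_\rho(x)-u(x))$ (and $c(x):=0$ where $w(x)=0$) is bounded by the Lipschitz constant of $f$ on $[0,\rho]$.

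Then I would argue by contradiction: suppose $w\not\equiv 0$. As $w\ge 0$ in $\mathbb{R}$ and $w$ satisfies the above linear equation with bounded zeroth-order coefficient, the strong maximum principle forces $w>0$ in $\mathbb{R}_+$ — at an interior zero $x_0$ of $w$ one would have $(-\Delta)^s w(x_0)<0$ while the right-hand side vanishes there. Localizing near the boundary point $x=0$, say on $(0,1)$, where $w$ is positive while $w\ge 0$ holds globally (so the nonlocal contributions carry the correct sign), Hopf's lemma for the fractional Laplacian yields $\liminf_{x\to 0^+}w(x)/x^s>0$, contradicting the displayed limit. Hence $w\equiv 0$, that is $u=u_\rho$, and uniqueness follows; we denote this unique solution $u_\rho$, consistently with Theorem \ref{theorem:MaximalSolR}.

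The conceptual core — that two positive solutions with the same $L^\infty$ norm are automatically ordered (via maximality) and share the same $\ell_0$ (via the energy) — is already in hand from Theorems \ref{theorem:MaximalSolR} and \ref{thm-pral-energia}. The main obstacle I anticipate is purely technical: making the fractional Hopf lemma fully rigorous in the present setting, namely on the half-line rather than a bounded domain and for a function that is only $C^s$ up to the boundary, and confirming that $\ell_0$ is well defined and finite for every bounded positive solution; both points rest on the boundary regularity theory cited earlier in the paper, and it is mainly a matter of isolating the correct statements and barriers.
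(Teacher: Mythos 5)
Your argument is correct and is essentially the paper's own proof: maximality gives $u\le u_\rho$, the strong maximum principle yields the dichotomy $u\equiv u_\rho$ or $u<u_\rho$ in $(0,+\infty)$, and the coincidence of the fractional boundary derivatives $\ell_0$ from Theorem \ref{thm-pral-energia} contradicts Hopf's lemma in the second case. The technical point you flag (a fractional Hopf lemma valid here) is handled in the paper by citing Lemma 1.2 of \cite{MR3533199}.
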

	
\begin{proof}
Let $u$ be a positive solution of \eqref{P_unicidad} verifying \eqref{norm-inf} and denote 
by $\widetilde{u}$ the maximal solution relative to $\rho$ given by Theorem \ref{theorem:MaximalSolR}. 
Then $u\le \widetilde{u}$ in $\R$. Since $(-\Delta)^{s}(\widetilde{u}-u)\geq 
	-L (\widetilde{u}-u)$ in $(0,+\infty)$, where $L$ is the Lipschitz constant of $f$, we deduce 
by the strong maximum principle that either $u\equiv \widetilde{u}$ in $\R$ or $u<\widetilde{u}$ in $(0,+\infty)$.
Let us rule out the second possibility.

Indeed, assume $u<\widetilde{u}$ in $(0,+\infty)$. By Hopf's lemma (see Lemma 1.2 in \cite{MR3533199}) we 
have
	\begin{equation}\label{contradic1}
		\lim_{x\to 0^{+}}\frac{\widetilde{u}(x)-u(x)}{x^s}>0.
	\end{equation}
On the other hand, we may apply Theorem \ref{thm-pral-energia} to have 
		\begin{equation}\label{Fundamental}
		\lim_{x\to 0^{+}} \frac{u(x)}{x^s} = \frac{(2F(\rho))^\frac{1}{2}}{\Gamma(1+s)},
	\end{equation}
	and the same equality holds for $\widetilde{u}$. Hence we deduce  
	$$
		\lim_{x\to 0^{+}}\frac{\widetilde{u}(x)-u(x)}{x^s}=0,
	$$
	which is a contradiction with \eqref{contradic1}. 
	
	Thus we necessarily have $u\equiv \widetilde{u}$ in $\R$, thereby showing that the maximal 
	solution is the only one verifying \eqref{norm-inf}. The proof is concluded.
\end{proof}
	
	\bigskip
	
	%%%%%%%%%%%%%%%%%%%%%%%%%%%%%%%%%%%%
	%
	% Main results
	%%%%%%%%%%%%%%%%%%%%%%%%%%%%%%%%%%%%
	%
	\section{Proof of the main results}\label{sec-main}
	\setcounter{equation}{0}
	
	This section is dedicated to prove the main results in the paper. 
	We begin with the proof of the features of problem \eqref{eq:Problema-unidim}.
	
	\bigskip

\begin{proof}[Proof of Theorem \ref{theorem:unidim}]
Let $\rho>0$ such that $f(\rho)=0$ and \eqref{HF1} is satisfied. By Theorem \ref{theorem:MaximalSolR}, 
there exists a positive solution $u_{\rho}$ of \eqref{eq:Problema-unidim} verifying 
$\|u_{\rho}\|_{L^\infty(\R)}=\rho$. Moreover, by 
Theorem \ref{unicidad_unidim}, this is the only solution with this property, and $u_\rho$ is 
strictly increasing and verifies \eqref{eq:elecero}.

Thus to conclude the proof, we need to show that, given any bounded, positive solution $u$ of \eqref{eq:Problema-unidim} 
and setting $\rho=\|u \|_{L^\infty(\R)}$ we necessarily have $f(\rho)=0$ and $f$ verifies \eqref{HF1}.

To show the first assertion, consider the functions
$$
u_n(x) = u(x+n), \quad x\in \R.
$$
It is clear that $u_n$ is a solution of \eqref{eq:Problema-unidim} but posed in the interval $(-n,+\infty)$. 
Since the sequence $\{u_n\}$ is uniformly bounded, we can use interior regularity as in the proof of Theorem
\ref{theorem:MaximalSolR} 
to obtain local $C^\alpha$ bounds, which permit to conclude that, passing to a subsequence, $u_n\to v$ locally uniformly, 
where $v$ is a viscosity solution of 
$$
(-\Delta)^s v= f(v) \qquad \hbox{in } \R.
$$
On the other hand, by Lemma \ref{lema-monotonia}, the function $u$ is monotone. It follows that 
$v\equiv \rho$ in $\R$, and therefore $f(\rho)=0$. 

Finally, let us show that $F(s) <F(\rho)$ for $s\in [0,\rho)$, and the proof of the theorem will be 
concluded. Suppose this is not true. Then there exists a first point $\rho_0 \in (0,\rho)$ such that 
$$
F(\rho_0)=\max_{t\in [0,\rho]}F(t).
$$ 
Thus, in particular, $f(\rho_0)=0$ and \eqref{HF1} holds with $\rho_0$ in place of $\rho$. By Theorem 
\ref{theorem:MaximalSolR} we get a positive solution $v$ of \eqref{eq:Problema-unidim} which is increasing and 
verifies $\| v\|_{L^\infty(\R)}= \rho_0$.

Now we use Theorem \ref{theorem:SubSupV} and Remark \ref{rem-franja} in the Appendix with $v$ as a subsolution 
and $\rho$ as a supersolution and 
obtain a positive solution $w$ verifying $v\le w \le \rho$ in $\R$, which is maximal relative to $\rho$, 
and verifies in particular $\lim_{x\to +\infty} w(x)=\rho$.  
Using the Lipschitz condition on $f$ we have 
$$
(-\Delta)^s (w-v)\ge -L (w-v) \quad \hbox{in }\R_+,
$$
for some $L>0$. By Hopf's Lemma:
	$$
		\lim_{x\to 0^{+}}\frac{w(x)-v(x)}{x^s}>0.
	$$
But, on the other hand, by Theorem \ref{thm-pral-energia}
$$
\lim_{x\to 0^+} \frac{w(x)-v(x)}{x^s} =\frac{\sqrt{2}}{\Gamma(1+s)}(F(\rho)^\frac{1}{2}-F(\rho_0)^\frac{1}{2})
\le 0, 
$$
which is a contradiction. The claim follows.
\end{proof}

\bigskip

\begin{proof}[Proof of Proposition \ref{theorem:exist-half}]
The proof of this result is a consequence of a more general fact: if $v$ is a function defined in 
$\R$ and vanishing in $\R\setminus \R_+$ and we set $u(x)=v(x_N)$ for $x\in \R^N$, then 
$$
(-\Delta)^s u(x)= (-\Delta)^s v(x_N) \quad \text{ in }\R^N,
$$ 
where the first $s-$laplacian is meant to be in $\R^N$ and 
the second one in $\R$. 

To check this fact, observe that by its very definition and Fubini's theorem, we have 
for $x\in \R^N$:
$$
\begin{array}{rl}
(-\Delta)^s u(x) \hspace{-3mm} & = c(N,s)\displaystyle \int_{\R^N} \frac{v (x_N)- v (y_N)}{|x-y|^{N+2s}} dy\\[1pc]
& =c(N,s) \hspace{-1mm} \displaystyle \int_{-\infty}^{+\infty} \hspace{-1mm} (v(x_N)-v(y_N)) \hspace{-1mm} 
\int_{\R^{N-1}} \hspace{-1mm}
\frac{dy'}{(|x'-y'|^2+(x_N-y_N)^2)^\frac{N+2s}{2}} dy_N\\[1pc]
& = \ds c(N,s) \int_{\R^{N-1}} \frac{dz'}{(|z'|^2+1)^\frac{N+2s}{2}}  \int_{-\infty}^{+\infty} \frac{v(x_N)-v(y_N)}{|x_N-y_N|^{1+2s}} 
dy_N,
\end{array}
$$
where we have performed the change of variables $y'=x'+|x_N-y_N|z'$ in the integral taken in $\R^{N-1}$
in the second line above. Thus the proof of the theorem reduces to show that 
\begin{equation}\label{eq-igual-cte}
c(N,s) \int_{\R^{N-1}} \frac{dz'}{(|z'|^2+1)^\frac{N+2s}{2}} =c(1,s).
\end{equation}
With regard to the integral in \eqref{eq-igual-cte}, we have 
$$
\int_{\R^{N-1}} \frac{dz'}{(|z'|^2+1)^\frac{N+2s}{2}} = (N-1) \omega_{N-1} \int_0^{+\infty} 
\frac{r^{N-2}}{(r^2+1)^\frac{N+2s}{2}} dr,
$$
where we denote as usual by $\omega_{N-1}$ the measure of the unit ball in $\R^{N-1}$. 
In the last integral obtained, we perform the change of variables $r=\tan t$ to obtain
$$
\begin{array}{rl}
\ds \int_{\R^{N-1}} \frac{dz'}{(|z'|^2+1)^\frac{N+2s}{2}} \hspace{-2mm} & = (N-1) \omega_{N-1} 
\ds \int_0^\frac{\pi}{2} (\sin t)^{N-2} (\cos t)^{2s} dt\\
& \ds = \frac{(N-1) \omega_{N-1}}{2} B \left( \frac{N-1}{2},s+\frac{1}{2}\right)\\[1pc]
& \ds = \frac{(N-1) \omega_{N-1}}{2} \frac{\Gamma\left(\frac{N-1}{2}\right) \Gamma \left( s+\frac{1}{2} 
\right)}{\Gamma \left( s+\frac{N}{2}\right)},
\end{array}
$$
where $B(x,y)$ is the Beta function. Next, we use a well-known expression for $\omega_{N-1}$ (cf. for instance page 9 in \cite{MR1814364}) 
to obtain that 
\begin{equation}\label{eq-expr-I}
\int_{\R^{N-1}} \frac{dz'}{(|z'|^2+1)^\frac{N+2s}{2}} = \frac{\pi^\frac{N-1}{2} \Gamma\left( s+
\frac{1}{2}\right)}{\Gamma \left( s+\frac{N}{2}\right)}.
\end{equation}
Finally, with the use of \eqref{eq-const-norm} and \eqref{eq-expr-I} we see that 
$$
c(N,s) \int_{\R^{N-1}} \frac{dz'}{(|z'|^2+1)^\frac{N+2s}{2}} =4^s s(1-s)  
\frac{\pi^{-\frac{1}{2}} \Gamma\left( s+ \frac{1}{2}\right)}{\Gamma(2-s)} = c(1,s),
$$
as was to be shown. This concludes the proof of the theorem.
\end{proof}

\bigskip

\begin{proof}[Proof of Theorem \ref{theorem:main}]
	Since we have proved the uniqueness of solutions of $(P_1)$ with the same supremum $\rho$ (see Theorem \ref{theorem:unidim}) the proof of Theorem \ref{theorem:main} will follow by showing the existence of two one-dimensional 
	solutions $\underline{u}$, $\overline{u}$ of \eqref{eq:Problema}  
	verifying $\underline{u}\leq u\leq \overline{u}$ in $\R^N$ and 
	$$
	\lim_{x_N\to +\infty} \underline{u}(x)=\lim_{x_N\to +\infty} \overline{u}(x) =\rho.
	$$
	
	\medskip
	
	\noindent {\it Step 1.} There exists a one-dimensional solution $\overline u$ of \eqref{eq:Problema} 
	with $u \le \overline u \le \rho$.
	
	\smallskip
	
	Indeed, let $\overline{u}$ be the maximal solution of \eqref{eq:Problema} relative to $\rho$ given by Theorem 
	\ref{theorem:SubSupV} in the Appendix. Then by maximality it is clear that $\overline{u}$ is one-dimensional and 
	$u\le \overline{u}\le \rho$.
	
	Observe that this implies that $f$ verifies \eqref{HF1} by Theorem \ref{theorem:unidim}.
	
	\medskip
	
	\noindent {\it Step 2.} For every $R>0$ and every $\varepsilon>0$ small enough, there 
	exists $x_0\in \R^N_+$ such that $B_R(x_0)\subset \subset \R^N_+$ and $u\ge \rho-\varepsilon$ 
	in $B_R(x_0)$.
	
	\smallskip
	
	To prove this assertion take $\{x_n\}_{n\in\mathbb{N}}\subset \R^N_+$ such that 
	$u(x_n)\to\rho$ as $n\to +\infty$. We claim that $x_{n,N}\to +\infty$ (observe that if 
	$f\in C^1(\R)$, this would follow at once from the monotonicity of $u$ in the $x_N$ 
	direction given by Theorem 1 in \cite{MR3624935}). 
	
	Arguing as in the proof of Theorem \ref{theorem:SubSupV} in the Appendix, we obtain that 
	\begin{equation}\label{cota-con-phi}
	u(x) \le A \varphi(x)  \quad \hbox{in } \{x\in \R^N_+:\ 0<x_N<1\},
	\end{equation}
	where $A>0$ and $\varphi$ is given by \eqref{def-phi}. Since $\varphi=0$ on $\partial \R^N_+$, 
	this actually shows that $x_{n,N}$ is bounded away from zero, so extracting a subsequence 
	we may assume that either $x_{n,N}\to \mu$ for some $\mu>0$ or $x_{n,N}\to +\infty$. Define
	$$
	u_n(x)=u(x+x_n) \quad x\in \R^N.
	$$
	Proceeding as in previous situations, we obtain that, passing to a subsequence $u_n\to v$ 
	locally uniformly in $\R^N$, where $v$ is a solution of 
	$$
	(-\Delta)^s v= f(v) \qquad \hbox{in } D.
	$$
	Here $D=\{x\in \R^N:\ x_{n,N}>-\mu\}$ in case $x_{n,N}\to \mu$ or $D=\R^N$ when $x_{n,N}\to +\infty$. 
	In either case, and using that $f(\rho)=0$ and the Lipschitz condition on $f$, the strong 
	maximum principle implies $v\equiv \rho$. However, from \eqref{cota-con-phi} we have in the former 
	case
	$$
	v(x) \le A \varphi(x+\mu e_N) \quad \hbox{in } \{x\in \R^N_+:\ -\mu<x_N<0\}
	$$
	which would yield that $v$ vanishes on $\partial  D$, impossible. Hence the latter possibility holds 
	and this shows $x_{n,N}\to +\infty$ and $u(x+x_n) \to \rho$ locally uniformly in $\R^N$.
	
	Finally, let $\varepsilon>0$ and $R>0$ be arbitrary. We have $u(x+x_n)\ge \rho-\varepsilon$ in $B_R$ if $n$ is larger 
	than some $n_0=n_0(\varepsilon,R)$. Then $u(x) \ge \rho-\varepsilon$ in $B_R(x_n)$ for those values of $n$, as was 
	to be shown.

	\medskip
	
	\noindent {\it Step 3.} For every $\eta>0$, there exists $c(\eta)>0$ such that 
		\begin{equation}\label{sliding0}
		u(x)\geq c (\eta)
		\text{ when } x_N\geq \eta.
	\end{equation}
	
	\smallskip
	
	Indeed, let $\varepsilon>0$. When $f(0)<0$, choose a small positive $\delta$ such that 
	\eqref{fdelta} is verified, otherwise set $\delta=0$. Recall that by Step 1 $f$ verifies 
	\eqref{HF1}. Thus we may apply Lemma \ref{lemma:EBDM}: there exists $R_0=R_0(\varepsilon,\delta)$ such that 
	the maximal solution $u_{R_0,\delta}$ of \eqref{eq:PBD2} verifies 	
	\begin{equation}\label{Dud1}
		\|u_{R_0,\delta}\|_{\scriptstyle L^\infty(B_{R_0})}
		=\rho-\varepsilon.
	\end{equation}
	Let $x_0\in \R^N_+$ be given in Step 2 above, for these particular values of $\varepsilon$ and 
	$R_0$. Then by \eqref{Dud1}
		$$
		u_{R_0,\delta}(z-x_0) \le u(z), \qquad z\in B_{R_0}(x_0).
	  $$
	Since $u\ge 0$ in $\R^N$ and $u_{R_0,\delta}(\cdot-x_0)=-\delta \le 0$ outside 
	$B_{R_0}(x_0)$, we also have	
	\begin{equation}\label{Dud4}
		u_{R_0,\delta}(z-x_0)\leq u(z),\qquad  z\in\mathbb{R}^{N}.
	\end{equation}
	On the other hand, recall that by Lemma \ref{lemma:EBDM}, $u_{R_0,\delta}$ 
	is radially symmetric and radially decreasing. Hence there exists 
	$R_1\in(0,R_0]$ such that the set of points where $u_{R_0,\delta}>0$ is 
	precisely $B_{R_1}$. 
	Denote $\Theta_{R_1}\coloneqq \{x\in\mathbb{R}^{N}\colon x_{N} > R_1\}$, and 
	consider the set 
	\[
		\Omega_{R_1}\coloneqq\{x\in \Theta_{R_1}:\, u_{R_0,\delta}(z-x)<u(z),\, 
		z\in\mathbb{R}^N_+\}.
	\]
	It follows by \eqref{Dud4} and the strong maximum principle that $x_0\in \Omega_{R_1}$, hence this 
	set is nonempty. 
	We now claim that $\Omega_{R_1}$ is both open and closed relative to $\Theta_{R_1}$, 
	therefore 
	\begin{equation}\label{Trmp}
		\Omega_{R_1}=\Theta_{R_1}.
	\end{equation}
	Indeed it is clear from the continuity of all functions involved that $\Omega_{R_1}$ is open. 
	As for the closedness, if $\{\xi_n\} \subset \Omega_{R_1}$ verifies $\xi_n\to \xi\in \Theta_{R_1}$, then 
	$u_{R_0,\delta}(z-\xi_n)\le u(z)$ in $\R^N$, and by the strong maximum principle and the positivity of 
	$u$, this inequality is strict in $\R^N_+$, hence $\xi\in \Omega_{R_1}$.
	We deduce that \eqref{Dud4} holds for every $x$ with $x_N\ge R_1$.
	
	\smallskip
	
	Finally, let $\eta>0$ and take $0<\varepsilon < \min\{\eta,R_1\}$ fixed but arbitrary. 
	If $z\in \R^N_+$ is such that $z_N\ge \eta$,  
	it easily follows that $z\in B_{R_1-\varepsilon}(x_z)$, 
	where $x_z:=(z',R_1+z_N-\varepsilon)\in \Theta_{R_1}$.
	Therefore, by \eqref{Dud4} we see that 
	$$
	u(z)\ge  c(\eta)\coloneqq\inf
		\left\{ u_{R_0,\delta}(x)\colon x\in 
		B_{R_1-\varepsilon} \right\}>0,
	$$
	which concludes the proof of Step 3.
	
	\medskip
	
	\noindent {\it Step 4.} For every $M>2R_0$ and $\nu<M-2R_0$, there exists a maximal 
	solution $u_{\nu,M}$ of the problem 
		\begin{equation}\label{eq:prob-strip}
		\begin{cases}
			(-\Delta)^s u = f(u) 
			&\text{ in } \Sigma_{\nu,M}
			\coloneqq\{x\in\mathbb{R}^{N}\colon 
			\nu<x_{N}< M\} ,\\
			\ \	u=0 &\text{ in } \mathbb{R}^{N}\setminus 
				\Sigma_{\nu,M},
			\end{cases}
		\end{equation}
	relative to $u$, which only depends on $x_N$ and verifies $\| u_{\nu,M} \|_{L^\infty(\R^N)}\ge \rho-\varepsilon$.
	
	\smallskip
	
	Consider the maximal solution $\widetilde{u}_{R_0,\delta}$ of problem \eqref{eq:PBD2}. If we choose, say, 
	$x_0=(0,\frac{M}{2})$, then the function $\widetilde{u}_{R_0,\delta}(x-x_0)$ is a subsolution of 
	\begin{equation}\begin{cases}\label{eq:prob-strip-delta}
			(-\Delta)^s u = f_{\delta}(u) 
			&\text{ in } \Sigma_{\nu,M}
			\coloneqq\{x\in\mathbb{R}^{N}\colon 
			\nu<x_{N}< M\} ,\\
			\ \	u=0 &\text{ in } \mathbb{R}^{N}\setminus 
				\Sigma_{\nu,M},
			\end{cases}
			\end{equation}
	while $u$ is a supersolution, and they are ordered because of Step 3. The existence of a maximal solution $u_{\nu,M,\delta}$ 
	of \eqref{eq:prob-strip-delta} relative to $u$ then follows directly by Theorem \ref{theorem:SubSupV} 
	in the Appendix (cf. also Remark \ref{rem-franja}). 
	It is clear that $\| u_{\nu,M,\delta} \|_{L^\infty(\R^N)}\ge \rho-\varepsilon$. Proceeding as in the proof of 
	Theorem \ref{theorem:MaximalSolR}, passing to the limit when $\delta\to 0^+$, we get the existence of a maximal 
	solution $u_{\nu,M}$ of \eqref{eq:prob-strip}. 
	
	Thus only the one-dimensional symmetry of $u_{\nu,M}$ remains to be shown. For this aim 
	we will first show that for every unitary vector $\theta\in\mathbb{R}^{N-1}$ and $\lambda>0$ 
	\begin{equation}\label{sliding2}
		u_{\nu,M}(x'+\lambda \theta,x_N)\leq u(x)
		\quad x\in\mathbb{R}^{N}.
	\end{equation}
 	The proof of this statement is a consequence again of the sliding method. However, we 
	should warn that it is not completely standard since now we are sliding with solutions which do 
	not have a compact support as in most previous situations (see for instance \cite{MR1159383}).
 	
	Fix a unitary vector $\theta \in \R^{N-1}$. We will see that \eqref{sliding2} holds for small $\lambda$. 
	If it were not true, then there would 
 	exist sequences $\lambda_n\to 0^+$ and $\{x_n\}_{n\in\mathbb{N}}
 	\subseteq\Sigma_{\nu,M}$ such that
	\begin{equation}\label{sliding3}
		u_{\nu,M}(x_n'+\lambda_n \theta,x_{n,N})
		\geq u(x_n) \quad  n\in\mathbb{N}.
	\end{equation}
	We may assume with no loss of generality that $x_{0,N}\to x_0\in [\nu,M]$. 
	If we now define the translated functions
	\[
		u_{\nu,M,n}(x)\coloneqq u_{\nu,M}(x'+x_n', x_N), 
		\quad u_{n}(x)\coloneqq u(x'+x_n', x_N), 
		\quad x\in\mathbb{R}^{N},
	\]
	we can proceed similarly as in previous situations to obtain that, 
	up to extraction of a subsequence, $u_{\nu,M,n}\to U_{\nu,M}$ and $u_{n}  \to U$
	uniformly on compact sets of $\R^N$ as $n\to +\infty$ where
	$U_{\nu,M}$ and $U$ are solutions of \eqref{eq:prob-strip} and \eqref{eq:Problema}, respectively. 
	On the other hand since, by construction, $u_{\nu,M}(x)\leq u(x)$, 
	for any $x\in\mathbb{R}^N,$ we have   
	\[
		U_{\nu,M}(x)\leq U(x) \quad x\in \R^N.
	\]
	Then, by \eqref{sliding3} we deduce
	\begin{equation}\label{sliding4}
		U_{\nu,M}(0,x_0)= U(0,x_0).
	\end{equation} 
	Observe that, by \eqref{sliding0}, we have 
	\begin{equation}\label{contrad}
	\mbox{$U\ge c(\nu)>0$ on $\partial \Sigma_{\nu,M}$ 
	while $U_{\nu,M}=0$ there.}
	\end{equation}
	Therefore $(0,x_0) \in \Sigma_{\nu,M}$ and by 
	\eqref{sliding4} and the strong maximum principle, we can conclude that $U_{\nu,M}=U$ 
	in $\R^N$. However, this is impossible by \eqref{contrad}. 
	Therefore \eqref{sliding2} is true for small enough $\lambda>0$.
	
	Next, define
	\[
		\lambda^{*}\coloneqq
		\sup\{\mu>0\colon \eqref{sliding2} 
		\text{ holds for every } \lambda\in (0,\mu)\},
	\]
	and assume $\lambda^*<+\infty$. By continuity we have 
	$u_{\nu,M}(x'+\lambda^{*} \theta,x_N)\leq u(x)$ 
	for any $x\in\mathbb{R}^N$, and we reach a contradiction arguing exactly as before. 
	The contradiction shows that $\lambda^*=+\infty$, that is, \eqref{sliding2} holds 
	for every $\lambda>0$ and every unitary $\theta \in \R^{N-1}$.	
	
	Finally, since $u_{\nu,M}(x'+\lambda^{*} \theta,x_N)$ is a solution of problem 
	\eqref{eq:prob-strip} which lies below $u$, we see by maximality that 
	$$
	u_{\nu,M}(x'+\lambda^{*} \theta,x_N) \le u_{\nu,M}(x) \qquad x\in \R^N.
	$$
	Since $\lambda>0$ and $\theta\in \R^{N-1}$ are arbitrary, this shows that $u_{\nu,M}$ 
	depends only on $x_N$.
	
	\medskip
	
	\noindent {\it Step 5}. There exists a one dimensional solution $\underline{u}$ of 
	\eqref{eq:Problema} verifying $\| \underline{u}\|_{L^\infty(\R^N)}=\rho$ and 
	$\underline{u}\le u$ in $\R^N$.
	
	\smallskip
	
	By a similar argument as in Remark \ref{remark:Order}, we see that $u_{\nu,M}$ is decreasing in $\nu$ 
	and increasing in $M$. Proceeding as in the proof of Theorem \ref{theorem:MaximalSolR}, we see that 
		$$
		\underline{u}_\varepsilon(x)
		\coloneqq \lim_{\nu\to 0}\lim_{M\to +\infty}
		 u_{\nu,M}(x_N),\quad x\in\mathbb{R}^N
	  $$ 
	is a nonnegative one-dimensional solution of $\eqref{eq:Problema}$, which verifies  
	$\underline{u}_\varepsilon\le u$ in $\mathbb{R}^N$ and 
	$\| \underline{u}_\varepsilon\|_{L^\infty(\R^N)}\ge \rho-\varepsilon$. Moreover, it can be 
	checked that $\underline{u}_\varepsilon$ is increasing in $\varepsilon$ as $\varepsilon \to 0^+$. 
	Therefore
		$$
		\underline{u} \coloneqq \lim_{\varepsilon\to 0^+} \underline{u}_\varepsilon(x),
		\quad x\in\mathbb{R}^N
	  $$
		is a nonnegative one-dimensional solution of $\eqref{eq:Problema}$, which verifies  
	$\underline{u}\le u$ in $\mathbb{R}^N$ and 
	$\| \underline{u} \|_{L^\infty(\R^N)}= \rho$. 
	
	\medskip
	
	\noindent {\it Completion of the proof}. By Theorem \ref{theorem:unidim} we have that $\underline{u}= \overline{u}=
	u_{\rho}$.  Then by Theorem \ref{theorem:unidim} and Proposition \ref{theorem:exist-half} $u$ coincides with $u_{\rho}$. 
\end{proof}

\bigskip

\begin{proof}[Proof of Theorem \ref{theorem:valdinoci}]
The first step is to show that $\|u\|_{L^\infty(\R^N)} \le \rho$. Assume on the 
contrary that the set $D:=\{x\in \R^N_+:\ u(x)>\rho\}$ is nonempty. Then the function 
$v=\rho-u$ verifies
$$
\begin{cases}
(-\Delta)^s v=- f(u)\ge 0 &\text{ in } D,\\
\ \ v\ge 0 &\text{ in } \mathbb{R}^N\setminus D.
\end{cases}
$$
We can use Lemma 4 in \cite{MR3624935} to deduce that $v\ge 0$ in $D$, that is  
$u\le \rho$ in $D$, which is a contradiction ({notice that the requirement in \cite{MR3624935} 
that $D$ is connected}, which we can not ensure in our situation, is not really necessary). 
The contradiction shows that $u\le \rho$.

\medskip

The rest of the proof is entirely similar to that of Theorem \ref{theorem:main}. 
Indeed, the existence of a one-dimensional solution $\overline{u}$ of \eqref{eq:Problema} 
verifying $u\le \overline{u}$ in $\R^N$ follows exactly the same way. 

As for the existence of a one-dimensional solution $\underline{u}$ of \eqref{eq:Problema} 
verifying $\underline{u} \le u$ in $\R^N$, we notice that Step 2 is no longer needed and 
Step 3 can be directly proved with the use of the sliding method, as in 
\cite{MR1470317,2016arXiv160407755S}. Indeed we  claim that for every $\eta>0$ there exists $c(\eta)>0$ such that 
\begin{equation}\label{eq-sliding-2}
u(x) \ge c(\eta) \quad \hbox{if } x_N\ge \eta.
\end{equation}
To see this, we use hypothesis \eqref{hip-cero}: there exist $c,\nu>0$ such that 
$f(t)\ge ct$ if $t\in [0,\nu]$. Choose $R>0$ so that the first eigenvalue of $(-\Delta)^s$ in 
$B_R$ verifies $\lambda_1(B_R)\le c$, and let $\phi$ be an associated positive eigenfunction 
normalized by $\|\phi\|_{L^\infty(B_R)}=1$. 
Then it is clear that for every $x_0$ such that $x_{0,N}> R$ the function 
$$
\underline{u} (x)=\delta \phi(x-x_0), \quad x\in \R^N,
$$
is a subsolution of \eqref{eq:Problema} when $0< \delta \le \nu$. Moreover, if we fix such an 
$x_0$ it is possible to choose a small enough $\delta$ to have in addition $\underline{u}\le u$ 
in $\R^N$. Indeed, this inequality is trivially satisfied outside $B_R(x_0)$, while in 
$B_R(x_0)$ the inequality is also true for small $\delta$ because $u$ is bounded 
away from zero there.

We can now `slide' the ball around $\R^N_+$ just like in Step 3 in the proof of Theorem 
\ref{theorem:main} to obtain \eqref{eq-sliding-2}. Arguing as in Step 4 there, we can now construct 
a one-dimensional solution $\underline{u}$ of \eqref{eq:Problema} verifying $\underline{u}\le u$ 
in $\R^N$. Finally, observe that by Theorem \ref{theorem:unidim}, problem \eqref{eq:Problema} admits a unique 
one-dimensional solution given by $u_{\rho}$. Therefore, $u=u_{\rho}$, as we wanted to show. 
\end{proof}

\bigskip

The proof of our last result is just a direct consequence of Theorem \ref{theorem:valdinoci}.

\medskip

\begin{proof}[Proof of Corollary \ref{Liouville}]
Assume there exists a bounded, positive solution of \eqref{eq:Problema} and let $\rho_0:=\|u\|_{L^\infty(\R^N)}$. 
We choose $\rho>\rho_0$ and modify $f$ in the interval $(\rho_0,\rho)$ in such a way that 
$f$ remains positive in $(0,\rho)$ and $f(\rho)=0$. It is clear that \eqref{HF1} is verified 
for the value of $\rho$ so chosen. Hence, by Theorem \ref{theorem:valdinoci}, problem 
\eqref{eq:Problema} admits a unique solution $v$ which is one-dimensional and verifies 
$$
\lim_{x\to +\infty} v(x)=\rho.
$$
By uniqueness we should have $u\equiv v$ in $\R^N$, but this is impossible as $\| u\|_{L^\infty(\R^N)}=
\rho_0<\rho$. This contradiction shows that problem \eqref{eq:Problema} does not admit any 
bounded, positive solution, as we wanted to show.
\end{proof}

\bigskip

%%%%%%%%%%%%%%%%%%%%%%%%%%%%%%%%%%%%%%%%%%%%%%%%%%%%%%%%%%%%%%%%%%%%%%%%%
%Appendix
%%%%%%%%%%%%%%%%%%%%%%%%%%%%%%%%%%%%%%%%%%%%%%%%%%%%%%%%%%%%%%%%%%%%%%%%%

\appendix
\setcounter{equation}{0}

\renewcommand{\theequation}{A.\arabic{equation}}

\section{A solution between a sub and a supersolution}

In this Appendix we collect a couple of results which deal with the existence of 
maximal solutions for some problems related to the ones considered in the paper. 
To begin with, let $\Omega$ be a bounded domain and $f\colon\overline{\Omega}\times\mathbb{R}\to\mathbb{R}$ 
be a continuous function. We consider 
	\begin{equation}\label{eq:ENV}
		\begin{cases}
	 		(-\Delta)^s u=f(x,u) & \text{ in }\Omega,\\
			\ \ u=g					 & \text{ in }\mathbb{R}^N
			\setminus\Omega,
		\end{cases}
	\end{equation}
	where $g\in C(\R^N)$. 
	
	For convenience, we only deal in this Appendix with subsolutions, supersolutions and solutions 
	in the viscosity sense. In some cases, however, it is known that with some requirements on $f$ and $g$ 
	the concepts of viscosity and classical solutions of \eqref{eq:ENV} coincide (see 
	\cite{MR2270163,MR2494809,MR2781586}). 
	
	\medskip
	
	We say that a function $u\in C(\R^N)$ is a viscosity subsolution of \eqref{eq:ENV} if 
	$u\le g$ in $\R^N\setminus \Omega$ and verifies the following: for any $x_0\in\Omega$ and any
	function $\phi$ which is $C^2$ in a neighbourhood $U$ of $x_0$ and such that 
	$u(x_0)=\phi(x_0)$ and $u\le\phi$ in $U$ we have $(-\Delta)^s v(x_0)\le f(x_0,v(x_0))$, 
	where
	\[
		v(x)\coloneqq		
		\begin{cases}
			\phi(x) &\text{ if } x\in U,\\
			u(x)	 &\text{ if } x\in \mathbb{R}^N\setminus U.\\
		\end{cases}
	\]
	Supersolutions are defined by reversing the above inequalities. A function $u$ is 
	a viscosity solution of \eqref{eq:ENV} it it is both a viscosity sub and supersolution of \eqref{eq:ENV}.
	We remark that the continuity assumption on both the sub and supersolution can be relaxed to 
	an appropriate lower semicontinuity, but we are only interested in this work in continuous sub and 
	supersolutions.

	The existence of a solution between a sub and a supersolution is well-known in several instances, 
	mainly when an iteration procedure is available. However, that a maximal solution can be obtained in 
	general is perhaps less known, so we will include a sketch of the main proofs. Given a viscosity 
	supersolution $\overline{u}$ we say that $u$ is the maximal solution relative to $\overline{u}$ if 
	for every other viscosity solution $v$ of \eqref{eq:ENV} verifying $v\le \overline{u}$ in $\R^N$ 
	we have $v\le u$ in $\R^N$.
		
	Then we have: 
	
	\begin{theorem}\label{theorem:SubSup}
		Let $\Omega$ be a bounded Lipschitz domain satisfying the exterior sphere condition. Assume 
		$f:\overline{\Omega}\times \R\to \R$ is continuous and that $g\in C(\R^N)\cap L^\infty(\R^N)$. 
		If there exist viscosity sub and supersolution $\underline{u},\overline{u}\in C(\mathbb{R}^N)\cap L^\infty(\R^N)$ 
		of \eqref{eq:ENV} with $\underline{u}\le\overline{u}$ in $\mathbb{R}^N$, then there exists a 
		maximal viscosity solution $\widetilde{u}$ of \eqref{eq:ENV} relative to $\overline{u}$. 
	\end{theorem}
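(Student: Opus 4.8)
The plan is to construct $\widetilde u$ by Perron's method, taking as given the classical part of the statement, namely that \eqref{eq:ENV} possesses \emph{at least one} viscosity solution between $\underline u$ and $\overline u$; this is the usual sub/supersolution existence theorem and is obtained, e.g., by truncating $f$ outside the order interval $[\underline u,\overline u]$ (replacing $f(x,t)$ by $f(x,\operatorname{med}(\underline u(x),t,\overline u(x)))$), solving the resulting problem with bounded right-hand side by Schauder's fixed point theorem (the linear Dirichlet problem for $(-\Delta)^s$ being well posed under the exterior sphere condition, cf. \cite{MR3168912,MR3161511}), and recovering the two-sided bound $\underline u\le u\le\overline u$ from the defining inequalities together with the maximum principle. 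Let $\mathcal S$ denote the set of all viscosity solutions $v$ of \eqref{eq:ENV} with $\underline u\le v\le\overline u$ in $\R^N$, which is thus nonempty, and set
\[
\widetilde u:=\sup_{v\in\mathcal S}v.
\]
Everything reduces to proving that $\widetilde u\in\mathcal S$; maximality relative to $\overline u$ will then be almost immediate.

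The first point is that $\mathcal S$ is equicontinuous on $\R^N$. Every $v\in\mathcal S$ satisfies $|v|\le\Lambda:=\max\{\|\underline u\|_{L^\infty},\|\overline u\|_{L^\infty}\}$ and $|f(\cdot,v)|\le\sup\{|f(x,t)|:x\in\overline\Omega,\ |t|\le\Lambda\}$ in $\Omega$, so interior H\"older estimates (Theorem 12.1 in \cite{MR2494809}) give a uniform interior modulus of continuity, while the exterior sphere condition and the continuity of $g$ furnish uniform barriers at $\partial\Omega$, hence a uniform modulus of continuity up to the boundary (as in \cite{MR3168912,MR3161511}). It is here that the exterior sphere condition is used, and this is the step that substitutes for a comparison principle, which is unavailable since $f$ is only continuous. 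Consequently $\widetilde u\in C(\R^N)$, $\underline u\le\widetilde u\le\overline u$ and $\widetilde u=g$ in $\R^N\setminus\Omega$.

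Next I would run the standard Perron dichotomy. Since each $v\in\mathcal S$ is in particular a viscosity subsolution and $\widetilde u$ is their (continuous) supremum, $\widetilde u$ is a viscosity subsolution: if $\phi\in C^2$ touches $\widetilde u$ from above at $x_0\in\Omega$, pick $v_k\in\mathcal S$ and $x_k\to x_0$ with $v_k(x_k)\to\widetilde u(x_0)$ and $v_k-\phi$ attaining a local maximum at $x_k$, and pass to the limit in the subsolution inequality for $v_k$, using $v_k\le\widetilde u$ to control the nonlocal tails. To see $\widetilde u$ is a viscosity supersolution, suppose not: there are $x_0\in\Omega$ and $\phi\in C^2$ touching $\widetilde u$ from below at $x_0$ with $(-\Delta)^s w(x_0)<f(x_0,\widetilde u(x_0))$ strictly, where $w$ equals $\phi$ near $x_0$ and $\widetilde u$ elsewhere. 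First, $\widetilde u(x_0)<\overline u(x_0)$: otherwise $\phi$ touches $\overline u$ from below at $x_0$ as well, and since $\widetilde u\le\overline u$ the nonlocal tails give $(-\Delta)^s w(x_0)\ge f(x_0,\overline u(x_0))=f(x_0,\widetilde u(x_0))$, a contradiction. Now set $\hat v:=\max\{\widetilde u,\phi+\varepsilon\zeta\}$ with $\zeta\in C_c^\infty(B_r(x_0))$, $\zeta\ge0$, $\zeta(x_0)>0$. A careful but routine perturbation argument (choosing $r$ first to control the singular part of $(-\Delta)^s$, then $\varepsilon$, and invoking the continuity of $f$) shows that $\hat v$ is a viscosity subsolution, and for $\varepsilon$ small enough $\underline u\le\hat v\le\overline u$ (here $\widetilde u<\overline u$ near $x_0$ is used) and $\hat v=g$ in $\R^N\setminus\Omega$. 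Applying the plain existence result to the ordered pair $(\hat v,\overline u)$ produces $v'\in\mathcal S$ with $v'\ge\hat v$, so $v'(x_0)\ge\hat v(x_0)>\widetilde u(x_0)$, contradicting the definition of $\widetilde u$. Hence $\widetilde u$ is a viscosity solution, that is, $\widetilde u\in\mathcal S$.

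Finally, if $v$ is any viscosity solution of \eqref{eq:ENV} with $v\le\overline u$ in $\R^N$, then $\max\{v,\underline u\}$ is a viscosity subsolution lying in $[\underline u,\overline u]$ and equal to $g$ outside $\Omega$; applying the plain existence result to $(\max\{v,\underline u\},\overline u)$ yields an element of $\mathcal S$ dominating $v$, whence $v\le\widetilde u$. This proves Theorem \ref{theorem:SubSup}. The step I expect to be most delicate is the bump construction: producing a viscosity subsolution strictly larger than $\widetilde u$ at $x_0$ while remaining trapped between $\underline u$ and $\overline u$ and keeping the nonlocal operator under control, together with the equicontinuity up to $\partial\Omega$ in the second paragraph, which is precisely where the exterior sphere condition, in the absence of any comparison principle, does its work.
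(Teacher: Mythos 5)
Your argument is correct, and its skeleton largely coincides with the paper's: both take as given the plain existence of one solution between an ordered sub/supersolution pair, define $\widetilde u$ as the supremum of all solutions in $[\underline u,\overline u]$, use uniform H\"older estimates to get equicontinuity of that family (so that $\widetilde u$ is continuous and attains the exterior datum), invoke the standard fact that the supremum is a viscosity subsolution, and obtain maximality relative to $\overline u$ by taking the maximum of $\widetilde u$ with a competing subsolution and applying the existence result again (your final paragraph does this directly for an arbitrary solution $v\le\overline u$, which is if anything cleaner than the paper's ``independence of the subsolution'' remark). The genuine divergence is in how $\widetilde u$ is shown to solve the equation: you run the full Perron dichotomy, building a bump $\hat v=\max\{\widetilde u,\phi+\varepsilon\zeta\}$ and deriving a contradiction, which is the step you rightly flag as delicate in the nonlocal setting. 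The paper bypasses this entirely: since $\widetilde u$ is a continuous subsolution with $\widetilde u\le\overline u$, the plain existence theorem applied to the pair $(\widetilde u,\overline u)$ yields a solution $w$ with $\widetilde u\le w\le\overline u$, and $w\in\mathcal S$ forces $w=\widetilde u$ by the very definition of the supremum — so the supersolution property comes for free and the bump construction is superfluous. The other, more cosmetic, difference is the treatment of the exterior datum: the paper subtracts the $s$-harmonic function equal to $g$ outside $\Omega$, reducing to $g=0$ so that the global $C^s$ estimate of Proposition 1.1 in \cite{MR3168912} applies verbatim, whereas you keep $g$ and argue equicontinuity up to $\partial\Omega$ via barriers from the exterior sphere condition; both are legitimate, but the reduction to zero data lets one quote the boundary regularity off the shelf instead of re-running a barrier argument for merely continuous $g$.
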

	
	\begin{proof}[Sketch of proof]		
	Let us begin by observing that the problem can be reduced to $g=0$: if $w$ is the unique $s-$harmonic 
	function in $\Omega$ which coincides with $g$ outside $\Omega$ and we let $v=u-w$, then $v$ is a solution 
	of \eqref{eq:ENV} with right hand side $h(x,v)= f(x,v+w(x))$ and vanishing outside $\Omega$. Thus we 
	may assume in what follows that $g=0$.

	The existence of a solution in the interval $[\underline{u},\overline{u}]$ follows exactly as in 
	Theorem 1 in \cite{MR988383} (we notice that only regularity theory and the maximum principle are 
	needed; see also \cite{MR3393247}). Thus we only show the existence of a maximal solution in this 
	interval. We define the non-empty set
			\[
			\mathcal{F}\coloneqq\!\left\{
			u\in C(\mathbb{R}^N)\colon
			u \text{ is a viscosity solution of \eqref{eq:ENV} such that } \underline{u}\le u \le \overline{u} \text{ in } \R^N \right\}
		\]
		and 
		\[
			\widetilde{u}(x) \coloneqq\sup\{u(x)\colon u\in \mathcal{F}\}.
		\]
		We observe that for every $u\in \mathcal{F}$ we have $\|u\|_{L^\infty(\R^N)}\le C$, 
		$\|f(\cdot, u)\|_{L^\infty(\Omega)}\le C$, for some positive constant $C$ which does 
		not depend on $u$. Thus by regularity theory (cf. for instance Proposition 1.1 in 
		\cite{MR3168912}), we obtain 
		$$
		\| u \|_{C^s(\overline{\Omega})} \le C.
		$$
		This means that the set $\mathcal{F}$ is equicontinuous, thus $\widetilde{u}$ is 
		continuous in $\R^N$ and vanishes outside $\Omega$. Moreover, it is well-known that $\widetilde{u}$ 
		is a subsolution of \eqref{eq:ENV} in the viscosity sense. 
		
		Thus there exists a solution of \eqref{eq:ENV} in the interval $[\widetilde{u}, 
		\overline{u}]$. By its very definition it follows that this solution is indeed 
		$\widetilde{u}$, which is clearly the maximal solution in the interval 
		$[\underline{u},\overline{u}]$. 
		Let us mention in passing that the existence of the maximal solution could also be shown 
		by following the approach in \cite{MR2390510}.	
		
		We finally show that the maximal solution $\widetilde{u}$ just obtained 
does not depend on $\underline{u}$. Indeed, assume $\underline{u}_1$ and $\underline{u}_2$ are subsolutions of 
\eqref{eq:ENV} which lie below the supersolution $\overline{u}$ in $\R^N$. Let $\widetilde{u}_i$ 
be the maximal solution in the interval $[\underline{u}_i,\overline{u}]$, $i=1,2$ and set 
$\underline{u}^+= \max\{\widetilde{u}_1,\widetilde{u}_2\}$. 
Then $\underline{u}^+$ is a subsolution of \eqref{eq:ENV} below $\overline{u}$. Thus there exists 
a solution $w$ verifying $\underline{u}^+\le w \le \overline{u}$ in $\R^N$. In particular 
$\underline{u}_i\le \widetilde{u}_i \le w \le \overline{u}$ in $\R^N$, $i=1,2$ and by maximality 
of $\widetilde{u}_i$ we deduce $\widetilde{u}_1=\widetilde{u}_2=w$ in $\R^N$.
	\end{proof}
	
	\bigskip

	Theorem \ref{theorem:SubSup} can be generalized to deal with unbounded domains $\Omega$. Only some 
		minor points in the proof above need to be especially treated. For simplicity we will restrict 
		our attention next to the case $\Omega=\R^N_+$ and $f$ not depending on $x$, which is the main concern 
		in this paper:
		\begin{equation}\label{eq:ENV-semiesp}
		\begin{cases}
	 		(-\Delta)^s u=f(u) & \text{ in }\R^N_+,\\
			\ \ u=0					 & \text{ in }\mathbb{R}^N\setminus\R^N_+.
		\end{cases}
	\end{equation}
	We have also set $g=0$. 
	In this context, we have a result which is completely analogue to Theorem \ref{theorem:SubSup}.
		
	\begin{theorem}\label{theorem:SubSupV}
		Assume $f:\R \to \R$ is continuous and there exist viscosity 
		sub and supersolution $\underline{u},\overline{u}\in C(\mathbb{R}^N)\cap L^\infty(\R^N)$ 
		of \eqref{eq:ENV-semiesp} with $\underline{u}\le \overline{u}$ in $\R^N$. 
		Then there exists a maximal viscosity solution of \eqref{eq:ENV-semiesp} relative to $\overline{u}$.
	\end{theorem}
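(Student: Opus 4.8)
The plan is to recover \eqref{eq:ENV-semiesp} as a limit of problems posed in bounded domains, for which Theorem~\ref{theorem:SubSup} already supplies maximal solutions, and then to pass to the limit. Set $M:=\max\{\|\underline u\|_{L^\infty(\R^N)},\|\overline u\|_{L^\infty(\R^N)}\}$ and $C:=\max_{[-M,M]}|f|$, and fix once and for all the bounded barrier
\begin{equation}\label{def-phi}
\varphi(x):=\min\bigl\{\bigl((x_N)_+\bigr)^{s},\,1\bigr\},\qquad x\in\R^N,
\end{equation}
which vanishes outside $\R^N_+$, is positive in $\R^N_+$, and, after a harmless modification near $\{x_N=1\}$, satisfies $(-\Delta)^s\varphi\ge 1$ in the strip $\{0<x_N<1\}$. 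I would then fix an exhausting family of bounded Lipschitz domains satisfying the exterior sphere condition, say the half-balls $\Omega_n:=B_n\cap\R^N_+$, so that $\Omega_n\subset\Omega_{n+1}$ and $\bigcup_n\Omega_n=\R^N_+$.

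On each $\Omega_n$ I consider the truncated problem with right-hand side $f(u)$ in $\Omega_n$ and a continuous exterior datum $g_n\in C(\R^N)\cap L^\infty(\R^N)$ that equals $0$ on $\R^N\setminus\R^N_+$, agrees with $\overline u$ on the part of $\R^N_+\setminus\Omega_n$ away from $\partial\R^N_+$, and is glued in the intervening layer (using a multiple of $\varphi$) so as to vanish on $\partial\R^N_+$ while still satisfying $\underline u\le g_n\le\overline u$ on $\R^N\setminus\Omega_n$. With this choice $\underline u$ is a viscosity subsolution and $\overline u$ a viscosity supersolution of the truncated problem, with $\underline u\le\overline u$, so Theorem~\ref{theorem:SubSup} provides a maximal viscosity solution $u_n$ relative to $\overline u$, with $\underline u\le u_n\le\overline u$ in $\R^N$.

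Next comes the passage to the limit. Since $\underline u\le u_n\le\overline u$, the sequences $\{u_n\}$ and $\{f(u_n)\}$ are bounded by $M$ and $C$ respectively, uniformly in $n$, so interior regularity for the fractional Laplacian (Proposition~1.1 in \cite{MR3168912}, or Theorem~12.1 in \cite{MR2494809}) yields uniform $C^{s}$ bounds on compact subsets of $\R^N_+$. By Arzel\`{a}--Ascoli a subsequence converges locally uniformly in $\R^N_+$ to some $u$ with $\underline u\le u\le\overline u$, which by stability of viscosity solutions (Lemma~4.7 in \cite{MR2494809}) solves $(-\Delta)^s u=f(u)$ in $\R^N_+$; moreover $u=0$ in $\R^N\setminus\R^N_+$, since there $u_n=g_n\to 0$ pointwise. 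The delicate point, and the one I expect to be the main obstacle, is the behaviour of $u$ at $\partial\R^N_+$, about which the interior estimates say nothing. Here I would argue by comparison with \eqref{def-phi}: because $\|f(u_n)\|_{L^\infty(\Omega_n)}\le C$ and $|u_n|\le M$ uniformly in $n$, the function $A\varphi\mp u_n$ is a supersolution in $\{0<x_N<1\}$ for a suitable $A=A(C,M,s)$, and comparison on the bounded sets $\Omega_n$ gives $|u_n|\le A\varphi$ there. Letting $n\to\infty$, $|u|\le A\varphi$ near $\partial\R^N_+$, so $u$ extends continuously by $0$ across $\partial\R^N_+$; hence $u\in C(\R^N)\cap L^\infty(\R^N)$ is a viscosity solution of \eqref{eq:ENV-semiesp} with $\underline u\le u\le\overline u$. (The same barrier argument also records the boundary regularity of $u$ used in Section~\ref{sec-main}.)

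It remains to verify maximality. Let $v$ be any viscosity solution of \eqref{eq:ENV-semiesp} with $v\le\overline u$ in $\R^N$. The barrier estimate applies to $v$ as well, so $v\le A\varphi$ near $\partial\R^N_+$; combining this with $v\le\overline u$ and $v=0$ outside $\R^N_+$, one checks that $v\le g_n$ on $\R^N\setminus\Omega_n$ for every $n$ (this is precisely the step requiring extra care, since the competitor $v$ no longer has compact support, in the spirit of the remark in Section~\ref{sec-main}). As $v$ also solves the equation in $\Omega_n\subset\R^N_+$, it is a viscosity subsolution of the $n$-th truncated problem; Theorem~\ref{theorem:SubSup} then furnishes a solution of that problem between $v$ and $\overline u$, which lies below $u_n$ because $u_n$ is maximal relative to $\overline u$. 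Hence $v\le u_n$ for all $n$, and passing to the limit along the chosen subsequence gives $v\le u$. Thus $u$ is a maximal viscosity solution of \eqref{eq:ENV-semiesp} relative to $\overline u$; arguing as in the last paragraph of the proof of Theorem~\ref{theorem:SubSup}, this maximal solution does not depend on $\underline u$. This completes the proof.
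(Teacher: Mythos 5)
Your proposal is correct and follows essentially the same route as the paper's proof: approximate by problems in half-balls where Theorem~\ref{theorem:SubSup} applies, combine uniform interior $C^s$ estimates and stability of viscosity solutions with a boundary barrier comparable to $x_N^s$ (the paper uses the solution of $(-\Delta)^s\varphi=1$ in the unit strip, you use $\min\{((x_N)_+)^s,1\}$, whose fractional Laplacian in the strip is only bounded below by a positive constant $\kappa(s)$ rather than by $1$ --- harmless, since you work with a large multiple $A\varphi$ anyway), pass to the limit, and obtain maximality as in Theorem~\ref{theorem:SubSup}, including the independence of the maximal solution from the subsolution. The one assertion you leave implicit --- that the glued exterior data satisfy $\underline{u}\le g_n\le\overline{u}$, which requires the barrier comparison $\underline{u}\le A\varphi$ near $\partial\R^N_+$ (and the analogous bound for competitors $v$ in the maximality step) --- is exactly at the level of detail of the paper's own unproved claim that $\max\{\underline{u},-c\varphi\}\le\min\{\overline{u},c\varphi\}$, so it is a sketch-level omission rather than a gap.
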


\begin{proof}[Sketch of proof]
First of all we truncate $f$ outside $[\inf\underline{u},\sup \overline{u}]$ to make it bounded.
Let $\varphi$ be the solution of the one-dimensional problem
\begin{equation}\label{def-phi}
	\begin{cases}
	 		(-\Delta)^s \varphi=1 & \text{ in }(0,1) ,\\
			\ \ \varphi=0					 & \text{ in } (-\infty,0),\\
			\ \ \varphi=1					 & \text{ in } (1,+\infty).\\
		\end{cases}
	\end{equation}
	Then $\varphi$ solves the same problem in $\Sigma_1=\{x\in \R^N_+:\ 0<x_N<1\}$ (cf. the proof of 
	Proposition \ref{theorem:exist-half} in Section \ref{sec-main}). We notice that for large enough $c>0$ 
	the function $-c\varphi$ (resp. $c\varphi$) is a subsolution (resp. supersolution) of \eqref{eq:ENV-semiesp}. Therefore $\underline{v}:=\max\, \{\underline{u}, -c\varphi\}$ and $\overline{v}:=\min\, \{\overline{u}, c \varphi\}$ are well ordered sub and supersolution of \ref{eq:ENV-semiesp} satisfying $\underline{v}=\overline{v}=0$ in $\mathbb{R}^N\setminus\R^N_+$.

We choose now any smooth function $w$ defined in $\R^N$ and verifying $w=0$ in $\R^N\setminus \R^N_+$, 
$\underline{v}\le w \le \overline{v}$ in $\R^N$. For $R>0$, let $B_R^+ =\{x\in \R^N_+:\ |x|<R\}$ 
and consider the problem 
	\begin{equation}\label{eq:truncada}
		\begin{cases}
	 		(-\Delta)^s u=f(x,u) & \text{ in }B_R^+ ,\\
			\ \ u=w					 & \text{ in }\mathbb{R}^N
			\setminus B_R^+.
		\end{cases}
	\end{equation}
	By Theorem \ref{theorem:SubSup}, there exists a solution $u_R$ of \eqref{eq:truncada} verifying 
	$\underline{v}\le u_R \le \overline{v}$ in $\R^N$. Moreover, 
	the family $\{u_R\}_{R>0}$ is uniformly bounded and by standard interior regularity we also have
	$$
	\| u_R\|_{C^s(K)}\le C,
	$$ 
	for every compact set $K\subset \R^N_+$. Thus $\{u_R\}_{R>0}$ is also equicontinuous and we can 
	select a sequence $R_n\to +\infty$ such that $u_{R_n}\to v$ locally uniformly in $\R^N_+$ for some 
	function $v\in C(\R^N)$ which verifies $\underline{v}\le v \le \overline{v}$ in $\R^N$, therefore 
	vanishes in $\R^N\setminus \R^N _+$. 
	
	Passing to the limit in \eqref{eq:truncada} we obtain that $v$ is a solution of \eqref{eq:ENV-semiesp} 
	which verifies $\underline{u}\le v\le \overline{u}$ in $\R^N$. The existence of a maximal 
	solution relative to $\overline{u}$ is shown exactly as in the proof of Theorem \ref{theorem:SubSup}, with 
	the only prevention that the barrier $\varphi$ constructed above has to be used instead of 
	the boundary regularity for bounded domains.
\end{proof}

\bigskip	

\begin{remark}\label{rem-franja}{\rm \ 
(a) Of course the same result is true when $N=1$, in particular for problem 
\eqref{eq:Problema-unidim}.

\smallskip

(b) With a minor variation in the proof of Theorem \ref{theorem:SubSupV} it can be seen that the 
same statements hold when problem \eqref{eq:ENV-semiesp} is posed in a strip 
$\Sigma_{\nu,M}\coloneqq\{x\in\mathbb{R}^{N}\colon \nu<x_{N}< M\}$, where $M>\nu>0$.
}\end{remark}

%%%%%%%%%%%%%%%%%%%%%%%%%%%%%%%%%%%%%%%%%%%%%%%%%%%%%%%%%%%%%%%%%%%%%%%%%
%Acknowledgements
%%%%%%%%%%%%%%%%%%%%%%%%%%%%%%%%%%%%%%%%%%%%%%%%%%%%%%%%%%%%%%%%%%%%%%%%%
\noindent {\bf Acknowledgements.} 
	All authors were partially supported by Ministerio de Eco\-no\-m\'ia y 
	Competitividad under grant MTM2014-52822-P (Spain). 
	B. B. was partially supported by a MEC-Juan de la Cierva postdoctoral 
	fellowship number  FJCI-2014-20504 (Spain). 
	L. D. P. was partially supported by PICT2012 0153 from ANPCyT 
	(Argentina).
	A. Q. was partially supported by Fondecyt Grant No. 1151180 Programa Basal, 
	CMM. U. de Chile and Millennium Nucleus Center for Analysis of 
	PDE NC130017. 
	%B. B.  and J. G-M. would like to thank the Mathematics 
	%Department of Universidad T\'ecnica 
	%Federico Santa Mar\'ia where part 
	%of this work has been done for its kind hospitality.

%%%%%%%%%%%%%%%%%%%%%%%%%%%%%%%%%%%%%%%%%%%%%%%%%%%%%%%%%%%%%%%%%%%%%%%%%
%Bibliography
%%%%%%%%%%%%%%%%%%%%%%%%%%%%%%%%%%%%%%%%%%%%%%%%%%%%%%%%%%%%%%%%%%%%%%%%%

\bibliographystyle{abbrv}
%Others Bibtex styles: abbrv, Nabbrv, abstract, acm, agsm, alpha, nalpha,
%Nalpha, authordate1, authordate2, authordate3, authordate4, amsalpha,
%amsplain, annotate, annotation, apa, apalike, apalike2, apasoft

\bibliography{Bibliografia}

\end{document}